\DeclareMathOperator{\Int}{Int}
\DeclareMathOperator{\length}{length}
\DeclareMathOperator{\Ker}{Ker}
\DeclareMathOperator{\Fix}{Fix}
\DeclareMathOperator{\Bil}{Bil}
\DeclareMathOperator{\Isom}{Isom}
\DeclareMathOperator{\arcsinh}{arcsinh}
\newtheorem{prop}{Proposition}[section]
\newtheorem{theorem}[prop]{Theorem}
\newtheorem{lem}[prop]{Lemma}
\newtheorem{cor}[prop]{Corollary}
\theoremstyle{definition}
\newtheorem{definition}[prop]{Definition}
\newtheorem{rem}[prop]{Remark}
\newtheorem{example}[prop]{Example}
\newtheorem*{rem*}{Remark}
\newtheorem*{notation*}{Notation}
\newtheorem{defi}[prop]{Definition}
\numberwithin{equation}{section}
\newcommand{\bR}{\mathbb{R}}
\newcommand{\ra}{\rightarrow}
\newcommand{\cB}{\mathcal{B}}
\newcommand{\End}{\mathrm{End}}
\DeclareMathOperator{\id}{id}
\DeclareMathOperator{\round}{round}
\newcommand{\Hom}{\mathrm{Hom}}
\newcommand{\TPf}{\mathrm{TPf}}
\newcommand{\II}{\mathrm{II}}
\newcommand{\Pf}{\mathrm{Pf}}
\newcommand{\oPf}{\Pf^{\mathrm{odd}}}
\newcommand{\pM}{\partialM }
\newcommand{\scal}{\mathrm{scal}}
\newcommand{\vol}{\mathrm{vol}}
\newcommand{\HN }{H\! N}
\newcommand{\VN }{V\! N}
\newcommand{\HP }{H\! P}
\newcommand{\VP }{V\! P}
\newcommand{\nuB}{\nu\! B }
\newcommand{\partialM}{\partial\! M }
\begin{document}

\title{Odd Pfaffian forms}
\author{Daniel Cibotaru}
\thanks{Partially supported by the CNPq Universal Project.}
\address{Universidade Federal do Cear\'a, Fortaleza, CE, Brazil}
\email{daniel@mat.ufc.br} 

\author{Sergiu Moroianu}
\thanks{Partially supported by the CNCS project PN-III-P4-ID-PCE-2016-0330.}
\address{Institutul de Matematic\u{a} al Academiei Rom\^{a}ne\\
P.O. Box 1-764\\ 
RO-014700 Bucharest\\
Romania}
\email{moroianu@alum.mit.edu}
\date{\today}

\begin{abstract} On any odd-dimensional oriented Riemannian manifold we define a
volume form called the \emph{odd Pfaffian} through a certain 
invariant polynomial with integral coefficients in the curvature tensor.
We prove an intrinsic Chern-Gauss-Bonnet formula for incomplete edge
singularities in terms of the odd Pfaffian on the fibers of the boundary fibration.
The formula holds for product-type model edge metrics where the degeneration 
is of conical type in each fiber, but also for  perturbations of second order 
of the model metrics. The same method produces a Chern-Gauss-Bonnet formula 
for complete, non-compact manifolds with fibered boundaries in the sense 
of Mazzeo-Melrose and perturbations thereof, this time involving the 
odd Pfaffian of the base (rather than the fiber) of the fibration. 
We deduce the rationality of the usual Pfaffian form on Riemannian orbifolds, 
and exhibit obstructions for certain metrics on a fibration to be realized 
as the model at infinity of a flat metric with conical, edge or fibered 
boundary singularities.
\end{abstract}
\subjclass[2010]{Primary 58A10, 53C05; Secondary 57R18.} 
\maketitle


\section{Introduction}

Gauss-Bonnet formulas in singular geometric contexts abound in
mathematical literature, we mention here for instance \cite{A,AW, BrK,
CMSI,CMSII, DW,Du,Fu, McM, Ra, Ro, W}.
With a few notable exceptions, most of those theorems treat the case of
singular sets embedded in a smooth Riemannian manifold $M$, typically $M=\bR^n$,
since by the Nash embedding theorem all Riemannian manifolds are isometrically
embeddable in some euclidean space. In this article 
we look at a different type of degeneration, for which the techniques of the
"embedded" situation do not apply. Namely, we consider a compact differentiable
manifold $M$ with boundary, endowed with a Riemannian metric which is smooth in
the interior and degenerates at the boundary following certain precise patterns.
Examples of such degenerate metrics include the so called \emph{incomplete edge
metrics}, for instance any Riemannian metric in the complement of a submanifold, 
and also the \emph{fibered boundary metrics}, a class of complete metrics
including the generalized Taub-NUT metrics on $\bR^4$. 

\subsection*{Double forms and the odd Pfaffian}
We set the stage with our own algebraic treatment of the Gauss-Bonnet formula on
compact oriented manifolds $(M^{2k},g)$ using the formalism of double forms:
\begin{align*}(2\pi)^k\chi(M)=\int_M\Pf(g),&&\Pf(g)=\frac{1}{k!}
\cB_g\left((R^g)^k \right).
\end{align*}
Here $R^g\in\Lambda^2(M)\otimes\Lambda^2(M)$ is the curvature
form of $g$, a double form of bi-degree $(2,2)$, and $\cB_g$ is the Berezin
integral, or contraction with the volume form of $g$ in the second component. 
When $M$ has a nonempty boundary $(N,h)$, essentially as a consequence 
of the second Bianchi identity we isolate a correction term 
when the metric is not of product-type near the boundary:
\begin{align}\label{eq-4}
(2\pi)^k\chi(M)=\int_M\Pf(g)-
\sum_{j=0}^{k-1}
\frac{(-1)^{k+j}(2k-2j-3){!}{!}}{j!(2k-2j-1)!}
 \int_{\partialM }\cB_h\left((R^h)^{j}\wedge \II^{2k-2j-1} \right).
\end{align}
In this formula $\II\in\Lambda^1(N)\otimes\Lambda^1(N)$ is the second
fundamental
form of the boundary, a double form of bi-degree $(1,1)$, $\cB_h$ is the Berezin
integral with respect to $h$, and
\begin{align*}
(-1){!}{!}:=1,&&
(2n-1){!}{!}:=1\cdot 3\cdot\ldots\cdot(2n-1)\text{ for $n\geq 1$}.
\end{align*}
Of course, in coordinates this coincides with the correction term 
of the original formul{\ae} of \linebreak Allendoerfer-Weil \cite{AW} and Chern
\cite{Ch1,Ch2}.
This compact algebraic way of writing the Gauss-Bonnet
integrand on the boundary is well-suited for generalizations.

Motivated by \eqref{eq-4},
we define the \emph{odd Pfaffian form} of a $2k-1$-dimensional Riemannian
manifold $(N,h)$
in terms of the curvature form $R^h\in\Lambda^2\otimes\Lambda^2$ and the metric
tensor $h\in\Lambda^1\otimes\Lambda^1$. 
\begin{defi}
For every oriented $2k-1$-dimensional Riemannian manifold $(N,h)$ define
\[\oPf(h):=\sum_{j=0}^{k-1} (-1)^{k+j}(2k-2j-3){!}{!}\cB_h \left(
\frac{(R^h)^j\wedge h^{2k-1-2j}}{j!(2k-2j-1)!}\right)\in\Lambda^{2k-1}(N).
\]
\end{defi}
In any orthonormal frame, $\oPf$ is a polynomial with integral coefficients in
the entries of the curvature form $R$. Up to a constant, 
this form appears already, in a different presentation, in the work of Albin
\cite[Eq.\ (7.12)]{A} as the boundary correction term in the Gauss-Bonnet
formula for scattering metrics. It consists of a linear combination
with integral coefficients of the  Lipschitz-Killing curvatures (Definition
\ref{LKcu}). As explained in Section \ref{conman}, the odd Pfaffian  is in fact
the transgression of the Pfaffian for any slice $\{r\}\times N$
on the cone $(-\epsilon,0)\times N$ with the metric $dr^2\oplus r^2 h$.

\subsection*{Edge singularities}
The first type of metrics analyzed here are the incomplete edge metrics. This
means 
we have an (oriented) compact manifold with
boundary $M$ together with a fibration structure of the boundary $\pi:\partialM
\to B$ over a compact manifold $B$.
Fix a boundary-defining function $r$ for the boundary.
The (singular) metric in a collar neighborhood  of 
$\partialM =\{r=0\}$ has the form
\begin{align}\label{eq-3} 
g=dr^2\oplus g(r),&&g(r)=r^2g^{V}\oplus \pi^*g^B
\end{align}
where $g^B$ is a metric on $B$, $g^V$ is a Riemannian metric on the fibers and
the splitting is induced by an Ehresmann connection. Even in this first analysis
we allow $g^V$  to vary with $r$ but still converging to some true
metric at $r=0$.

We prove that a Gauss-Bonnet formula holds on such manifolds and we
compute the contribution of the singular locus $\partialM $ in terms
of the geometric data, essentially the Pfaffian of the base and the odd Pfaffian
of the fibers. 
Due to its importance in geometric applications, we review the (perturbed)
conical case separately 
(see Theorem \ref{Tconca}). 

\begin{theorem}\label{Theorem1} Let $(M^{2k},g)$ be a manifold with edge
singularities with $g$ as in \eqref{eq-3}. 
\begin{itemize}
\item[(a)] If $\dim(B)$ is odd,
\[ \chi(M)=\frac{1}{(2\pi)^k}\int_M\Pf^g.
\]
\item[(b)] If $\dim(B)$ is even,
\[
(2\pi)^k
\chi(M)=\int_{M}\Pf^g
-\int_B\left(
{\Pf(g^B)}\int_{\partialM /B}\oPf(g^V)\right).\]
\end{itemize}
\end{theorem}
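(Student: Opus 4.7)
The strategy is to apply the smooth Gauss-Bonnet formula \eqref{eq-4} to the truncated manifold $M_\epsilon:=\{r\ge\epsilon\}$ and pass to the limit $\epsilon\to 0$. Since $M$ retracts along the collar onto $M_\epsilon$, one has $\chi(M_\epsilon)=\chi(M)$, and \eqref{eq-4} reads
\begin{equation*}
(2\pi)^k\chi(M)=\int_{M_\epsilon}\Pf(g)\;-\;\sum_{j=0}^{k-1}c_{k,j}\int_{\{r=\epsilon\}}\cB_{h_\epsilon}\!\left((R^{h_\epsilon})^j\wedge\II_\epsilon^{2k-2j-1}\right),
\end{equation*}
with $c_{k,j}:=\tfrac{(-1)^{k+j}(2k-2j-3)!!}{j!(2k-2j-1)!}$, $h_\epsilon:=g(\epsilon)$, and $\II_\epsilon$ the second fundamental form of $\{r=\epsilon\}\subset M_\epsilon$. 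A preliminary scaling argument near the edge shows $\Pf(g)\in L^1(M)$, so the interior term converges to $\int_M\Pf(g)$; the entire task is to analyze the boundary sum.

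Work in a frame adapted to $TN=V\oplus\pi^*TB$, obtained by rescaling a $g^V$-orthonormal vertical frame by $\epsilon^{-1}$ and adjoining a $g^B$-orthonormal horizontal frame; this is $h_\epsilon$-orthonormal. In such a frame $\II_\epsilon=\epsilon^{-1}\id_V$ is purely vertical, of bi-degree $(1,1)$. The curvature $R^{h_\epsilon}$ decomposes into a vertical-vertical block, scaling as $\epsilon^{-2}$ and converging after rescaling to a bounded tensor built from $R^{g^V}$ and $g^V$ via the cone Gauss equation; a horizontal-horizontal block converging to $\pi^*R^{g^B}$; and mixed blocks governed by the O'Neill $A$ and $T$ tensors of the submersion, which carry positive powers of $\epsilon$ in this frame. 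Combined with the $\epsilon^f$-factor of the fiber volume ($f:=2k-1-\dim B$), a direct exponent count shows every boundary integrand is $O(1)$ and admits a finite limit.

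Now decompose each integrand by bi-degree along $V\oplus\pi^*TB$ and use the multiplicativity of the Berezin integral on the product structure of the fiber volume. Since $\II_\epsilon$ is purely vertical and the mixed blocks of $R^{h_\epsilon}$ are subleading, the surviving contributions come from factorizations of $(R^{h_\epsilon})^j$ into $j_V$ pure vertical-vertical and $j_H$ pure horizontal-horizontal factors with $2j_V=2j-b$ and $2j_H=b$, where $b:=\dim B$. For $b$ odd no such integer splitting is possible, so the boundary sum vanishes in the limit, establishing part (a). For $b$ even, the horizontal Berezin assembles the horizontal factors into $\Pf(g^B)$, while the fiber Berezin processes $j-b/2$ copies of the limiting vertical curvature together with $2k-2j-1$ copies of $\II_\epsilon$; after the reindexing $\ell=k-1-j+b/2$, a term-by-term comparison identifies the vertical contribution as the $\ell$-th summand in Definition of $\oPf(g^V)$, and the prefactors $c_{k,j}$ match the corresponding signed double-factorial coefficients. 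Fubini along $\pi$ then assembles the limit of the boundary sum into $\int_B\Pf(g^B)\int_{\partial\!M /B}\oPf(g^V)$, yielding part (b).

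The principal obstacle is the algebraic identification in part (b): one must extract the leading vertical-vertical block of $R^{h_\epsilon}$ from the cone Gauss equation, which produces both an $R^{g^V}$-piece and a $g^V\wedge g^V$-piece with determined coefficients; verify that the O'Neill corrections and the $r$-derivatives of the family $g^V(r)$ contribute only subleading terms in $\epsilon$; and check that the constants and Berezin combinatorics conspire to reproduce the right-hand side of the formula without spurious factors. The case of $g^V$ depending on $r$ is handled by observing that only the value $g^V(0)$ affects the leading term, while all $r$-derivatives enter with strictly positive powers of $\epsilon$.
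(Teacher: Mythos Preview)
Your proposal is correct and follows essentially the same route as the paper: apply the boundary Gauss--Bonnet formula to the truncation $M_\epsilon$, decompose the slice curvature $R^{h_\epsilon}$ and $\II_\epsilon$ according to the vertical/horizontal splitting (the paper does this in double-form language via Proposition~\ref{Le2}, you via O'Neill's tensors in an adapted orthonormal frame), and count powers of $\epsilon$ to see that only the term with exactly $b/2$ horizontal curvature factors survives. One minor imprecision worth correcting: the leading vertical--vertical block of $R^{h_\epsilon}$ is simply $\epsilon^{-2}R^{g^V}$ plus O'Neill corrections---no ``cone Gauss equation'' enters, since $R^{h_\epsilon}$ is already the intrinsic curvature of the slice; the $g^V$-factors in $\oPf(g^V)$ come from $\II_\epsilon$, not from the curvature block.
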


When we allow horizontal variations of the metric, i.e., $g^B$ varies with $r$,
we obtain certain additional
terms (see Theorem \ref{olst}).

The computation is based on two observations. First, the second fundamental
form of a slice is the Lie derivative of the metric in the direction of
the normal geodesic flow $\partial_r$. Secondly, we describe 
explicitly the decomposition of
the curvature form of a Riemannian submersion into its horizontal, mixed and
vertical components with respect to the second variable when seen as a double
form.

\subsection*{Manifolds with fibered boundaries}
The same method used for edge metrics leads to a
Gauss-Bonnet formula for a different type of degeneracy.
Following Mazzeo and Melrose \cite{MaMe3}, 
a  non-compact Riemannian manifold $(M,g)$ is called with \emph{fibered
boundary} 
if it has  a finite number of
ends which are modeled on $(1,\infty)\times N$ with the metric
\[ g:= dr^2\oplus g^V\oplus r^2\pi^*g^B
\]
for $r\gg 1$.
We assume here that $N\ra B$ is a fiber bundle with a fixed Ehresmann
connection with respect to which the extension of $g^V$ to $N$ is defined. It is
not hard to see that 
such a metric is complete. (These metrics were studied in depth by Vaillant in
\cite{Va} 
under the name $\phi$-metrics.) Let $F$ be a generic fiber of $\pi$, $b:=\dim B$
and $f:=2k-1-b$ the dimension of $F$. 
\begin{theorem} \label{th1.3}
Let $(M^{2k},g)$ be a manifold with fibered boundary. 
\begin{itemize}
\item[(a)] If $b$ is even,
\[ \chi(M)=\frac{1}{(2\pi)^k}\int_{M}\Pf^g.\]
\item[(b)] If $b$ is odd,
\begin{align} 
(2\pi)^k \chi(M)=&\int_{M}\Pf^g+(2\pi)^{{f/2}}\chi(F)\int_{B}\oPf(g^B) 
\label{gbt}.
\end{align}
\end{itemize}
\end{theorem}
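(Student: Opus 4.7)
The proof mirrors that of Theorem \ref{Theorem1}: truncate the end at $r = R$, apply the intrinsic Chern--Gauss--Bonnet formula \eqref{eq-4} on the compact piece $M_R := \{r \le R\}$ (for which $\chi(M_R) = \chi(M)$, since $M_R$ deformation retracts onto a compact core), and let $R \to \infty$. The boundary slice $N_R \cong N$ carries the induced metric $h_R := g^V \oplus R^2\pi^*g^B$; the outward unit normal is $\partial_r$, and $\II = \tfrac12 \mathcal{L}_{\partial_r}g$ evaluates to $\II_R = R\,\pi^*g^B$, a purely horizontal double form. In the $h_R$-orthonormal coframe $\{\theta^i,\, \eta^\alpha := R\,\pi^*e^\alpha\}$ one has $\II_R = R^{-1}\sum_\alpha \eta^\alpha \otimes \eta^\alpha$, each orthonormal component scaling as $R^{-1}$.

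\textbf{Curvature decomposition and case (a).}
Viewing $(N,h_R) \to (B, R^2g^B)$ as a Riemannian submersion with vertical metric $g^V$, I decompose $R^{h_R}$ via O'Neill's formulas into a purely vertical component (converging to $R^{g^V}$), a purely horizontal component (of order $R^{-2}$ in the orthonormal frame, essentially a rescaled pullback of $R^{g^B}$), and mixed pieces built from the O'Neill tensors $A$ and $T$, all of which decay in $R$. A type count on $\cB_{h_R}((R^{h_R})^j \wedge \II_R^{2k-2j-1})$ shows that the only summand admitting a nonzero limit involves exactly $f/2$ purely vertical and $j - f/2$ purely horizontal factors of $R^{h_R}$; this requires both $j \ge f/2$ and $f$ even. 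When $b$ is even, $f$ is odd and no such splitting exists, so every summand in the boundary correction contains at least one decaying factor: the correction vanishes in the limit, settling case~(a). The same scaling analysis is what one uses to verify the integrability of $\Pf^g$ on the end, needed for $\int_{M_R}\Pf^g \to \int_M\Pf^g$.

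\textbf{Case (b) and main obstacle.}
For $b$ odd, the surviving term's Berezin integral factorizes along the vertical--horizontal split: the vertical contraction produces $(f/2)!\,\Pf(g^V)$, while the horizontal contraction, after cancellation of the powers of $R$ from $(R^{h_R})^{j-f/2}$, $\II_R^{2k-2j-1}$, and the $R^b$ in the horizontal volume form, becomes $\cB_{g^B}((R^{g^B})^{j-f/2} \wedge (g^B)^{2k-2j-1})$ on $B$. The combinatorial factor $\binom{j}{f/2}$ (choice of which curvature factors play which role) combines with the coefficient $\frac{(-1)^{k+j}(2k-2j-3)!!}{j!(2k-2j-1)!}$ of \eqref{eq-4}; reindexing by $q := j - f/2$ rewrites the sum as exactly the defining sum of $\oPf(g^B)$ for $B$ of dimension $b = 2m+1$. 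Fubini along $\pi$ then yields $\int_F \Pf(g^V) = (2\pi)^{f/2}\chi(F)$ times $\int_B \oPf(g^B)$, which is \eqref{gbt} once the signs are tracked carefully. The principal obstacle is precisely to justify the type count: one must show that every off-diagonal O'Neill mix and every horizontal--vertical cross term carries at least one extra factor of $R^{-1}$ relative to the surviving diagonal contribution, so that after the top-degree Berezin contraction is combined with the $R^b$ growth of $\vol_{h_R}$, their contribution to the boundary integral is $O(R^{-1})$ and disappears in the limit.
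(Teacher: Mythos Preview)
Your proposal is correct and follows essentially the same route as the paper's proof: truncate, apply \eqref{eq-4}, decompose the slice curvature into horizontal/mixed/vertical pieces, and do a degree count to isolate the unique surviving term as $R\to\infty$. The paper packages the O'Neill-type decomposition you invoke as Proposition~\ref{Le2} (with $u=r^{-2}$), and carries out the power-counting in the double-form formalism rather than in an orthonormal frame, but the content is the same. One small caveat: the paper's convention is $\II(X,Y)=-\langle\nabla_X\nu,Y\rangle=-\tfrac12(L_{\partial_r}g)(X,Y)$, so in that convention $\II_R=-R\,\pi^*g^B$; since $\II$ enters \eqref{eq-4} to an odd power this sign must be carried through to recover the $+$ sign in \eqref{gbt}, which you already flag under ``once the signs are tracked carefully.''
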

Compared with Theorem \ref{Theorem1} there are two significant differences: the 
odd Pfaffian appears now in the base, not in the fibers; and the sign in front
of the transgression has changed.

The Gauss-Bonnet problem for fibered boundary metrics was previously studied by
Albin \cite{A} 
and also by Dai-Wei \cite{DW}. Theorem \ref{th1.3} can be seen as an extension
of 
their partial results.
Albin gives a formula in the case where either the fiber or the base of the
boundary 
fibration reduce to a point, while for $\dim(M)=4$, Dai and Wei give the
formula 
when the fiber is a point, i.e., for  "large conical" metrics, better known as 
scattering metrics by the Melrose school.
Note that Dai-Wei also state a formula in the general case, claiming the 
vanishing of the transgression term from \eqref{gbt}. This claim holds true for
even-dimensional $B$, 
but is incorrect when the base is odd-dimensional, as noted also in \cite{Ze}.
(They apply this result in dimension four 
when the fiber is a circle, hence their results concerning Hitchin-Thorpe
inequalities on blow-ups 
of the Taub-NUT space are not adversely affected by this issue.)

\subsection*{Perturbations of the model degenerate metrics and transgressions}
The Chern-Gauss-Bonnet formul{\ae} for incomplete edge metrics and for fibered
boundary metrics stated above 
in terms of the odd Pfaffian are new, even in the model case. 
In the context set forth in this paper we should mention, besides the thesis
paper of Albin cited above, 
previous results obtained by Rosenberg \cite{Ro} and Grieser \cite{Gr2}.  The main statement from  \cite{Ro} can be
seen as a particular case of 
Theorem \ref{th1.3}.  Our "conical" Gauss-Bonnet Theorem \ref{Tconca} recovers  Theorem 1.4 from \cite{Gr2}, albeit with a slightly stronger differentiability condition on the metric. 
 
In our view, one of the pleasant results of this work is being able to extend the
results from model metrics to large classes of perturbations of the model metrics
$g$ described in \eqref{eq-3}. We show that if the
perturbations of $g$ are of second order, in a sense made precise in Def.\
\ref{def10}, the formul{\ae} from Theorems \ref{Theorem1} and \ref{th1.3}
remain valid. 

It turns out that when one deals with (product-type) model metrics, one can take
advantage of certain symmetries in order to perform the computations, 
like being able to isolate the various components of the curvature form 
and second fundamental form. This does not seem to be case when 
perturbations are allowed, raising some difficulties for a direct 
computational approach.

In compensation, properties of transgression forms are fundamental for the
proofs given here and allow us to use arguments of topological nature 
in places where geometric computations seem overly complicated. 
We devote a first section to proving such properties, since they 
are not part of mainstream presentation of Chern-Weil theory.

Recall that given an Euclidean vector bundle $E\ra B$ of rank $2k$ 
endowed with two metric connections $\nabla_1$, $\nabla_2$, 
there exists a canonical form $\TPf(\nabla_1,\nabla_2)$ satisfying
\[ \Pf(\nabla_1)-\Pf(\nabla_2)=d\TPf(\nabla_1,\nabla_2).
\]
It is known since Chern \cite{Ch2} that the boundary integrand in the 
standard
Gauss-Bonnet Theorem can be described as such a transgression form. So at
first it might seem unremarkable that the correction term in Gauss-Bonnet
Theorem for first-order perturbations (see below) of the model metric is a
transgression
form integrated over the boundary. However, one should keep in mind that due to
the degeneracy of the metric, there is \emph{a priori} no well-defined 
connection along the singular locus, let alone two of them.

We  analyze perturbations of the model degenerate metrics, both 
for incomplete edge metrics and for complete fibered boundary metrics. 
The methods to treat the two cases
are similar and we only outline here the treatment of the non-complete (edge) case. One
natural approach would be to follow the ideas first introduced by
Melrose in the general context of the $b$-calculus \cite{Me, MeW}, and employ as 
background the edge tangent bundle, transferring all geometric structures
onto it. Nevertheless, since the edge tangent bundle is isomorphic 
(albeit non-canonically) to the tangent bundle, rather than relying explicitly
on this natural notion we prefer
to work here with an endomorphism $\varphi\in \End(TM)$ which has, 
given the choice of a
boundary defining function $r$, the following expression in a
collar neighborhood of $\partialM $:
\[ \varphi(v,w)=(rv,w),
\]
i.e., $\varphi$ acts as multiplication by $r$ on the vertical 
component of the fiber bundle $\partialM \ra B$ and leaves the horizontal and
the normal components unchanged. 
(Of course, the edge tangent bundle remains hidden behind the curtain.)

The endomorphism $\varphi$ is an isomorphism in the interior but not at $r=0$.
It is easy to see that the pull-back  
\[g^{\varphi}(\cdot,\cdot):=g(\varphi^{-1}(\cdot),\varphi^{-1}(\cdot))\]
of the model degenerate metric $g$ extends to a smooth metric on $TM$. 
Consequently, we consider perturbations $\tilde{g}$ of
$g$ that preserve this property. In fact, a perturbation $\tilde{g}$  of $g$ is
a degenerate metric that satisfies 
\[\tilde{g}^{\varphi}=g^{\varphi}+\alpha(\cdot,\cdot)\] for certain smooth
symmetric bilinear 
form $\alpha$ which vanishes at least to order $1$ at $r=0$. 
We call the perturbation to be of order $j\geq 1$ if $\alpha\in O(r^j)$. 

The main result that allows the investigation of Gauss-Bonnet formulas for
perturbations of model metrics is the next theorem which should be compared
with extension results for the Levi-Civita
connection in the context of $\phi$-geometry (see \cite{Va}, Prop. 1.5).
\begin{theorem}\label{theorem02} Let $\nabla^g$, $\nabla^{\tilde{g}}$ be the
Levi-Civita
connections of the edge degenerate metric $g$ and a first-order perturbation
$\tilde{g}$. Then $\varphi\nabla^g\varphi^{-1}$ and
$\varphi\nabla^{\tilde{g}}\varphi^{-1}$ extend to smooth connections on $TM$. 
If $\tilde{g}$ is a second-order perturbation, then the restriction of these
connections to $r=0$ coincide:
\begin{equation}\label{eq-6}
\varphi\nabla^{\tilde{g}}\varphi^{-1}\bigr|_{r=0}=\varphi\nabla^{{g}}\varphi^{-1
}\bigr|_{r=0}.
\end{equation}
\end{theorem}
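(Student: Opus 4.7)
The plan is to reduce the problem to a direct local computation with Cartan structure equations. Near a boundary point I will pick a smooth $g^\varphi$-orthonormal frame $\{e_\alpha\}$ adapted to the product splitting: $e_0=\partial_r$, vertical unit vectors $e_i$ for $g^V(r)$, and horizontal basic lifts $e_a$ of a local $g^B$-orthonormal frame via the Ehresmann connection. The corresponding $g$-orthonormal frame $\{E_\alpha\}:=\{\varphi^{-1}e_\alpha\}=\{\partial_r,\ r^{-1}e_i,\ e_a\}$ is defined only in the interior. A direct algebraic check shows that the matrix of connection $1$-forms $\omega^\alpha_\beta$ of $\nabla^g$ in $\{E_\alpha\}$ coincides entry-wise with the matrix of the conjugated connection $\varphi\nabla^g\varphi^{-1}$ in $\{e_\alpha\}$. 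The problem therefore reduces to verifying that the forms $\omega^\alpha_\beta$ extend smoothly across $r=0$.

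The dual coframe in the singular frame is $\Theta^0=dr$, $\Theta^i=r\eta^i$, $\Theta^a=\mu^a$ with $\eta^i,\mu^a$ smooth, so only $\Theta^i$ carries the explicit factor of $r$. Exterior differentiation gives
\[
d\Theta^i=r^{-1}\Theta^0\wedge\Theta^i+r\,d\eta^i,
\]
while $d\Theta^0=0$ and $d\Theta^a$ is smooth. Solving the torsion-free system $d\Theta^\alpha+\omega^\alpha_\beta\wedge\Theta^\beta=0$ jointly with antisymmetry $\omega^\alpha_\beta=-\omega^\beta_\alpha$, the a priori singular term $r^{-1}\Theta^0\wedge\Theta^i$ is precisely absorbed into the antisymmetric pair $\omega^i_0=\eta^i+(\textrm{smooth})$, which extends smoothly to $r=0$. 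All remaining $1$-forms are smooth, with contributions only from the Ehresmann curvature, the second fundamental forms of the fibration, and the $r$-derivative of $g^V(r)$. This establishes smoothness of $\varphi\nabla^g\varphi^{-1}$.

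For a perturbation $\tilde g^\varphi=g^\varphi+\alpha$ with $\alpha=O(r^j)$, $j\geq 1$, I will Gram--Schmidt $\{e_\alpha\}$ with respect to $\tilde g^\varphi$ to obtain a smooth $\tilde g^\varphi$-orthonormal frame $\{\tilde e_\alpha\}=\{e_\alpha+O(r^j)\}$, whose associated singular coframe satisfies $\tilde\Theta^\alpha=\Theta^\alpha+O(r^j)$. Running the same Cartan analysis for $\tilde g$, the leading singular term $r^{-1}\tilde\Theta^0\wedge\tilde\Theta^i$ is structurally unchanged and is again absorbed into $\tilde\omega^i_0$, with all other $\tilde\omega^\alpha_\beta$ smooth up to $r=0$; this yields smoothness of $\varphi\nabla^{\tilde g}\varphi^{-1}$. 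To prove the identity \eqref{eq-6} in the second-order case $j=2$, subtract the Cartan systems for $\tilde g$ and $g$: since $d(\tilde\Theta^\alpha-\Theta^\alpha)=O(r)$ and the algebraic solution of the resulting system depends linearly on its inhomogeneity, one concludes $\tilde\omega^\alpha_\beta-\omega^\alpha_\beta=O(r)$, which vanishes at $r=0$. The main obstacle I anticipate is the bookkeeping in the Cartan computation for $g$: one must verify that the couplings between the vertical, horizontal and normal blocks -- coming from the Ehresmann connection and from the $r$-dependence of $g^V(r)$ -- never produce further $r^{-1}$ singularities beyond the one absorbed into $\omega^i_0$, which relies on the precise block form \eqref{eq-3} of $g$.
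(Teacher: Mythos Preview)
Your approach via Cartan's structure equations in an adapted orthonormal frame is a genuine alternative to the paper's, which proceeds in two rather different steps. For the model metric (the paper's $h$, your $g$), the paper compares $\nabla^h$ with an explicit auxiliary block-diagonal connection $\nabla'=d\oplus\bigl[(\tfrac{\partial}{\partial r}+\tfrac{1}{r})dr+\nabla^{\VN}\bigr]\oplus\pi^*\nabla^B$ for which $\varphi\nabla'\varphi^{-1}$ manifestly extends, and then checks block by block via the Koszul formula that the tensor $\varphi(\nabla^h-\nabla')\varphi^{-1}$ extends. For a perturbation $\tilde g$, the paper does \emph{not} repeat any frame computation; instead it uses a ``Christoffel formula'' (Lemma~\ref{Crf}) expressing $\nabla^{\tilde g}-\nabla^g$ algebraically in terms of $\nabla^g C$, where $C$ is the $g$-symmetric endomorphism with $\tilde g=g(C\cdot,\cdot)$, conjugates by $\varphi$, and reduces everything to the already-established extension of $\varphi\nabla^g\varphi^{-1}$ together with $C^\varphi=I+r^j D$. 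Your method treats the model and perturbed cases uniformly via moving frames, which is conceptually pleasant; the paper's two-step route buys that the perturbation analysis becomes a short algebraic check once the model case is done.

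There is a gap in your second-order argument. You claim that since $d(\tilde\Theta^\alpha-\Theta^\alpha)=O(r)$ and the algebraic solution of the subtracted structure equations depends linearly on its inhomogeneity, one gets $\tilde\omega-\omega=O(r)$. The solution map $\tau^\alpha\mapsto\delta^\alpha_\beta$ (the unique antisymmetric $\delta$ with $\delta^\alpha_\beta\wedge\tilde\Theta^\beta=\tau^\alpha$) is indeed $C^\infty$-linear, but it does \emph{not} take smooth $O(r)$ data to smooth $O(r)$ output: at $r=0$ the coframe degenerates ($\tilde\Theta^i|_{r=0}=0$), so the system at $r=0$ leaves the vertical--vertical block $\delta^i_j|_{r=0}$ entirely unconstrained; equivalently, a smooth $2$-form $\tau=O(r)$ can have $\tilde\Theta^i\wedge\tilde\Theta^j$-coefficients of order $r^{-1}$ since $\tilde\Theta^i\wedge\tilde\Theta^j=O(r^2)$. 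The repair is to note the sharper fact that $\rho^\alpha:=\tilde\Theta^\alpha-\Theta^\alpha=\varphi^T(\tilde\theta^\alpha-\theta^\alpha)$ has $\Theta$-basis \emph{coefficients} of order $r^j$ (strictly stronger than $\rho^\alpha=O(r^j)$ as a smooth form), because $\varphi^T$ contributes an extra factor of $r$ on vertical coforms; tracking this through the Koszul-type solution formula then genuinely yields $\delta=O(r)$. The paper sidesteps this issue altogether: in its Christoffel expression one only needs $\nabla^{\varphi}_{\varphi^{-1}Y'}(r^2D)\to 0$ as $r\to 0$, which is immediate once one observes that for vertical $Y'$ the singular direction $\varphi^{-1}Y'=r^{-1}Y'$ cancels one power of $r$, leaving $r\cdot(\mathrm{smooth})$.
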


We use an "abstract" version of the Christoffel coefficients formula which
reduces this theorem to proving the smooth extension at $r=0$
of the Levi-Civita connection for the model metric $g$. 
It is exactly property \eqref{eq-6} that allows one to conclude that Theorem
\ref{Theorem1} holds  for second-order perturbations.

A consequence that is easy to miss of Theorem \ref{theorem02} is that even for
first-order perturbations $\tilde{g}$ of the model metrics $g$ one still has a
Gauss-Bonnet formula of type
\begin{equation}\label{eqintro3}
(2\pi)^k\chi(M)=\int_{M}\Pf^{\tilde{g}}+\int_B\gamma
\end{equation}
for some geometric term $\gamma$ which is itself the result of integration over
the fibers $\partial M\ra B$ of a geometric quantity which takes the guise of a
transgression form as follows. Let 
\begin{align*}
\nabla^1:={}&\varphi\nabla^{\tilde{g}}\varphi^{-1}\bigr|_{r=0},&
\nabla^0:={}&\varphi\nabla^{{g}}\varphi^{-1}\bigr|_{r=0}.
\end{align*}
be the two connections on $TM\bigr|_{\partial M}$ whose existence is guaranteed 
by Theorem \ref{theorem02}. The restriction $\nabla^0$ has a particularly 
simple geometric description (see Corollary \ref{Cor72}). 
Then the following Gauss-Bonnet formula holds:

\begin{theorem}\label{Theorem5}  Let $\tilde{g}$ be a first-order perturbation
of a model edge
metric $g=dr^2\oplus r^2 g^{V}\oplus \pi^*g^B$. Then 
 \[
(2\pi)^k \chi(M) = \int_{M}\Pf^{\tilde{g}}
-\int_B\left( {\Pf(g^B)}\int_{\partialM /B}\oPf(g^{V}) \right)
-\int_{\partialM } \TPf(\nabla^0,\nabla^1).
\]
The  form $\Pf(g^B)$ is zero, by definition, when $\dim{B}$ is odd.
\end{theorem}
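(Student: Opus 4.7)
The plan is to combine Theorem \ref{Theorem1} applied to the model metric $g$ with a Chern-Weil transgression argument relating the two Pfaffian forms. Theorem \ref{Theorem1} applied to $g$ gives
\[ (2\pi)^k\chi(M) = \int_M \Pf^g - \int_B \Pf(g^B)\int_{\partial M/B}\oPf(g^V),\]
with the convention that the second term vanishes when $\dim B$ is odd. It therefore suffices to establish the transgression identity
\[ \int_M \bigl(\Pf^{\tilde g} - \Pf^g\bigr) = \int_{\partial M}\TPf(\nabla^0,\nabla^1). \]

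To pass from the degenerate interior picture to a smooth one on all of $M$, I would invoke Theorem \ref{theorem02}: the connections $\bar\nabla^g := \varphi\nabla^g\varphi^{-1}$ and $\bar\nabla^{\tilde g} := \varphi\nabla^{\tilde g}\varphi^{-1}$ extend smoothly to connections on $TM \to M$, compatible with the smooth metrics $g^\varphi$ and $\tilde g^\varphi$ respectively. Since $\alpha := \tilde g^\varphi - g^\varphi$ vanishes at $r=0$ for a first-order perturbation, these two metrics coincide on $\partial M$, so $\nabla^0$ and $\nabla^1$ are both metric connections for the common metric $g^\varphi|_{\partial M}$ and $\TPf(\nabla^0,\nabla^1)$ is well-defined as recalled in the paper. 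Moreover, $\varphi^{-1}\colon (TM,g^\varphi) \to (TM,g)$ is an orientation-preserving bundle isometry over the interior intertwining the Levi-Civita connections, so $\Pf^g = \Pf(\bar\nabla^g,g^\varphi)$ on $M^\circ$, and the right-hand side extends smoothly to all of $M$; the same holds with $\tilde g$ in place of $g$.

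The transgression step proceeds via the convex interpolation $h_s := (1-s)\,g^\varphi + s\,\tilde g^\varphi$, a smooth family of smooth Riemannian metrics on $TM$. The associated Levi-Civita connections $\nabla^{h_s}$ interpolate between $\bar\nabla^g$ and $\bar\nabla^{\tilde g}$. Applying Chern-Weil on the product $M \times [0,1]$ with the fiberwise metric $h_s$ and the family $\nabla^{h_s}$ produces a smooth form $\tau \in \Lambda^{2k-1}(M)$ satisfying
\[ d\tau = \Pf(\bar\nabla^{\tilde g},\tilde g^\varphi) - \Pf(\bar\nabla^g,g^\varphi), \]
and Stokes' theorem then gives $\int_M (\Pf^{\tilde g} - \Pf^g) = \int_{\partial M}\tau|_{\partial M}$.

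The main obstacle is the identification $\tau|_{\partial M} = \TPf(\nabla^0,\nabla^1)$. Since $h_s|_{\partial M} = g^\varphi|_{\partial M}$ is constant in $s$, the restriction of the construction to $\partial M$ reduces to the classical Chern-Weil transgression for a path of metric connections on a Euclidean bundle with a fixed metric, whose primitive depends only on its endpoint connections. This yields precisely $\TPf(\nabla^0,\nabla^1)$, and combining with Theorem \ref{Theorem1} gives the claimed formula.
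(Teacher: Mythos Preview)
Your overall strategy---apply Theorem \ref{Theorem1} to the model metric and then correct by a Stokes argument on $M$---is sound, but the transgression step contains a genuine error. You claim that the bundle isometry $\varphi^{-1}$ ``intertwines the Levi-Civita connections'', i.e., that $\bar\nabla^g=\varphi\nabla^g\varphi^{-1}$ is the Levi-Civita connection of $g^\varphi$. This is false: $\varphi$ is a gauge transformation of $TM$ covering the identity, not the differential of a diffeomorphism, so conjugation does not preserve torsion-freeness. Concretely, for $Y$ a vertical field constant in $r$ one computes from \eqref{nry} and Lemma \ref{L1} that $(\bar\nabla^g)_{\partial_r}Y=0$ while $(\bar\nabla^g)_Y\partial_r=Y$, hence the torsion $T^{\bar\nabla^g}(\partial_r,Y)=-Y\neq 0$. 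Consequently the Levi-Civita family $\nabla^{h_s}$ does \emph{not} have endpoints $\bar\nabla^g$ and $\bar\nabla^{\tilde g}$, your primitive $\tau$ satisfies $d\tau=\Pf(\nabla^{\tilde g^\varphi})-\Pf(\nabla^{g^\varphi})$ rather than $\Pf^{\tilde g}-\Pf^g$, and the boundary restriction of $\nabla^{h_s}$ need not interpolate between $\nabla^0$ and $\nabla^1$.

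The fix is to replace $\nabla^{h_s}$ by the conjugated family $\bar\nabla^s:=\varphi\nabla^{g_s}\varphi^{-1}$ where $g_s:=(1-s)g+s\tilde g$; each $g_s$ is a first-order perturbation of $g$, so by Theorem \ref{theorem02} each $\bar\nabla^s$ extends smoothly to $M$ and is $h_s$-compatible (since indeed $g_s^\varphi=h_s$). Then $\Pf(\bar\nabla^s,h_s)=\Pf^{g_s}$, the transgression built from this family via the parallel-transport machinery of Section \ref{sbs2} does satisfy $d\tau=\Pf^{\tilde g}-\Pf^g$, and since all $h_s$ agree on $\partial M$ the boundary path is exactly $s\mapsto \bar\nabla^s|_{r=0}$ from $\nabla^0$ to $\nabla^1$, whence $\int_{\partial M}\tau=\int_{\partial M}\TPf(\nabla^0,\nabla^1)$ by Proposition \ref{P1}. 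With this correction your route works and is arguably more direct than the paper's: the paper instead starts from Theorem \ref{trgl}(a), which already expresses $(2\pi)^k\chi(M)$ as $\int_M\Pf^{\tilde g}$ minus a single boundary transgression $\TPf(\nabla^1,\partial_r)$, and then splits that boundary term via Proposition \ref{p8} into $\TPf(\nabla^0,\partial_r)$ (identified with the odd Pfaffian contribution) plus $\TPf(\nabla^0,\nabla^1)$.
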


Note that the sum of the two boundary terms is itself a
transgression form.

In the particular case when the degeneration is of first order with respect to 
a \emph{conical} metric, we are able to give a geometric expression for the boundary
contribution in the spirit of the classical Gauss-Bonnet formula. Let
\[\mathcal{G}_{j,2k-1}^{\partialM
}:=\frac{1}{j!(2k-1-2j)!}\cB_{g^{\varphi}}\left((R^N)^j\wedge
(\II^g)^{2k-1-2j}\right).
\]
where the second fundamental form $\II^g$ is defined via $\nabla^1$ above.

\begin{theorem} 
Let $g$ be a first-order perturbation of a conical metric $dr^2\oplus
r^2g^N$. Then
\[ (2\pi)^k\chi(M)=\int_{M}\Pf^g-\sum_{j=0}^{k-1}(-1)^j(2j-1)!!\int_{\partialM
}\mathcal{G}_{k-1-j,2k-1}^{\partialM }
\]
\end{theorem}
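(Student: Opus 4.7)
The plan is to derive this theorem as the specialization of Theorem \ref{Theorem5} to the conical case, where the boundary fibration $\pi\colon\partial M\to B$ collapses to $B=\{\pt\}$ and the total space is the fiber $\partial M=N$ with $g^V=g^N$. Under these conventions $\Pf(g^B)=1$ and $\int_{\partial M/B}\oPf(g^V)=\int_{\partial M}\oPf(g^N)$, so Theorem \ref{Theorem5} reduces to
\[
(2\pi)^k\chi(M)=\int_M\Pf^{g}-\int_{\partial M}\oPf(g^N)-\int_{\partial M}\TPf(\nabla^0,\nabla^1).
\]
The task is to identify the two boundary integrands, modulo exact forms, with the single sum $\sum_{j=0}^{k-1}(-1)^j(2j-1)!!\,\mathcal{G}_{k-1-j,2k-1}^{\partial M}$.

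The first step is to compute $\nabla^0=\varphi\nabla^g\varphi^{-1}\bigr|_{r=0}$ explicitly for the model cone $g=dr^2\oplus r^2 g^N$. The warped-product Koszul formulas $\nabla^g_X\partial_r=X/r$ and $\nabla^g_XY=\nabla^N_XY-rg^N(X,Y)\partial_r$, conjugated by $\varphi$, produce at $r=0$
\[
\nabla^0_X\partial_r=X,\qquad \nabla^0_XY=\nabla^N_XY-g^N(X,Y)\partial_r,\qquad \nabla^0_{\partial_r}Z=0\quad(X,Y\in TN,\ Z\in TM).
\]
Thus on $TM|_{\partial M}=\bR\partial_r\oplus TN$ the connection $\nabla^0$ decomposes as the Levi-Civita connection $\nabla^N$ on the tangential block, the trivial connection $d$ on the radial line, and an off-diagonal shape operator equal to the identity — precisely the description of Corollary \ref{Cor72} specialized to the conical setting. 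Combined with the transgression interpretation of $\oPf$ along the cone $dr^2\oplus r^2 g^N$ recalled after Definition 1.1 (and detailed in Section \ref{conman}), this yields the identification
\[
\oPf(g^N)=\TPf(\nabla^N\oplus d,\nabla^0),
\]
up to an exact form.

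With this identification, additivity of Chern-Simons transgression forms applied to the triple $(\nabla^N\oplus d,\,\nabla^0,\,\nabla^1)$ telescopes the two boundary integrands,
\[
\int_{\partial M}\bigl(\oPf(g^N)+\TPf(\nabla^0,\nabla^1)\bigr)=\int_{\partial M}\TPf(\nabla^N\oplus d,\nabla^1).
\]
One then expands the right-hand side by the standard Chern-Simons integral formula along the linear path $\nabla^t=(1-t)(\nabla^N\oplus d)+t\nabla^1$, using that the curvature of the constant endpoint $\nabla^N\oplus d$ is $R^N$ on $TN$ and vanishes on the radial factor, while the difference $\nabla^1-(\nabla^N\oplus d)$ is by construction the second fundamental form $\II^g$ attached to $\nabla^1$. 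Computing the resulting $t$-integral and collecting terms under $\cB_{g^\varphi}$, every contribution has the shape $\cB_{g^\varphi}\bigl((R^N)^{k-1-j}\wedge(\II^g)^{2j+1}\bigr)$, and the combinatorial factors recombine, after reindexing, into $\tfrac{(-1)^j(2j-1)!!}{(k-1-j)!(2j+1)!}$, producing the stated sum.

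The main obstacle is the transgression identification $\oPf(g^N)=\TPf(\nabla^N\oplus d,\nabla^0)$: it requires carefully matching the combinatorial coefficients in the definition of $\oPf$ with those arising from the Chern-Simons integral along the cone, and tracking the orientation of the inward normal $\partial_r$ at the singular locus. Once this algebraic identity is in place, the combinatorial matching in the last step is a routine expansion that leverages the product structure of $\nabla^N\oplus d$ to suppress all mixed curvature terms.
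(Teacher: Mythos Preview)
Your approach is the paper's: specialize Theorem \ref{Theorem5}, identify the odd Pfaffian as a transgression from the cylindrical connection $d\oplus\nabla^N$ to $\nabla^0$ (this is Proposition \ref{Tcorfac1} in the conical case), telescope the two transgressions into $\TPf(d\oplus\nabla^N,\nabla^1)$, and compute the latter as in the classical boundary formula. But you have misidentified where the work lies.

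The step you flag as the ``main obstacle,'' namely $\oPf(g^N)=\TPf(\nabla^N\oplus d,\nabla^0)$, is already Proposition \ref{Tcorfac1} and needs no further argument. The actual gap is your assertion that ``the difference $\nabla^1-(\nabla^N\oplus d)$ is by construction the second fundamental form $\II^g$.'' By its definition, $\II^g(X,Y)=h^\varphi((\nabla^1)_XY,\partial_r)$ records only the off-diagonal block of $\nabla^1$ relative to $TM|_{\partial M}=\bR\partial_r\oplus TN$. For your claim to hold you need the block-diagonal part of $\nabla^1=\varphi\nabla^g\varphi^{-1}|_{r=0}$ to equal $d\oplus\nabla^N$, and this is \emph{not} automatic for a first-order perturbation $g$ of the cone. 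The paper proves it by going back to the Christoffel formula \eqref{eq100}: for $X,Y',Z'\in T\partial M$ one checks that each of the three terms on the right of \eqref{eq100} vanishes at $r=0$, using that $C^\varphi|_{\partial M}=\id$ kills the first term and that in the conical case the factor $r^{-1}$ from $\varphi^{-1}(Y')$ cancels the factor $r$ from $\varphi(X)$ in the remaining two. Without this verification, the tangential block of $\nabla^1$ could differ from $\nabla^N$, extra curvature terms would appear in the transgression, and the reduction to the classical boundary integrand would not go through.
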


Similar results, proved with the same techniques, hold for first and second
order perturbations of manifolds with fibered boundary (see Section
\ref{Secaden}). 

The notions of model degenerate metrics studied here, together with their perturbations, 
depend on the choice of a boundary-defining function $x$. A model edge degenerate metric 
with respect to such a function $x$ will look more complicated with respect to a different 
choice $x'$. We refer to the work of Grieser \cite{grieser}, which solves completely 
the conic case, and of Joshi \cite{Jo} dealing with the $b$-case. 
In this work we assume the boundary-defining function $x$ to be fixed once and for all, 
leaving open the quest for an optimal choice of $x$.

\subsection*{Orbifolds}

A natural example
of first-order perturbation of a model edge metric is the complement of a
submanifold $B$ in a Riemannian manifold $M$ when one lifts the original metric
to the oriented blow-up of $B$. The integral of the transgression form from Theorem
\ref{Theorem5} vanishes in this case, reflecting a basic topological fact:
\[\chi(M\setminus B)=\chi(M)-\chi(B).\]
The situation becomes more interesting when we blend in isometric actions 
of finite groups. If $M$ is a Riemannian orbifold with
singularities locally modeled on quotients of type $N/G$ where  $G$ acts freely
on
$N\setminus \Fix_G(N)$ and $\Fix_G(N)$ is a smooth submanifold locus, we
obtain the following Gauss-Bonnet formula for orbifolds:

\begin{theorem}\label{T7} Let $\hat{M}$ be a compact Riemannian orbifold of dimension $2k$
with simple
singularities along $Z\subset \hat{M}$ and let $g$ be the Riemannian metric on
$\hat{M}\setminus Z$. Then 
\begin{equation} \chi(\hat{M})=\frac{1}{(2\pi)^k}\int_{\Int{\hat{M}}}
  \Pf^g +
\sum_{Z_i\in \Fix(\hat{M})}\chi(Z_i) \frac{|G_i|-1}{|G_i|}
\end{equation}
where $\Fix(\hat{M})$ is the set of connected components of the singular locus
of
$\hat{M}$.
\end{theorem}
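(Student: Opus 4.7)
The plan is to apply Theorem \ref{Theorem1}(b), which the excerpt extends to second-order perturbations of the model edge metric as a consequence of Theorem \ref{theorem02}, to the real oriented blow-up $\tilde M$ of $\hat M$ along the singular locus $Z$. Locally, every orbifold chart takes the form $U\cong N/G_i$ with a finite group $G_i$ acting by isometries on a smooth Riemannian manifold $N$ and fixing a smooth submanifold $F_i\subset N$; the real blow-up $\tilde N$ of $N$ along $F_i$ carries a free $G_i$-action near the boundary $\partial\tilde N = S(N_{F_i})$, and the quotient $\tilde N/G_i$ provides a local model for $\tilde M$. Globally, $\tilde M$ is therefore a compact smooth manifold with boundary $\partial\tilde M$ fibering over $Z$, with fiber $S^{n_i-1}/G_i$ over each component $Z_i$, where $n_i = \mathrm{codim}_{\hat M} Z_i$. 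Since no nontrivial orientation-preserving action of a finite group on an even-dimensional sphere can be free (by the Lefschetz fixed-point theorem applied to individual elements in $SO(n_i)$), $n_i$ is forced to be even, hence $\dim Z_i$ is even as well.

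The key geometric observation is that $F_i$, being the fixed locus of a finite group of isometries, is totally geodesic in $N$, so its second fundamental form vanishes identically. Standard Fermi coordinate analysis then yields an expansion of the smooth metric $g_N$ of the form $dr^2 + g^{F_i}(x) + r^2 g^S_{\mathrm{round}} + O(r^2)$ in a tubular neighborhood, the absence of an $O(r)$ term reflecting the vanishing second fundamental form. Passing to the $G_i$-quotient and combining with the scaling behavior of $\varphi$ on the vertical (sphere) directions, one verifies that the lifted metric $\tilde g$ on $\tilde M$ satisfies $\tilde g^\varphi - g^\varphi \in O(r^2)$, where $g := dr^2 \oplus \pi^* g^{Z_i} \oplus r^2 g^V_{\mathrm{round}}$ is the model edge metric and $g^V_{\mathrm{round}}$ denotes the quotient of the round sphere metric by $G_i$. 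Hence $\tilde g$ is a second-order perturbation, and Theorem \ref{Theorem1}(b) applies to give
\[ (2\pi)^k \chi(\tilde M) = \int_{\Int \hat M} \Pf^g - \sum_i \int_{Z_i} \Pf(g^{Z_i}) \int_{S^{n_i-1}/G_i} \oPf(g^V_{\mathrm{round}}). \]

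The fiber integrals are evaluated by a round-sphere computation: applying formula \eqref{eq-4} to the Euclidean unit ball $B^{n_i}$, on which $\Pf=0$, yields $\int_{S^{n_i-1}} \oPf = (2\pi)^{n_i/2}$; hence $\int_{S^{n_i-1}/G_i} \oPf = (2\pi)^{n_i/2}/|G_i|$. Combined with Chern-Gauss-Bonnet on each $Z_i$, each boundary contribution evaluates to $(2\pi)^k \chi(Z_i)/|G_i|$. The proof concludes with the topological identity $\chi(\hat M) = \chi(\tilde M) + \chi(Z)$, obtained via additivity of the compactly-supported Euler characteristic together with $\chi(\partial\tilde M) = 0$ (odd-dimensional sphere-quotient fibers). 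Substituting into the displayed equation and collecting terms gives the claim. The principal technical point to verify is the second-order perturbation property, which hinges on a careful Fermi expansion upstairs on $N$ and on the scaling behavior under $\varphi$-conjugation.
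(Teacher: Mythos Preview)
Your proof is correct and follows the same overall architecture as the paper---blow up along the singular locus, apply the edge Gauss--Bonnet formula, evaluate the boundary contribution, and combine with the Mayer--Vietoris identity $\chi(\hat M)=\chi(\hat M\setminus Z)+\sum_i\chi(Z_i)$---but the way you evaluate the boundary contribution is genuinely different. The paper treats the blown-up metric only as a \emph{first-order} perturbation of the model edge metric (this is Theorem~\ref{tvsv} in full generality), then lifts the transgression integrand to the $|\Gamma_i|$-fold cover $S(\nu B)$ and invokes the topological identity from Example~\ref{examp1} to conclude that the limit equals $\chi(Z_i)/|\Gamma_i|$. You instead exploit the fact that the fixed-point set of a group of isometries is \emph{totally geodesic}, so that $\II\equiv 0$ in Theorem~\ref{tvsv} and the perturbation is actually of \emph{second order}; Theorem~\ref{Theorem1}(b) then applies directly, and the boundary term factorises as $\Pf(g^{Z_i})\cdot\oPf(g^V_{\mathrm{round}})$, which you evaluate by the round-sphere computation. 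Your route is shorter in this setting because it bypasses the first-order perturbation machinery of Theorem~\ref{Theorem5}; the paper's route, by contrast, illustrates that machinery and does not rely on the totally geodesic property.

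One small point deserves more care. Your Lefschetz argument forcing $n_i$ even assumes $G_i$ acts on $\nu_p F_i$ through $\SO(n_i)$, which you do not justify. This does follow from the (implicit) orientability of $\hat M\setminus Z$ needed to integrate $\Pf^g$: since $G_i$ fixes $F_i$ pointwise it acts trivially on $T_pF_i$, so orientation-preservation on $T_pM_i$ forces orientation-preservation on $\nu_p F_i$. Alternatively, note that the parity claim is inessential: if $\dim Z_i$ were odd, Theorem~\ref{Theorem1}(a) gives zero boundary contribution while $\chi(Z_i)=0$, so the formula holds regardless.
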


One should compare Theorem \ref{T7} with the classical Gauss-Bonnet formula for orbifolds of Satake \cite{Sa} (Theorem 2) which expresses the \emph{orbifold Euler characteristic} as an integral of the Pfaffian.

\subsection*{Historical notes} The necessary disclaimer for this subsection is
that our intention is to give 
a slight sense of the huge development of results directly related to
Gauss-Bonnet. 
Voluntary or involuntary omissions  are obviously inevitable.
 
The Gauss-Bonnet formula for polygonal surfaces embedded in Euclidean $3$-space 
was found almost 200 years ago by Gauss, Binet and Bonnet. The standard textbook
formula for closed surfaces in $R^3$ 
linking the Euler characteristic with the integral of the Gaussian curvature
was stated and proved by Walther von Dyck \cite{vD} at the end of the 19th
century.
The modern history of its generalizations can be found in the nice survey 
\cite{Wu}. The integrand in higher dimensions was discovered in the 1920's
by Heinz Hopf in the case of hypersurfaces in Euclidean space, while
the validity of Hopf's formula for embedded manifolds of arbitrary codimension
in $R^n$ was independently proved in 1940 by Allendoerfer and Fenchel, building
on work of Weyl.
In 1943 Allendoerfer and Weil
\cite{AW} not only proved the validity of Hopf's formula 
in the abstract (non-embedded) case, but also gave the correction term for a
manifold
with boundary. They went even further and produced a formula
valid for a topological manifold with boundary which is a Riemannian polyhedron,
i.e., boundary points have neighborhoods which are differentially
modeled on convex cones in $\bR^n$ and there exists a globally defined smooth
Riemannian metric on the resulting differentiable polyhedron. Their theorem is
in some sense at the crossroad of what we call embedded/non-embedded situation.
Soon afterwards, S.\ S.\ Chern \cite{Ch1,Ch2} gave intrinsic proofs for compact
smooth Riemannian
manifolds, both with and without boundary. Chern's articles have been immensely
influential. 
It is worth mentioning here that Chern's theorems, together with Hirzebruch's
signature theorem and the
Hirzebruch-Riemann-Roch formula, constituted the main motivating examples behind
the celebrated 
Atiyah-Singer index theorem.

With regard to more modern
developments, the generalization of the Allendoerfer-Weil theorem of R.\ Walter
\cite{W} on compact locally convex subsets of Riemannian manifolds anticipates
the techniques coming from Geometric Measure Theory with applications to the
integral geometry of subanalytic cycles promoted by J.\ Fu \cite{Fu}. Ideas from
stratified Morse theory have also been used successfully in the context of
integral geometry of tamed sets \cite{BrK}. 
Melrose \cite{Me} proved a Gauss-Bonnet Theorem for $b$-exact metrics as a
corollary 
to his celebrated $b$-Index Theorem.
More recently, an enhanced version
of the Allendoerfer-Weil theorem was used by McMullen \cite{McM} to compute the 
volume of the moduli space of $n$-pointed Riemann surfaces of genus $0$. 
Probabilistic interpretations and proofs of Gauss-Bonnet have been given by
\cite{Ni2}. 
Other important works related to the topic of this paper are cited in the
bibliography. 

\subsubsection*{Acknowledgments:} The first named author had interesting
discussions
about the topic of the article with Jorge de Lira, Luciano Mari and Luquesio
Jorge and for that he would like to thank them. He would like to particularly
thank Vincent Grandjean who patiently listened to the crude ideas that finally
took shape inhere.

\section{The transgressions of the Pfaffian. General facts} \label{S2}
We include here a series of general facts, more or less well-known, about the
transgression of the Pfaffian. There exist various incarnations of the
transgression form (compare for example \cite{Ch2,Gi,Ni,W}) and one of the
purposes of this section is to bring them
under the same umbrella in order to simplify the presentation in the sequel. 
Another purpose is to put together a collection of formulas relating
transgressions 
for different metrics and different metric connections which will
turn out to be essential for our computations in the degenerate metric setting.

\subsection{Transgressions and connections}
Let $E\ra M$ be an oriented Euclidean vector bundle of rank $2k$ over a manifold
$M$. 
Every connection $\nabla$ compatible with the metric
gives rise to a closed form of degree $2k$ on $M$, the
Pfaffian, associated to the curvature tensor $F(\nabla):=d^\nabla\circ \nabla$, 
locally a skew-symmetric matrix of $2$-forms. If 
$F(\nabla)_{ij}:=\langle F(\nabla)s_j,s_i\rangle$ in a local orthonormal basis
$\{s_1,\ldots, s_{2k}\}$ of $E$ then
\begin{equation}\label{Pfeq} \Pf(\nabla):=\frac{1}{2^k k!}\sum_{\sigma\in
S_{2k}}\epsilon(\sigma)F(\nabla)_{\sigma(1)\sigma(2)}\wedge\ldots\wedge
F(\nabla)_{\sigma(2k-1)\sigma(2k)}.
 \end{equation}
In the next section we will define the Pfaffian intrinsically via double forms,
proving its
gauge independence.
What is special about the Pfaffian compared to other invariant
polynomials is that it vanishes in the presence 
of a non-zero parallel section in $E$. 

Given a smooth path of metric connections $\alpha^{\nabla}:=(\nabla^t)_{t\in
[0,1]}$, one can construct a transgression form
$\TPf(\alpha^{\nabla})$ which satisfies
\begin{equation}\label{eq1} d\TPf (\alpha^{\nabla})=\Pf(\nabla^1)-\Pf(\nabla^0).
\end{equation}
The construction goes as follows. On the oriented Euclidean vector bundle
$\pi_2^*E\ra [0,1]\times M$ (where $\pi_2:[0,1]\times M\ra M$ is the projection)
consider the connection $\tilde{\nabla}:=\frac{d}{dt}+\nabla^t$
which acts on a section $(s_t)_{t\in [0,1]}$  of $\pi_2^*E$ as follows:
\[\tilde{\nabla}s_t=dt\otimes\frac{\partial s_t}{\partial t}+(\nabla^ts_t).
\]
Consider the Pfaffian $\Pf(\tilde{\nabla})$ which is a closed form and use the
homotopy formula for $H:=\id_{[0,1]\times M}$ and $\Pf(\tilde{\nabla})$ to
conclude that \eqref{eq1} is valid with
\[ \TPf(\alpha^{\nabla}):=\int_{[0,1]}\Pf(\tilde{\nabla}),
\]
the integral being over the fibers of the projection $\pi_2$.

\begin{example} \label{ex1} Suppose $(M,g)$ is a Riemannian manifold with
boundary of even dimension. Then the Euclidean vector bundle
$TM\bigr|_{\partialM }\ra \partialM $ is endowed with two metric connections.
One is the Levi-Civita connection $\nabla^1:=\nabla^{M}$ on $M$ and the other
one is "the cylindrical connection" $\nabla^0:=d\oplus \nabla^{\partialM }$
where we use the splitting
\begin{equation}\label{eq-1}TM\bigr|_{\partialM }=\bR\nu \oplus T\partialM 
\end{equation}
induced by the unit normal $\nu$. Notice that $\Pf(\nabla^0)=0$ since $\nu$ is a
parallel section and the curvature splits into a direct sum of factors, one of
which is zero. We use the affine path of connections
${\nabla}^s:=(1-s)\nabla^0+s\nabla^1$ to construct the Chern transgression
$\TPf(\nabla^M)=\TPf^g$ associated to the metric $g$. This is the form which
appears in the Gauss-Bonnet formula. 

If the splitting \eqref{eq-1} is extended to a neighborhood $U$ of $\partialM $
(e.g.\ via minus the gradient of the distance function to $\partialM $),
the identity $d\TPf^g=\Pf^g$ on $U$ is valid on $U$.
\end{example}
\begin{rem}\label{rem1} For the reverse path $-\alpha^{\nabla}$ defined
via $-\alpha^{\nabla}(t):=\alpha^{\nabla}(1-t)$ one has:
\[ \TPf(-\alpha^{\nabla})=-\TPf(\alpha^{\nabla}).
\]
Indeed, one uses the orientation-reversing diffeomorphism 
\begin{align*} 
[0,1]\times M\ra [0,1]\times M,&&
(t,m)\ra (1-t,m)
\end{align*}
while fiberwise integration is sensitive to the orientation.
\end{rem}
\begin{prop}\label{P1} For two smooth paths $\alpha^{\nabla}$ and
$\beta^{\nabla}$ of metric connections with
$\alpha^{\nabla}(i)=\beta^{\nabla}(i)$, $i=0,1$ there exists a form
$\TPf(\alpha^{\nabla},\beta^{\nabla})$ of degree $2k-2$ such that:
\begin{equation}\label{eq2}\TPf(\alpha^{\nabla})-
\TPf(\beta^{\nabla})=d\TPf(\alpha^{\nabla},\beta^{\nabla}).
\end{equation}
\end{prop}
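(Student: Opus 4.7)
The plan is to extend the one-parameter construction of $\TPf(\alpha^\nabla)$ to a two-parameter construction on $[0,1]^2$. Since the space of metric connections on $E$ is affine, the convex combination
\[
\nabla^{s,t}:=(1-s)\alpha^\nabla(t)+s\beta^\nabla(t),\qquad (s,t)\in[0,1]^2,
\]
defines a smooth family of metric connections which interpolates along $s$ between the two given paths. By the hypothesis $\alpha^\nabla(i)=\beta^\nabla(i)$ for $i=0,1$, the restrictions $\nabla^{s,0}$ and $\nabla^{s,1}$ are independent of $s$, a fact that will be crucial.

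On the pullback bundle $\pi^*E\to[0,1]^2\times M$, with $\pi$ the projection onto $M$, mimic the recipe of the one-parameter case by introducing the metric connection
\[
\hat\nabla\sigma:=ds\otimes\partial_s\sigma+dt\otimes\partial_t\sigma+\nabla^{s,t}\sigma,
\]
and set
\[
\TPf(\alpha^\nabla,\beta^\nabla):=\int_{[0,1]^2}\Pf(\hat\nabla),
\]
fiber-integrating the closed $2k$-form $\Pf(\hat\nabla)$ along the square. By the fiber-integration Stokes formula applied to the corner manifold $[0,1]^2\times M\to M$ and to the closed form $\Pf(\hat\nabla)$, one obtains
\[
d\,\TPf(\alpha^\nabla,\beta^\nabla)=\pm\int_{\partial[0,1]^2}\Pf(\hat\nabla).
\]
On the edges $\{t=0\}\times[0,1]$ and $\{t=1\}\times[0,1]$, $\nabla^{s,t}$ does not depend on $s$, so $F(\hat\nabla)$ carries no $ds$-factor there and the $s$-fiber integrals vanish. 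On $\{s=0\}\times[0,1]$ and $\{s=1\}\times[0,1]$, the restriction of $\hat\nabla$ coincides with the one-parameter connections that define $\TPf(\alpha^\nabla)$ and $\TPf(\beta^\nabla)$, respectively. With the induced boundary orientations these two edges contribute $\TPf(\alpha^\nabla)-\TPf(\beta^\nabla)$, yielding \eqref{eq2}.

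The main obstacle is the bookkeeping of signs: the overall sign from fiber-integration Stokes on a corner manifold must be reconciled with the induced orientations on the edges $\{s=0\}$ and $\{s=1\}$ and with the sign conventions already fixed for the one-parameter $\TPf$ (cf.\ Remark \ref{rem1}), so that the right-hand side comes out to be exactly $\TPf(\alpha^\nabla)-\TPf(\beta^\nabla)$ rather than its negative. Once the signs are set, the vanishing of the two remaining boundary contributions is automatic from $\partial_s\nabla^{s,t}\equiv 0$ at $t=0,1$, and no further analytic input is required; a minor sign mismatch is absorbed at the end by replacing $\TPf(\alpha^\nabla,\beta^\nabla)$ by its negative if necessary.
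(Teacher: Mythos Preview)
Your proof is correct and follows essentially the same route as the paper: the same convex-combination family $\nabla^{s,t}=(1-s)\alpha^\nabla(t)+s\beta^\nabla(t)$ on the square, the same connection $\hat\nabla$ on $\pi^*E\to[0,1]^2\times M$, and the same application of Stokes/fiber-integration to $\Pf(\hat\nabla)$, with the two ``constant'' edges contributing nothing. The paper simply fixes the sign from the outset by defining $\TPf(\alpha^\nabla,\beta^\nabla):=-\int_{\Box}\Pf(\hat\nabla)$, which is exactly the adjustment you allow for at the end.
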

\begin{proof} 
Let $\mathcal{A}$ be the space of affine connections compatible
with the metric. It is an affine space modeled on $\Gamma(M;
T^*M\otimes \End^-(E))$.  Let $\Box:=[0,1]\times [0,1]$.
Consider the smooth family of connections
\begin{align*} 
\widetilde{\alpha\beta}:\Box\ra \mathcal{A},&&
\widetilde{\alpha\beta}(s,t)=(1-s)\alpha^{\nabla}(t)+s\beta^{\nabla}(t).
\end{align*}
On the vector bundle $\pi_{2}^*E \ra \Box\times M$ (where
$\pi_2:\Box\times M\ra M$ is the projection) consider the
connection
$\hat{\nabla}:=\frac{d}{ds}+\frac{d}{dt}+\widetilde{\alpha\beta}(s,t)$
which acts on a smooth section $u:\Box\ra \Gamma(M;E)$ of
$\pi_{2}^*E$ via
\[ \hat{\nabla}s=  ds\otimes \frac{\partial u}{\partial
s}+dt\otimes\frac{\partial u}{\partial t}
+\left[(1-s)\alpha^{\nabla}(t)+s\beta^{\nabla}(t)\right](u).
\] 
Applying Stokes formula on $\Box$
to the smooth closed form $\Pf(\hat{\nabla})\in\Lambda^*(\Box\times M)$ we
obtain 
\[ -d\int_{\Box} \Pf(\hat{\nabla})=\int_{\partial \Box}\Pf(\hat{\nabla})
\]
where integration is really integration over the fibers of the projections
$\Box\times M\ra M$ and $(\partial \Box) \times M\ra M$. 
Now $\partial \Box$ consists of two constant paths of connections for $t=0$ and
$t=1$,
while for $s=0$ and $s=1$ by definition the integral on the right hand side
gives the transgressions induces by $\alpha^{\nabla}$ and $\beta^{\nabla}$.
Taking into account the orientations, we get \eqref{eq2} with
\[ \TPf(\alpha^{\nabla},\beta^{\nabla}):=-\int_{\Box}\Pf(\hat{\nabla}).\qedhere
\]
\end{proof}

\begin{notation*} For two metric connections $\nabla^0$ and $\nabla^1$ on $E$ we
denote by  $\TPf(\nabla^0,\nabla^1)$
 the transgression form induced by the affine path $(1-s)\nabla^0+s\nabla^1$.
 \end{notation*}
 
 If $\nabla^0$ is obtained from $\nabla^1$ through a section $s:M\ra
E$ of norm $1$ by using the splitting
 \begin{equation} \label{eq3} E=\bR s\oplus \langle s \rangle^{\perp}
 \end{equation}
 with $\nabla^0:=d\oplus P\nabla^1P$, $P$ being the orthogonal projection on
$\langle s \rangle^{\perp}$ then we set
$\TPf(\nabla^1,s):=\TPf(\nabla^0,\nabla^1)$. We will use the same notation even
if $s$ is only defined along a submanifold $B$ (or boundary) of $M$ with the
understanding that the splitting \eqref{eq3} holds only along $B$, $\nabla^0$ is
a connection on $E\bigr|_{B}\ra B$ and consequently $\TPf$ is a form on $B$.
 
If $s$ is clear from the context, we use $\TPf(\nabla^1)$ for
$\TPf(\nabla^1,s)$. If the connection $\nabla^1$ is the Levi-Civita
connection of a metric $g$, then we use $\TPf^g$ for $\TPf(\nabla^1)$, like 
in Example \ref{ex1}.
 
\begin{prop}\label{p4} For any $4$ metric connections $\nabla^i$, $0\leq i\leq
3$, there
exists a form $\gamma$ such that
\[
\TPf(\nabla^0,\nabla^1)+\TPf(\nabla^1,\nabla^2)+\TPf(\nabla^2,
\nabla^3)+\TPf(\nabla^3,\nabla^0)=d\gamma.
\]
\end{prop}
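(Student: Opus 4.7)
The plan is to bootstrap Proposition \ref{P1} by introducing a ``diagonal'' affine segment from $\nabla^0$ to $\nabla^2$ and splitting the four-term loop into two triangles whose contributions cancel along this diagonal.

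First I would establish an auxiliary triangle identity: for any three metric connections $\nabla^a,\nabla^b,\nabla^c$ there is a form $\gamma_{abc}$ of degree $2k-2$ with
\[
\TPf(\nabla^a,\nabla^b)+\TPf(\nabla^b,\nabla^c)+\TPf(\nabla^c,\nabla^a)=d\gamma_{abc}.
\]
For this, take $\alpha$ to be the path $\nabla^a\to\nabla^b\to\nabla^c$ obtained by concatenating the two affine segments, and $\beta$ the direct affine path $\nabla^a\to\nabla^c$; both share endpoints so Proposition \ref{P1} applies. From the defining formula $\TPf(\,\cdot\,)=\int_{[0,1]}\Pf(\tilde\nabla)$ together with the additivity of fiber integration over $[0,1]=[0,\tfrac12]\cup[\tfrac12,1]$ one gets $\TPf(\alpha)=\TPf(\nabla^a,\nabla^b)+\TPf(\nabla^b,\nabla^c)$, while Remark \ref{rem1} rewrites $-\TPf(\beta)$ as $\TPf(\nabla^c,\nabla^a)$; combining yields the triangle identity.

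With this in hand, apply it twice, once to $(\nabla^0,\nabla^1,\nabla^2)$ and once to $(\nabla^0,\nabla^2,\nabla^3)$, and add the two equations. The contributions $\TPf(\nabla^2,\nabla^0)$ and $\TPf(\nabla^0,\nabla^2)$ cancel exactly by Remark \ref{rem1}, leaving the desired four-term identity with $\gamma:=\gamma_{012}+\gamma_{023}$.

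The only subtlety I expect is that $\alpha$ is only piecewise smooth, which is outside the strict hypotheses of Proposition \ref{P1}. This is harmless: $\TPf$ depends continuously on the family through a fiber integral of a polynomial in the connection and its $t$-derivative, so one may round the corner of $\alpha$ within any $C^{0}$-neighborhood and absorb the resulting modification into the exact form on the right via another application of Proposition \ref{P1}. A more self-contained alternative that avoids the issue entirely is to replicate the $\Box=[0,1]^{2}$ construction from the proof of Proposition \ref{P1}, using the bilinear interpolation
\[
\Psi(s,t)=(1-s)(1-t)\nabla^0+s(1-t)\nabla^1+st\,\nabla^2+(1-s)t\,\nabla^3,
\]
whose four edges traversed counterclockwise recover exactly the four transgressions in the statement; Stokes on $\Box$ applied to $\Pf(\hat\nabla)$ for the tautological connection on $\pi_{2}^{*}E\to\Box\times M$ then produces the identity directly with $\gamma=-\int_{\Box}\Pf(\hat\nabla)$.
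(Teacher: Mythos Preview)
Your proposal is correct. The self-contained alternative you give at the end---placing the four connections at the vertices of $\Box=[0,1]^2$ via the bilinear interpolation $\Psi(s,t)$ and applying Stokes to $\Pf(\hat\nabla)$---is exactly the paper's proof: the authors simply say ``put $\nabla^i$ in cyclic order at the vertices of a smooth map $\theta:\Box\to\mathcal{A}$ which on the edges of $\Box$ gives the affine path connecting $\nabla^i$ and $\nabla^{i+1}$'' and invoke the argument of Proposition~\ref{P1}.

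Your primary route via the triangle identity is a legitimate variant. It has the mild advantage of reducing the four-connection statement to the already-proved two-path statement (Proposition~\ref{P1}) rather than rerunning the Stokes argument, at the cost of the piecewise-smooth wrinkle you flag. Your fix by rounding the corner is adequate, since $\TPf$ is reparameterization-invariant and Proposition~\ref{P1} absorbs the modification into an exact term; alternatively one can smooth the concatenation to a path with the same endpoints and quote Proposition~\ref{P1} twice. Either way the argument goes through, and the paper's one-step square construction is simply the cleaner packaging.
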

\begin{proof} Put $\nabla^i$ in cyclic order at the vertices of a smooth
map $\theta:\Box\ra \mathcal{A}$ which on the edges of $\Box$ gives the affine
path connecting $\nabla^i$ and $\nabla^{i+1}$. The proof goes on as in
Proposition \ref{P1}.
\end{proof}

\begin{prop}\label{p5} Let $M$ be a Riemannian manifold (with or without
boundary).
Let $\nabla^0$ and $\nabla^1$ be two metric connections and $s:M\ra E$ a
smooth section of norm $1$. Then there exists a $(2k-2)$-form $\gamma$ such
that the following equality of pairs holds:
\[
(\Pf(\nabla^1),-\TPf(\nabla^1,s))-(\Pf(\nabla^0),-\TPf(\nabla^0,
s))=(-d\TPf(\nabla^1,\nabla^0), \TPf(\nabla^1,\nabla^0)+d\gamma).\]
If $s$ is only defined along a submanifold (or boundary) $B$ of $M$ then the
same relation holds with the second components restricted to $B$.
\end{prop}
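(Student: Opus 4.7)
The plan is to check the two components of the displayed pair equality separately. The first component reads $\Pf(\nabla^1)-\Pf(\nabla^0) = -d\TPf(\nabla^1,\nabla^0)$, which follows immediately from \eqref{eq1} together with Remark \ref{rem1}: under the notation convention, $\TPf(\nabla^0,\nabla^1)$ is the transgression along the affine path starting at $\nabla^0$ and ending at $\nabla^1$, so $d\TPf(\nabla^0,\nabla^1)=\Pf(\nabla^1)-\Pf(\nabla^0)$, and reversing the path changes the sign.

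For the second component, the strategy is to apply Proposition \ref{p4} to a suitable square of four connections. Let $P$ be the orthogonal projection onto $\langle s\rangle^\perp$ and set $\bar\nabla^i := d\oplus P\nabla^i P$ for $i=0,1$, so that by definition $\TPf(\nabla^i,s)=\TPf(\bar\nabla^i,\nabla^i)$. Placing $\bar\nabla^0,\nabla^0,\nabla^1,\bar\nabla^1$ at the vertices of $\Box$ in cyclic order and invoking Proposition \ref{p4} yields
\[
\TPf(\bar\nabla^0,\nabla^0)+\TPf(\nabla^0,\nabla^1)+\TPf(\nabla^1,\bar\nabla^1)+\TPf(\bar\nabla^1,\bar\nabla^0)=d\gamma
\]
for some $(2k-2)$-form $\gamma$. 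Using Remark \ref{rem1} to rewrite three of the four summands, this becomes
\[
\TPf(\nabla^0,s)-\TPf(\nabla^1,\nabla^0)-\TPf(\nabla^1,s)+\TPf(\bar\nabla^1,\bar\nabla^0)=d\gamma,
\]
which after transposing is precisely the second component.

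The one genuine input is the vanishing $\TPf(\bar\nabla^1,\bar\nabla^0)=0$. The interpolating connection on $\pi_2^*E\to[0,1]\times M$ used in the construction of $\TPf(\bar\nabla^0,\bar\nabla^1)$ has the form $\tfrac{d}{dt}+(1-t)\bar\nabla^0+t\bar\nabla^1 = \tfrac{d}{dt}+d\oplus\bigl((1-t)P\nabla^0P+tP\nabla^1P\bigr)$, so the pullback of $s$ is parallel throughout. Hence $\Pf$ of this interpolating curvature vanishes pointwise on $[0,1]\times M$, as recalled right after \eqref{Pfeq} and exploited in Example \ref{ex1}, and the fiber integral defining $\TPf$ vanishes as well.

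The only potential obstacle is the bookkeeping of signs and of the cyclic ordering on $\Box$; the geometric content reduces to the parallel-section vanishing described above. If $s$ is only defined on a submanifold (or boundary) $B\subset M$, the entire square argument is carried out for connections on $E|_B\to B$, producing the same identity between forms on $B$.
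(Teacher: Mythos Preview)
Your proof is correct and follows essentially the same route as the paper: the first component is handled via \eqref{eq1} and Remark~\ref{rem1}, and for the second you apply Proposition~\ref{p4} to the same square of connections $\bar\nabla^0,\nabla^0,\nabla^1,\bar\nabla^1$ (the paper writes $\nabla^{ic}$ for your $\bar\nabla^i$), then use that $s$ is parallel for every connection on the affine segment between $\bar\nabla^0$ and $\bar\nabla^1$ to kill the fourth edge term. Your write-up is in fact slightly more explicit about the sign bookkeeping than the paper's.
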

\begin{proof} The equality in the first component is clear by \eqref{eq1} and
Remark \ref{rem1}.

For the second component,  let $\nabla^{0c}:=d\oplus
P\nabla^0P$ and $\nabla^{1c}:=d\oplus P\nabla^1 P$, where $P$ is the
projection onto $\langle s\rangle^{\perp}$. Apply Proposition \ref{p4} to
the connections $\nabla^{0c},\nabla^{0},\nabla^{1},\nabla^{1c}$ to
get:
\[
\TPf(\nabla^0,s)-\TPf(\nabla^1,s)+\TPf(\nabla^{1c},\nabla^{0c})=-\TPf(\nabla^0,
\nabla^1)+d\gamma=\TPf(\nabla^1,\nabla^0)+d\gamma.
\]
But $\TPf(\nabla^{1c},\nabla^{0c})=0$ because $s$ is simultaneously
parallel for $\nabla^{0c}$ and
$\nabla^{1c}$ hence $\Pf(\tilde{\nabla})$ vanishes on the affine segment 
of connections from $\nabla^{0c}$ to $\nabla^{1c}$.
\end{proof}
Proposition \ref{p5} has a topological interpretation. Suppose that $s$ is a
unit section of $E\bigr|_{\partialM }$.
Each pair $(\Pf(\nabla^i),-\TPf(\nabla^i,s))$ is closed in
$\Omega^{2k}(M,\partialM ):=\Omega^{2k}(M)\oplus \Omega^{2k-1}(\partialM )$
for the differential
\[ d(\omega,\gamma):=(-d\omega,\iota^*\omega+d\gamma).
\]
Proposition \ref{p5} says that two such pairs determine the same relative
cohomology class. In the compact case, this was proved in \cite{Ci} by showing
that such a pair is Lefschetz dual to the zero locus of a  generic extension of
$s$ to $M$. In the classical case, when $s$ is the unit normal of $\partialM $
this is also a consequence of Chern-Gauss-Bonnet \cite{Ch1} since the map
\[ (\omega,\gamma)\ra \int_{M}\omega+\int_{\partialM }\gamma
\]
gives an isomorphism  $H^{\dim{M}}(M,\partialM )\simeq \bR$.

\begin{prop}\label{p6} Let $(M,g)$ be a manifold, $\pi:E\ra M$ a
Euclidean vector bundle with metric connection $\nabla$, and 
$s_0,s_1:M\ra S(E)$ sections in the sphere bundle of $E$. Suppose there exists a
homotopy $(s_t)_{t\in[0,1]}:M\ra
S(E)$ between the two sections. Then there exists a smooth form $\eta$ such
that:
\[ \TPf(\nabla,s_1)-\TPf(\nabla,s_0)=d\eta.
\] 
\end{prop}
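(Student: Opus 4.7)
The plan is to lift the problem to the cylinder $[0,1]\times M$ and conclude by a single fiber-integration. Set $\pi_2\colon[0,1]\times M\to M$, pull back the oriented Euclidean bundle and connection to $\tilde E:=\pi_2^*E$ and $\tilde\nabla:=\pi_2^*\nabla$, and assemble the homotopy into a single unit section $\tilde s\in\Gamma(\tilde E)$, $\tilde s(t,m):=s_t(m)$, which restricts to $s_i$ on $\{i\}\times M$ for $i=0,1$. Since $\|\tilde s\|\equiv 1$, the orthogonal splitting $\tilde E=\mathbb{R}\tilde s\oplus\langle\tilde s\rangle^\perp$ is smooth, and the cylindrical connection $\tilde\nabla^0:=d\oplus\tilde P\tilde\nabla\tilde P$ (with $\tilde P$ the projection onto $\langle\tilde s\rangle^\perp$) is a well-defined metric connection on $\tilde E$ for which $\tilde s$ is globally parallel.

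Next I would consider the transgression
$$\omega:=\TPf(\tilde\nabla,\tilde s)=\TPf(\tilde\nabla^0,\tilde\nabla)\in\Omega^{2k-1}([0,1]\times M),$$
which by \eqref{eq1} satisfies $d\omega=\Pf(\tilde\nabla)-\Pf(\tilde\nabla^0)$. The first term equals $\pi_2^*\Pf(\nabla)$ and hence has no $dt$-component, so its integral along the fibers of $\pi_2$ vanishes; the second term is identically zero since $\tilde s$ is a globally parallel unit section for $\tilde\nabla^0$, so the curvature splits off a trivial rank-$1$ factor and the Pfaffian vanishes (the same mechanism used in Example \ref{ex1} and in the proof of Proposition \ref{p5}). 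Consequently $\int_{[0,1]}d\omega=0$.

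Finally, setting $\eta:=\int_{[0,1]}\omega\in\Omega^{2k-2}(M)$ and invoking the standard fiber-integration identity
$$d\int_{[0,1]}\omega=\iota_1^*\omega-\iota_0^*\omega-\int_{[0,1]}d\omega,$$
together with the obvious restrictions $\iota_i^*\omega=\TPf(\nabla,s_i)$, would yield $d\eta=\TPf(\nabla,s_1)-\TPf(\nabla,s_0)$, as desired. The only delicate point, and therefore the main obstacle I anticipate, is bookkeeping the sign and orientation conventions, specifically the orientation of $[0,1]$ in the fiberwise Stokes formula combined with the asymmetry in our notation $\TPf(\nabla,s)=\TPf(\nabla^0,\nabla)$ (cf.\ Remark \ref{rem1}). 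Once the signs are consistent, the argument is a clean one-parameter analogue of the cylinder construction already used to prove Proposition \ref{P1}, with no new computational input.
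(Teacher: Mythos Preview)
Your argument is correct and is essentially the same as the paper's: both apply the homotopy/fiber-integration formula over $[0,1]$ to a transgression form whose exterior derivative is $\pi_2^*\Pf(\nabla)$ (which fiber-integrates to zero), and then read off the boundary terms as $\TPf(\nabla,s_i)$. The only cosmetic difference is that the paper first passes to the sphere bundle $S(E)$ and uses the tautological section $\tau$ to build a universal transgression $\TPf(\pi^*\nabla,\tau)$, then pulls back along $H=(s_t):[0,1]\times M\to S(E)$; since $H^*\tau=\tilde s$ and $H^*(\pi^*\nabla)=\tilde\nabla$, the paper's $H^*\omega$ is exactly your $\omega$, so the two computations coincide.
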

\begin{proof} Let $\tau$ be the tautological section of $\pi^*E\ra S(E)$.
The corresponding "tautological" transgression $\TPf(\pi^*\nabla, \tau)\in
\Omega^*(S(E))$ satisfies:
\begin{align*}
(s_t)^* \TPf(\pi^*\nabla, \tau)={}&\TPf(\nabla,s_t),&(\forall)
~t\in[0,1];\\
d\TPf(\pi^*\nabla,\tau)={}&\pi^*\Pf(\nabla).
\end{align*}
The homotopy formula for the homotopy $H:=(s_t)_{t\in[0,1]}:[0,1]\times M\ra
S(E)$ and $\omega=\TPf(\pi^*\nabla,\tau)$ implies that
\[
\TPf(\nabla,s_1)-\TPf(\nabla,s_0)=d\int_{[0,1]}H^*\omega+\int_{[0,1]}dH^*\omega.
\]
But $dH^*\omega=\pi_2^*\Pf(\nabla)$ where $\pi_2:[0,1]\times M\ra M$ is the
projection. The fiber integral over the fibers of $\pi_2$ of any  form of type
$\pi_2^*\eta$ is zero.
\end{proof}

Proposition \ref{p6} implies the following refinement of Proposition \ref{p5}:
\begin{prop} \label{p8}  Let $M$ be a  manifold with or without boundary, let
$\nabla^0$ and $\nabla^1$ be two metric connections on the Euclidean vector
bundle $E$ and $(s_t)_{t\in[0,1]}:M\ra S(E)$ a smooth homotopy. Then there
exists a $(2k-2)$-form $\gamma$ such that: 
\[
(\Pf(\nabla^1),-\TPf(\nabla^1,s_1))-(\Pf(\nabla^0),-\TPf(\nabla^0,
s_0))=(-d\TPf(\nabla^1,\nabla^0), \TPf(\nabla^1,\nabla^0)+d\gamma).\]
If the homotopy is defined only along a submanifold (or boundary) $B$ then the
second components are defined only over $B$.
\end{prop}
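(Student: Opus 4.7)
The plan is to bootstrap Proposition \ref{p8} from the two immediately preceding results: Proposition \ref{p5}, which handles a single fixed section, and Proposition \ref{p6}, which measures the effect of replacing one section by another homotopic one. The first component of the asserted identity is automatic: by \eqref{eq1} and Remark \ref{rem1} we have $d\TPf(\nabla^1,\nabla^0)=\Pf(\nabla^0)-\Pf(\nabla^1)$, which is exactly the first-component equality. All the work is in the second component.

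For the second component I would argue as follows. Apply Proposition \ref{p5} with the single section $s_0$ (viewed over $M$, or over $B$ in the boundary version); this yields a $(2k-2)$-form $\gamma_0$ with
\[
\TPf(\nabla^0,s_0)-\TPf(\nabla^1,s_0)=\TPf(\nabla^1,\nabla^0)+d\gamma_0.
\]
Next, apply Proposition \ref{p6} to the connection $\nabla^1$ and the homotopy $(s_t)_{t\in[0,1]}$ to produce a form $\eta$ with
\[
\TPf(\nabla^1,s_1)-\TPf(\nabla^1,s_0)=d\eta.
\]
Subtracting the second identity from the first and rearranging,
\[
\TPf(\nabla^0,s_0)-\TPf(\nabla^1,s_1)=\TPf(\nabla^1,\nabla^0)+d(\gamma_0-\eta),
\]
which is precisely the second-component equality with $\gamma:=\gamma_0-\eta$. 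In the version where $(s_t)$ is only defined along a submanifold or boundary $B$, each of the two inputs (Propositions \ref{p5} and \ref{p6}) is already stated in the restricted setting, so the same combination produces a form $\gamma$ defined on $B$ only.

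There is essentially no obstacle: the main conceptual content has already been absorbed into Propositions \ref{p5} and \ref{p6}, and the present statement is just the additivity observation that one can change the connection with $s_0$ fixed (that is Proposition \ref{p5}) and then change the section with $\nabla^1$ fixed (that is Proposition \ref{p6}), and the two corrections add up at the level of primitives. The only point that deserves a little care is matching signs and orientations, in particular making sure that the $\TPf(\nabla^1,\nabla^0)$ appearing on the right is the one induced by the affine path from $\nabla^1$ to $\nabla^0$ (as in the \emph{Notation} convention), consistent with Remark \ref{rem1}; this is exactly the orientation already used in the proof of Proposition \ref{p5}, so no additional sign manipulation is needed.
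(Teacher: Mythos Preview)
Your argument is correct and is exactly the approach the paper intends: it states Proposition \ref{p8} as the evident ``refinement of Proposition \ref{p5}'' obtained by combining Proposition \ref{p5} (for the fixed section $s_0$) with Proposition \ref{p6} (to pass from $s_0$ to $s_1$ for $\nabla^1$). Your sign checks match the paper's conventions, and the boundary/submanifold variant follows in the same way.
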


\subsection{Transgressions and metrics} \label{sbs2} 
On an Euclidean vector bundle $V$ of rank $2k$, it is convenient
to identify the space of skew-symmetric endomorphisms  $\End^-(V)$ 
with $\Lambda^2V^*$ by the rule:
\[\End^-(V)\ni A\mapsto a_A(v,w):=\langle v,Aw\rangle=-\langle Av,w\rangle.
\]
Notice that on $\bR^2$, $\begin{bmatrix}0 &1 \\ -1 &0\end{bmatrix}$ goes to
$e_1^*\wedge e_2^*$. The Pfaffian of $A$ is defined by
\[ \Pf(A)=\tfrac{1}{k!}\langle a_A^{\wedge k}, \vol_{V^*}\rangle\in \bR.
\]
In any orthonormal basis of $V$, $\Pf$ is a polynomial with integral
coefficients in the entries of $A$.

Clearly this definition can be extended to endomorphisms $A\in\mathcal{A}\otimes
\End^-(V)$
with values in any algebra $\mathcal{A}$, with the inner product acting
only on the $\Lambda^*V$ component. Then $\Pf(A)\in \mathcal{A}$. In this note,
$\mathcal{A}$ will be the algebra of differential forms on a manifold. 

If $\nabla$ is a metric connection on a Euclidean vector bundle $E$ of rank
$2k$, from the curvature tensor $F(\nabla)\in \Gamma(\Lambda^2T^*M\otimes
\End^-(E))$ we get a form of degree $2$ with values in
$\Lambda^2E^*$ called the
curvature form and denoted here by the same symbol. Explicitly:
\begin{align*}F(\nabla):\Lambda^2TM\otimes \Lambda^2E \ra \bR,&&
F(\nabla)(X,Y;Z,W)=\left\langle
Z,\left([\nabla_X,\nabla_Y]-\nabla_{[X,Y]}\right)W\right\rangle.
\end{align*}
Then $F(\nabla)^{k}\in \Lambda^{2k}T^*M\otimes \Lambda^{2k}E^*$, and
$\Pf(F(\nabla))\in \Omega^{2k}(M)$. This definition agrees with 
\eqref{Pfeq}. The operation of contraction with the volume element in the second
component is sometimes called Berezin integral. Double forms, i.e., sections of
$\Lambda^*T^*M\otimes \Lambda^*E^*$, form an algebra.

From now on we take $E=TM$.
Let $g_0, g_1$ be two Riemannian metrics on $M$, and
$\nabla^{g_0},\nabla^{g_1}$ 
the corresponding Levi-Civita connections. We want to find an explicit primitive
of the 
difference $\Pf(R^{g_1})-\Pf(R^{g_0})$. Set $g_s=(1-s)g+s g_1$, a $1$-parameter
family
of  Riemannian metrics on $M$, and define 
a Riemannian metric on $X:=[0,1]\times M$ as a generalized cylinder \cite{bgm}:
\[
G=ds^2+g_s.
\]
It is easy to see that for every $x\in M$,
the intervals $[0,1]\times\{x\}$ are geodesics in $X$. Therefore, parallel
transport on $X$ 
along these intervals preserves the orthogonal complement to $\partial_s$, i.e.,
$TM$. We get  for each $s$ a vector bundle isometry
\[
\tau_s:(TM,g_0)\to (TM,g_s).
\]
We identify in this way for all $s$ the Euclidean vector bundles with metric
connections
$(TM,g_s,\nabla^{g_s})$ with $(TM,g_0,\nabla^s)$, 
where $\nabla^s=\tau_s^{-1}\nabla^{g_s}\tau_s$. 
Clearly such an identification preserves the Pfaffian of the curvature:
\[
\Pf(R^{g_s})=\Pf(R^s),
\]
where $R^s=F(\nabla^s)$ is the curvature of $\nabla^s$.
Write 
\[\Pf(R^{g_1})=\Pf(R^{g_0})+\int_0^1 \frac{d}{d s} \Pf(R^{g_s})ds
=\Pf(R^{g_0})+\int_0^1 \frac{d}{d s} \Pf(R^{s})ds.\]
The advantage of the second expression over the first is that now we work in a
fixed Euclidean vector bundle $(TM,g_0)$ endowed with a family in $s$ 
of metric connections 
$\nabla^s$, and the coefficients of the Pfaffian polynomial depend on the
metric but not on the connection. We compute
\[{\partial_s} \Pf(R^{s})\otimes\vol_{g_0}=
\frac{1}{k!}{\partial_s}\left((R^s)^k\right)=
\frac{1}{(k-1)!}\dot{R}^s\wedge
\left((R^s)^{k-1}\right).
\]
It is well-known that $\dot{R}^s$ is $d^{\nabla^s}$-exact: indeed, 
let $u,v$ be vector fields on $X$ tangent to
$M$ and parallel in the $\partial_s$ direction. For every vector field $Y$ on
$M$ 
constant in $s$ (i.e., $[\partial_s,Y]=0$), write
\[\langle \nabla_Y^s u,v\rangle = \langle \nabla^0_Y u,v\rangle+ \langle
\theta^s(Y) u,v\rangle.
\]
Then $\dot\nabla^s=\dot\theta^s$ and so $\dot{R}^s= d^{\nabla^s}\dot{\theta}^s$.
From the second Bianchi identity, $d^{\nabla^s}R^s=0$, so 
\[
\dot{R}^s\wedge
(R^s)^{k-1}=d^{\nabla^s}\left(\dot\theta\wedge(R^s)^{k-1}\right).
\]
For every double form $\mu\in\Lambda^*M\otimes\Lambda^{2k}M$, 
write $\mu=\cB_{g_0}\mu\otimes \vol_{g_0}$, where
$\cB_{g_0}$ is the Berezin integral with respect to $g_0$. 
Since $\vol_{g_0}$ is parallel, we have
$d^{\nabla^s}\mu = d(\cB_{g_0}\mu)\otimes\vol_{g_0}$. Hence
\[\frac{\partial}{\partial s} \Pf(R^{s})=\frac{1}{(k-1)!}
d\left(\cB_{g_0}\left(\dot\theta^s\wedge(R^s)^{k-1}\right)\right).\]
It follows that
\begin{align}\label{tfp}
\Pf(R^{g_1})=\Pf(R^{g_0})+\frac{1}{(k-1)!}d\left(\int_0^1
\cB_{g_0}(\dot\theta^s\wedge(R^s)^{k-1})\right).
\end{align}

\begin{prop} \label{P5} Let $\alpha^{\nabla}(s):=\nabla^s$ be the above family
of
$g_0$-compatible connections. Then
\[\frac{1}{(k-1)!}\int_0^1
\cB_{g_0}(\dot\theta^s\wedge(R^s)^{k-1})= \TPf(\alpha^{\nabla}).\]
\end{prop}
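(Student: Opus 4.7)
The plan is to compute $\TPf(\alpha^\nabla) = \int_{[0,1]} \Pf(\tilde\nabla)$ directly from the definition and match it with the claimed expression. The main step is to identify the curvature of the connection $\tilde\nabla = d/ds + \nabla^s$ on the pulled-back bundle $\pi_2^*E \to [0,1]\times M$ and then isolate the part of its $k$-fold power that contains $ds$ (only that part survives fiber integration).

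First I would compute $F(\tilde\nabla)$. A direct calculation on sections $u(s,x)$, using that $\tilde\nabla u = ds\wedge\partial_s u + \nabla^s u$ and that $\partial_s$ differentiates the family of connections, gives
\[ F(\tilde\nabla) = R^s + ds\wedge \dot{\nabla}^s,\]
and under the identification $\End^-(E)\simeq\Lambda^2E^*$ of Section \ref{sbs2}, this reads $F(\tilde\nabla)=R^s+ds\wedge\dot\theta^s$ as a double form of bidegree $(2,2)$ on $[0,1]\times M$.

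Next I would expand $F(\tilde\nabla)^k$ in the double-form algebra. The key point is that $(ds\wedge\dot\theta^s)^2 = 0$ because $ds\wedge ds = 0$; moreover $R^s$ has even total bidegree so it commutes (in the graded sense of double forms) with $ds\wedge\dot\theta^s$. Therefore the binomial expansion collapses to
\[ F(\tilde\nabla)^k = (R^s)^k + k\,(R^s)^{k-1}\wedge ds\wedge \dot\theta^s.\]
Applying the Berezin integral $\tfrac{1}{k!}\cB_{g_0}$ (recall $\pi_2^*E$ carries the Euclidean structure $g_0$ after the parallel-transport identification) yields
\[\Pf(\tilde\nabla) = \Pf(R^s) + \frac{1}{(k-1)!}\,\cB_{g_0}\!\left((R^s)^{k-1}\wedge ds\wedge\dot\theta^s\right).\]

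Finally I would perform the fiber integration $\int_{[0,1]}$. The term $\Pf(R^s)$ contains no $ds$ and integrates to zero, while the other term can be rearranged: moving $ds$ past $(R^s)^{k-1}$ (base-degree $2k-2$, even) produces no sign, and likewise the graded commutation of $\dot\theta^s$ and $(R^s)^{k-1}$ in the double form algebra is trivial since all the relevant degree products are even. Hence
\[ \TPf(\alpha^\nabla) = \int_{[0,1]}\Pf(\tilde\nabla) = \frac{1}{(k-1)!}\int_0^1\cB_{g_0}\!\left(\dot\theta^s\wedge(R^s)^{k-1}\right),\]
which is exactly the asserted identity.

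The only subtle step is the sign bookkeeping in the graded double-form algebra of bidegrees on $[0,1]\times M$ (tracking $ds$ through both the base-degree and fiber-degree Koszul signs). Since here every bidegree encountered is either $0$ or even on the forms being permuted with $ds$, all signs turn out to be trivial, which makes the final matching clean; still, this is the one place where care is needed.
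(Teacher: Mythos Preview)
Your proof is correct and follows essentially the same approach as the paper: compute $F(\tilde\nabla)=R^s+ds\wedge\dot\theta^s$, expand the $k$-th power using $(ds)^2=0$, and observe that only the term containing $ds$ survives fiber integration. The paper phrases the last step via the contraction $\iota_{\partial_s}$ rather than your direct ``pick off the $ds$-part'' reasoning, but the content and sign analysis are identical.
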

\begin{proof} Let $\tilde{\nabla}:=\frac{d}{ds}+\nabla^s$ be the connection on
$\pi_2^*TM$ used in the previous subsection. By definition
$\TPf(\alpha^{\nabla})=\int_{[0,1]}\Pf(\tilde{\nabla})$, where the  integration
is over the fibers of $\pi_2:X\ra M$. 

Every form $\gamma$ on $X$ is a sum of type $ds\wedge \omega_s+\eta_s$ where
$(\omega_s)_{s\in [0,1]}$ and $(\eta_s)_{s\in[0,1]}$ are smooth families of
smooth forms on $M$. Fiber integration kills the component $\eta_s$ which does
not contain the volume form of the fiber. In other words:
\[ \int_{[0,1]}\gamma=\int_{[0,1]} ds\wedge \omega_s=
\int_0^1\omega_s~ds=\int_0^1\iota_{\partial_s}\gamma~ds.
\]
The integrals $\int_0^1(\cdot)~ ds$ is to be understood as integrals of
functions (of $s$) with values in $\Lambda^*T_pM$ for  $p\in M$.  
We need to compute $\iota_{\partial_s}(\Pf(\tilde{\nabla}))$. First notice that
$ F(\tilde{\nabla})=ds\wedge \dot{\nabla^s}+F(\nabla^s).$
Then
\begin{equation}\label{F1} F(\tilde{\nabla})^k=k\; ds\wedge \dot{\nabla}^s\wedge
F(\nabla^s)^{k-1}+F(\nabla^s)^k.
\end{equation}
The contraction operation $\iota_{(\cdot)}$ can be defined equally well on forms
with values in an algebra. Then 
\[
\iota_{\partial_s}(\Pf(\tilde{\nabla}))=\frac{1}{k!}\iota_{\partial_s}(\cB_{g_0}
(F(\tilde{\nabla})^k))=\frac{1}{k!}\cB_{g_0}(\iota_{\partial_s}[F(\tilde{\nabla}
)^k])\]
where  $\iota_{\partial_s}$ acts by definition only on the first component of a
double form\footnote{The curvature form is in general a section of
$\Lambda^2T^*M\otimes \Lambda^2E^*$.}. The second equality holds because
$\cB_{g_0}$ acts on the second component only of the double form. 
By \eqref{F1},
\[\iota_{\partial_s}(\Pf(\tilde{\nabla}))=\frac{1}{(k-1)!}\;\cB_{g_0}(\dot{
\theta}
^s\wedge F(\nabla^s)^{k-1}).\qedhere
\]
\end{proof}

\section{Gauss-Bonnet on manifolds with boundary}
This section contains a  proof of the well-known version of
Gauss-Bonnet on manifolds with boundary proved by Allendoerfer-Weil  and Chern
$80$ years ago.  While both versions of Gauss-Bonnet have received many proofs, 
the main idea we use in this section seems natural and delivers a direct proof. 
It will appear again in the degenerate metric case.   

 Briefly, the generalization of Gauss-Bonnet to manifolds with boundary where
the metric is of product type near the boundary is a triviality and of course
there is no contribution from the boundary. In order  to find the  "defect" in
the non-product metric case we use parallel transport to produce \emph{tangent
bundle isometries} between a product metric and the one we are interested in
(origins of this idea can be traced to \cite{GaBo,Gi}). Then we use properties
of  transgressions. The boundary integrand we obtain is not obviously equal to
the standard one obtained by Chern \cite{Ch2} and we clarify at the end of the
section why this is the case. We use the formalism of double forms which
simplifies the presentation to a certain extent.

Let $g$ be a smooth metric on a compact manifold $M^{2k}$ with boundary
$\pM$.  Let $R^h\in\Lambda^2 \pM\otimes\Lambda^2\pM$ be the curvature form
of the
boundary with  respect to the induced metric $h$ and $\II$ the second
fundamental form of 
$\pM\hookrightarrow M$. Our convention here is the following:
\[\II(X,Y)=-\langle\nabla_X\nu,Y\rangle
\]
where $\nu$ is the exterior unit normal. We will use the symbol $\II$ also for
the $(1,1)$ double form on $\partialM $ determined by $\II$. 
We denote by $\Pf^g$ the Pfaffian of $g$ and by $\TPf^g$ the transgression form
on
$\partialM $ constructed from $\nabla^g$ and $d\oplus \nabla^h$ (see Example
\ref{ex1}) where $\nabla^g$ and $\nabla^h$ are the Levi-Civita connections on
$M$ and $\partialM $ respectively. We give a direct proof of the 
Allendoerfer-Weil-Gauss-Bonnet-Chern \cite{Ch2} formula for manifolds with
boundary
using the formalism of double forms.

\begin{proof}[Proof of the Gauss-Bonnet-Chern formula \eqref{eq-4}]
Let $g_1:=g$.
Using the unit geodesic flow normal to the boundary, we can write $(M,g)$
as a generalized cylinder \cite{bgm} near the boundary:
\[
g=dt^2+h(t),
\]
where $h(t)$ is a smooth family of symmetric $2$-tensors on $\pM$, and $h(0)$ is
a metric.
Take $g_0$ to be any metric which in the same product decomposition near the
boundary looks like
\[
g_0=dt^2+h(0),
\]
i.e., $g_0$ is of product type near the boundary and induces the same metric 
$h(0)$ on $\pM$ as $g$. By the Gauss-Bonnet formula for product-type metrics
(obtained by doubling the manifold for example) 
and the transgression formula \eqref{tfp}, we get
\begin{equation}\label{et}
(2\pi)^k\chi(M)=\int_M\Pf(R^{g_0})=\int_M\Pf(R^{g})
-\frac{1}{(k-1)!}\int_0^1\int_{\pM}
\cB_{g_0}\left(\dot\theta^s\wedge(R^s)^{k-1})\right).
\end{equation}

Notice  that all metrics $g_s$ coincide on $TM\bigr|_{\partialM }$. One
consequence is that all bundle isometries $\tau_s$ when restricted to
$TM\bigr|_{\partialM }$ are equal to the identity. Hence every Levi-Civita
connection $\nabla^{g_s}$ when restricted to $TM\bigr|_{\partialM }$ is equal to
$\nabla^s$ and all are metric compatible whether we refer to $g_0$ or $g$. By
Proposition \ref{P5} the integral on the boundary in \eqref{et} is in fact equal
to $\iota^*\TPf(\alpha^{\nabla})$ where $\iota^*:\partialM \ra M$ is the
inclusion and $\alpha^{\nabla}(s)=\nabla^s$. By Proposition \ref{P1} when
integrating over the boundary, it does not matter what path of connections one
takes between the first and the last connection so we might as well take the
segment.  
To complete the proof of \eqref{eq-4} we still have to identify explicitly the
transgression term from
\eqref{et}. 

First, the Berezin integrals with respect to $g$  and to $h$ at the boundary 
are related by 
\[
\cB_g(dt\wedge \mu)=\cB_h(\mu)
\]
for every form $\mu\in\Lambda^{2k-1}\pM$.

The difference $\nabla^{g_s}-\nabla^{g_0}$ is a $\End^-(TM)$-valued $1$-form.
Define $\theta^s\in\Lambda^1(\partialM )\otimes \End^-(TM\bigr|_{\partial
M})$ as the pull-back of this $1$-form to the boundary.
We claim that $\theta^s$, viewed as a $(1,2)$ double form, equals
\begin{equation}\label{eq11}
\theta^s=(1\otimes dt)\wedge s\II^g.
\end{equation}
Indeed, notice that
$\langle\nabla^{g_s}_XY,Z\rangle=\langle\nabla^{g}_XY,Z\rangle$ for all
$X,Y,Z\in T\partialM $ as $g_s\equiv h$ on $T\partialM $. Moreover
$\langle\nabla^{g_s}_X\partial_t,\partial_t\rangle=0$ for all $s$ and $X\in
T\partialM $. Hence with respect to the decomposition $TM\bigr|_{\partial
M}=\bR\partial_t\oplus T\partialM $ and the corresponding decomposition of
$\End^-(TM\bigr|_{\partialM })$, the only non-zero components of  $\theta^s$ are
off-diagonal. Then for $X,Y\in T\partialM $
\begin{equation}\label{eq10}\begin{split}
\langle\theta_{X}^s(Y),\partial_t\rangle={}&\II^{g_s}(X,
Y)= -\langle \nabla^{g_s}_X
\partial_t,Y\rangle=-\frac{1}{2}(L_{\partial_t}g_s)(X,Y)\\
={}&-\frac{s}{2}h'(0)(X,Y)=
-\frac{s}{2}L_{
\partial_t}g(X,Y)= s\II^g(X,Y).
\end{split}\end{equation}
where we used Lemma \ref{L1} in the first line. Notice that \eqref{eq10} is a
rewriting of \eqref{eq11}.

Since $\nabla^s=\nabla^0+\theta^s$ we get that
$R^s=R^0+d^{\nabla^0}\theta^s+\theta^s\circ \theta^s$ where we use the symbol
$\circ$ instead of the more popular $\wedge$ in order to distinguish it from
the product for double forms.

 On one hand, $R^0=0\oplus R^h$ with respect to $TM\bigr|_{\partial
M}=\bR\partial_t\oplus T\partialM $. Hence as $(2,2)$ forms on $\partialM $ one
has $R^0=R^h$. Second, $d^{\nabla^0}$ also respects this decomposition so
$d^{\nabla^0}\theta^s$ will be a $2$-form with non-zero values only on  the
anti-diagonal blocks of $\End^-(TM\bigr|_{\partialM })$. It follows that, when
writing $d^{\nabla^0}\theta^s$ as a double form, the second component will
always contain a $dt$. But $\dot{\theta}^s$ also contains a $dt$ in its second
component. So in $\dot{\theta}^s\wedge (R^s)^{k-1}$ this product vanishes.

We are left with turning $\theta^s\circ \theta^s$ into a double form.  If
$\{\partial_t,e_2,\ldots,e_n\}$ is an oriented orthonormal basis for $TM$ at a
point $p\in \partialM $ then at $p$, $\theta^s$ is a skew-symmetric matrix  with
non-zero terms only along the first line and the first column. In fact
$\theta^s_{1i}=s\II^g(\cdot,e_i)$, $i\geq 2$ and
 \begin{align*}
 (\theta^s\circ\theta^s)_{ij}=-s^2\II^g(\cdot,e_i)\wedge\II^g(\cdot,e_j),&& i<j.
\end{align*}
This represents the $(2,2)$ double form 
\[-s^2\sum_{2\leq i<j}\II^g(\cdot,e_i)\wedge\II^g(\cdot,e_j)\otimes e_i^*\wedge
e_j^*.\]
 On the other hand
\[\II^g\wedge\II^g=\left(\sum_{i\geq 2}\II^g(\cdot,e_i)\otimes
e_i^*\right)\wedge\left(\sum_{i\geq 2}\II^g(\cdot,e_i)\otimes
e_i^*\right)=2\sum_{2\leq i<j}\II^g(\cdot,e_i)\wedge\II^g(\cdot,e_j)\otimes
e_i^*\wedge e_j^*.
\]
Hence $\theta^s\circ \theta^s=-\frac{s^2}{2}\II^g\wedge\II^g$, and so
the integrand over $\partialM $ in \eqref{et} is
\begin{align*}
\lefteqn{\frac{1}{(k-1)!}\int_0^1\cB_g\left((1\otimes dt)\wedge \II^g\wedge 
\left(R^h-\frac{s^2}{2}(\II^g)^2\right)^{k-1}\right)~ds=}\\
{}&=\frac{1}{(k-1)!}
\cB_h\left(\sum_{j=0}^{k-1}{\binom{k-1}{j}}
\frac{(-1)^j}{2^j}\frac{1}{2j+1}({\II^g})^{2j+1}\wedge
(R^h)^{k-1-j}\right).\qedhere
\end{align*}
\end{proof}

The next simple Lemma is quite well-known, and will be widely used in this
article.
\begin{lem}\label{L1} Let $TM$ be endowed with a metric $G$ and corresponding
Levi-Civita connection $\nabla$. Let $X\in \Gamma(TM)$ be a vector field such
that $X^{\sharp}$ is a closed $1$-form (e.g. if $X$ is gradient). Then
\[ G(\nabla_YX,Z)=\tfrac{1}{2}(L_XG)(Y,Z).
\] 
\end{lem}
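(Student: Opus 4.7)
The plan is to expand the Lie derivative $L_X G$ using the standard Cartan-type identity and then use the two defining properties of the Levi-Civita connection (metricity and torsion-freeness) to rewrite everything in terms of $\nabla X$.

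First I would recall that for any tensor,
\[
(L_X G)(Y,Z)=X\bigl(G(Y,Z)\bigr)-G([X,Y],Z)-G(Y,[X,Z]).
\]
Metricity of $\nabla$ turns the first term into $G(\nabla_X Y,Z)+G(Y,\nabla_X Z)$, and torsion-freeness lets me write $[X,Y]=\nabla_X Y-\nabla_Y X$ and $[X,Z]=\nabla_X Z-\nabla_Z X$. After cancellation this collapses to the symmetric expression
\[
(L_X G)(Y,Z)=G(\nabla_Y X,Z)+G(\nabla_Z X,Y).
\]
This step is essentially algebraic and holds for \emph{every} vector field $X$, with no hypothesis on $X^\sharp$.

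The second and decisive step is to use the hypothesis $dX^\sharp=0$. Since $\nabla$ is torsion-free,
\[
dX^\sharp(Y,Z)=(\nabla_Y X^\sharp)(Z)-(\nabla_Z X^\sharp)(Y)=G(\nabla_Y X,Z)-G(\nabla_Z X,Y),
\]
so the closedness of $X^\sharp$ is equivalent to the self-adjointness of the endomorphism $Y\mapsto \nabla_Y X$. Substituting $G(\nabla_Z X,Y)=G(\nabla_Y X,Z)$ into the previous display yields the claimed identity $G(\nabla_Y X,Z)=\tfrac12 (L_X G)(Y,Z)$.

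There is no real obstacle here; the lemma is a direct consequence of the two defining properties of the Levi-Civita connection together with the definition of the exterior derivative applied to $X^\sharp$. The gradient case follows because $X^\sharp=df$ is automatically closed.
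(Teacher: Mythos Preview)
Your proof is correct and essentially the same as the paper's. The paper states the identity $2G(\nabla_Y X,Z)=(L_X G)(Y,Z)+dX^\sharp(Y,Z)$ in one line by invoking the Koszul formula, whereas you obtain its symmetric and antisymmetric halves separately from metricity/torsion-freeness and from the formula for $dX^\sharp$; adding your two displays reproduces exactly the paper's identity, and both arguments finish by setting $dX^\sharp=0$.
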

\begin{proof} 
Directly from the Koszul formula one has
\[2G(\nabla_YX,Z)=(L_XG)(Y,Z)+dX^\sharp(Y,Z).
\]
By hypothesis the second term vanishes.
\end{proof}
\begin{rem}\label{s1fr} Not only  that the integral over $\partialM $ of
$\TPf^g$ equals the integral on $\partialM $ of the right hand side of
\eqref{et} but the integrands themselves coincide. This is because the
Levi-Civita connection for $g_s=(1-s)g_0+sg$, when restricted to
$TM\bigr|_{\partialM }$ coincides with $(1-s)\nabla^{g_0}+s\nabla^{g}$. This
follows from  $g_s\equiv g_0$ on $TM\bigr|_{\partialM }$ for all $s$ and from
the Koszul formula which always gives:
\[ \langle \nabla^{g_s}_XY,Z \rangle_{g_s}=(1-s)\langle
\nabla^{g_0}_XY,Z\rangle_{g_0}+s\langle \nabla^{g_1}_XY,Z \rangle_{g}.
\]
\end{rem}
\begin{rem}  Let 
\[\frac{\cB_h\left((R^h)^j\wedge \II^{2k-1-2j}\right)}{j!
(2k-1-2j)!}=:\mathcal{G}_{j,2k-1}^h.\]
 Then the integral of the transgression form has the following {\ae}sthetically
pleasing form
 \[\sum_{j=0}^{k-1}(-1)^{j}(2j-1)!!\int_{\partialM }\mathcal{G}_{k-1-j,2k-1}^h.
 \]
\end{rem}

\begin{example} \label{Exemp1} The Gauss-Bonnet formula \ref{eq-4} applied to
the
unit disk $D^{2n}\subset \bR^{2n}$ anticipates that
\[\frac{1}{(2\pi)^n} \int_{S^{2n-1}}\TPf^g=-1.
\]
The sphere is oriented with the outer normal first convention. We compute the
right hand side of \eqref{eq-4} to check this. On one hand, $\II=-h$, where $h$
is the round metric. On the other hand, Gauss equation gives
$0=R^h-\frac{1}{2}\II\wedge\II$, hence 
\[\cB_h((R^h)^{j}\wedge
\II^{2k-2j-1})=-\frac{1}{2^j}\cB_h(h^{2k-1})=-\frac{1}{2^j}(2k-1)!\vol_{h}.\]
Using that $\vol{(S^{2k-1})}=\frac{2\pi^k}{(k-1)!}$ we get
 \begin{align*} 
 \lefteqn{\sum_{j=0}^{k-1}c(j,k)\int_{S^{2n-1}}\cB_h((R^h)^{j}\wedge
\II^{2k-2j-1})}
\\ {}&=-\sum_{j=0}^{k-1}\frac{(-1)^{k-1-j}}{
2^{k-1-j}}\frac{1}{j!}\frac{1}{(k-1-j)!}\frac{1}{2k-2j-1}\frac{(2k-1)!}{2^j}\vol
{(S^{2k-1})}\\ 
{}&=-\frac{(2k-1)!}{2^{k-1}}\frac{2\pi^k}{[(k-1)!]^2}\sum_{j=0}^{k-1}
\frac{(-1)^j\binom{k-1}j}{2j+1}.
\end{align*}
 Notice  that
  \[\sum_{j=0}^{k-1}\frac{(-1)^j{\tbinom{k-1}j}}{2j+1}
=\int_0^1(1-x^2)^{k-1}~dx=\int_0^{\pi/2}(\cos\theta)^{2k-1}
~d\theta=\frac{2^{2k-2}[(k-1)!]^2}{(2k-1)!}.\]
  Hence
  \[\frac{1}{(2\pi)^k}\sum_{j=0}^{k-1}c(j,k)\int_{S^{2n-1}}\cB_h((R^h)^{j}\wedge
\II^{2k-2j-1})=-1.
  \]
\end{example}
\begin{rem} The integrand in \eqref{eq-4} on $\partialM $ coincides with Chern's
integrand \cite{Ch1}. Chern's transgression, which lives on the spherical bundle
$SM$, can be written (see for example \cite{W}) as\footnote{The negative sign in
front of the sum
is there so that $d\Pi=\pi^*\Pf^g$.}
\begin{align}\label{eqAi} 
\Pi:= -\sum_{j=0}^{k-1}a_iA_i,&&a_i=[(2\pi)^{k}i!(2k-2i-1){!}{!}]^{-1},&&
A_i=(\pi^*\mathcal{R})^i\wedge I\wedge
(DI)^{2k-2i-1}.
\end{align}
In \eqref{eqAi}, $\mathcal{R}$ is the curvature form on $M$,  $I:SM\ra
\pi^*TM$ is the tautological section seen as a $0$-form on $SM$ with values in
$\pi^*TM$ and $DI=(\pi^*\nabla)I$ is the covariant derivative seen as a $1$-form
with values in $\pi^*TM$. Hence one works in the algebra of forms on $SM$ with
values in $\Lambda^*\pi^*TM$. Wedging
with $I$ kills the normal component in any product
$DI^{2k-2h-1}$ and also in $(\pi^*\mathcal{R})^i$.

Given a hypersurface $N$ oriented by the normal $\nu$ one has that
$\nu^*(I\wedge DI)=\nu\wedge\nu^*(DI)$ actually equals $-\nu \wedge\II_N$
where
$\II_N:TN\ra TN$ is the second fundamental form seen as the endomorphism
$-\nabla \nu$.
Moreover $\nu^*\mathcal{R}$ is the tangential component of the curvature tensor
of $M$ restricted to $N$. Let $\II:=\II_N$ and $R^N$ the curvature form on
$N$.  Gauss Equation gives
 \[ \nu^*{\mathcal {R}}=R^N-\frac{1}{2}\II\wedge\II.
 \]
 Therefore
 \[-\nu^*(A_i)=(R^N-1/2{\II}\wedge {\rm II})^i\wedge\nu \wedge
\II^{2k-2i-1}
 \]
 and we must check that
 \begin{align*} 
 \nu^*\Pi= {}&\sum_{i=0}^{k-1}\sum_{j=0}^i\frac{1}{i! \cdot 1\cdot 3 \ldots
\cdot
(2k-2i-1)}\frac{(-1)^j{\binom{i}{j}}}{2^j}
\II^{2k-2(i-j)-1}(R^N)^{i-j}\\
={}&\sum_{j=0}^{k-1}\frac{{\binom{k-1}{j}}}{(k-1)!}
\frac{1}{2j+1}\frac{(-1)^j}{2^j}\II^{2j+1}(R^N)^{k-1-j}=:\TPf^N,
 \end{align*}
This equality follows from the elementary identity of double factorials
\[\sum_{j=0}^p(-1)^j\frac{(2p)!!}{(2j)!!(2p-2j+1)!!}=\frac{(-1)^p}{2p+1}.
\]
\end{rem}

\section{Conical manifolds}\label{conman}

Let $N$ be a compact oriented manifold, possibly disconnected. A \emph{conical
singularity} modeled on $N$ is a
Riemannian 
metric on $(-\epsilon,0)\times N$ of the form
\begin{equation}\label{modelcone}
g_c=dr^2\oplus f^2(r)\cdot h(r)
\end{equation}
where $h(r)$ is a smooth family of Riemannian metrics on $N$ down to $r=0$, and
$f:(-\epsilon,0]\ra [0,\infty)$ is a function with the following properties
\begin{itemize}
\item[(i)] $f$ is smooth on $(-\epsilon,0)$;
\item[(ii)] $f$ vanishes only at $0$;
\item[(iii)] $f$ is $C^1$ at $0$. 
\end{itemize} 
Notice that, as a consequence of the hypotheses, $f'(0)\leq 0$. 
\begin{definition}
When $h(r)\equiv h$ is constant and $f(r)=-\theta r$ with $\theta>0$ we call the
conical singularity a \emph{geometric cone} of inclination $\theta$.
\end{definition}
The smoothness at $r=0$ of $h(r)$ needs to be emphasized. 
There are two equivalent formulations for this property:
\begin{enumerate}
\item The metric $dr^2\oplus h(r)$ is the restriction to $(-\epsilon,0)\times N$
of a
smooth metric
on $(-\epsilon,\epsilon)\times N$;
\item The family $(-\epsilon,0)\ni r\mapsto h(r)\in C^\infty(N,T^*N\otimes
T^*N)$ 
has a limit at $r=0$ together with all its derivatives in $r$.
\end{enumerate}
\begin{definition} \label{dvp} An oriented manifold with conical-type
singularities is a Riemannian manifold $(M,g)$ such that there exists a compact
set $K$ and an orientation preserving diffeomorphism $\varphi: M\setminus
K\simeq (-\epsilon,0)\times N$ such that on $M\setminus K$:
\[ g=\varphi^*g_c.
\]
\end{definition}

We now define some polynomials in the curvature of a Riemannian manifold
$(N,h)$ of dimension $n$ using the Berezin integral $\cB_h$ where
 \[h:=h(0).\]

\begin{definition}\label{LKcu} The Lipschitz-Killing curvature (see \cite{La} or
\cite{Mo}) of level $j$ is, up to a normalization constant, the following
form of degree $n$ on $N$:
\[ P_{j,n}(h)=\frac{1}{j!(n-2j)!}\cB_h\left((R^h)^{j}\wedge h^{n-2j}\right).
\]
\end{definition}
Like the Pfaffian, in any orthonormal base the form $P_{j,n}$ is a polynomial
with 
integral coefficients in the components of $R^h$. The Lipschitz-Killing
curvatures 
are familiar objects and they appear in Weyl's tube formula. 

\begin{example} Here are a few examples of Lipschitz-Killing curvatures:
 \begin{align*}P_{0,n}(h)=\vol_h,&& P_{1,n}(h)=\frac{1}{2}\scal_h\cdot\vol_h,&&
P_{k,2k}=\Pf(R^h).
\end{align*}
\end{example}
\begin{rem}  Let $\tilde{N}:=(-\epsilon,0)\times N$ be a  geometric cone of
inclination $c>0$. Then the transgression form for each slice $\{r\}\times N$
does not depend on $r$. Indeed the Levi-Civita connection and the cylindrical
connection obtained from it are the same for $T\tilde{N}\bigr|_{\{r\}\times N}$
irrespective of $r$. Denote this transgression form by $\TPf(N,h,c)$. {For the 
inclination $c=0$, set $\TPf(N,h,c)=0$.}
\end{rem}
We prove now the main result of this section.

\begin{theorem} \label{Tconca}
Let $(M^{2k},g)$ be an oriented manifold with conical-type singularities
modeled on a possibly disconnected manifold $N$ with induced metric $h$. 
Then
\begin{align}\label{GBcs1}
(2\pi)^k \chi(M) =& \int_{M}\Pf^g-\int_{N}\TPf(N,h,-f'(0))\\\label{GBcs2}
 =& \int_M\Pf^g 
+ \sum_{j=0}^{k-1}[f'(0)]^{2k-2j-1}\tilde{c}(k-1-j)\int_N
P_{j,2k-1}(h)
\end{align}
with
\begin{equation}
\label{tildec}\tilde{c}(l)=(-1)^{l}\cdot(2l-1)!!.
\end{equation}
\end{theorem}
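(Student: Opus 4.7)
The plan is to truncate the conical end, apply the Gauss-Bonnet-Chern formula \eqref{eq-4}, and pass to the limit as the truncation recedes. For small $\delta>0$, set $M_\delta := M\setminus \varphi^{-1}((-\delta,0)\times N)$, a compact smooth manifold with boundary $\partial M_\delta \cong \{-\delta\}\times N$. The flow of $\partial_r$ exhibits the collar $(-\delta,0]\times N$ as a deformation retract onto $\{-\delta\}\times N$, so $M$ deformation-retracts onto $M_\delta$ and $\chi(M)=\chi(M_\delta)$.

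Applying \eqref{eq-4} on $M_\delta$ produces a formula of the shape
\[
(2\pi)^k\chi(M) \;=\; \int_{M_\delta}\Pf^g \;+\; \sum_{j=0}^{k-1}\gamma_{j,k}\int_{\{-\delta\}\times N}\cB_{h_\delta}\!\left((R^{h_\delta})^j\wedge \II^{2k-1-2j}\right),
\]
with explicit constants $\gamma_{j,k}$ read off from \eqref{eq-4} and $h_\delta := f(-\delta)^2\,h(-\delta)$ the induced metric on the slice. Because $\vol_g = f^{2k-1}\,dr\wedge \vol_{h(r)}$ with $f$ vanishing linearly at $r=0$, the standard warped-product identities make $\Pf^g$ an $L^1$-function on the collar and thus $\int_{M_\delta}\Pf^g \to \int_M \Pf^g$ as $\delta\to 0^-$.

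The heart of the proof is the $\delta\to 0^-$ analysis of the boundary integrand. Applying Lemma \ref{L1} to the closed $1$-form dual to $\partial_r$ gives, as a $(1,1)$-double form on $N$,
\[
\II \;=\; -ff'\,h(-\delta)\;-\;\tfrac{f^2}{2}\,h'(-\delta), \qquad f := f(-\delta).
\]
Since $f$ is constant along $N$, one has $R^{h_\delta} = f^2 R^{h(-\delta)}$ as $(2,2)$-double forms, and $\cB_{h_\delta} = f^{-(2k-1)}\cB_{h(-\delta)}$ on forms whose second slot has top degree. Expanding $\II^{2k-1-2j}$ by the binomial theorem, the summand with $m$ copies of $h'$ carries a factor $f^{(2k-1-2j)+m}$; combined with $f^{2j}$ from $(R^{h_\delta})^j$ and $f^{-(2k-1)}$ from $\cB_{h_\delta}$, the net weight is $f^m$. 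Only the $m=0$ term survives in the limit, yielding
\[
\cB_{h_\delta}\!\left((R^{h_\delta})^j\wedge \II^{2k-1-2j}\right) \;\xrightarrow[\delta\to 0^-]{}\; (-f'(0))^{2k-1-2j}\,\cB_h\!\left((R^h)^j\wedge h^{2k-1-2j}\right).
\]
The same scaling analysis performed on any slice $\{r_0\}\times N$ of the pure geometric cone $dr^2\oplus c^2 r^2 h$ with $c = -f'(0)$ reproduces the identical expression $r_0$-independently, confirming the Remark preceding the theorem and identifying the limiting boundary integral with $\int_N\TPf(N,h,-f'(0))$; this is \eqref{GBcs1}. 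Inserting $\cB_h((R^h)^j\wedge h^{2k-1-2j}) = j!(2k-1-2j)!\,P_{j,2k-1}(h)$ into \eqref{eq-4} and simplifying the double-factorial coefficients via $(2k-2j-1)! = (2k-2j-1)!!\cdot 2^{k-1-j}(k-1-j)!$ yields \eqref{GBcs2} with $\tilde c(k-1-j)$ as in \eqref{tildec}.

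The main technical obstacle is the scaling bookkeeping just sketched: the divergence $f^{-(2k-1)}$ produced by $\cB_{h_\delta}$ is matched precisely by the $f^{2k-1}$ emerging from the leading contribution $f^{2j}\cdot(-ff'h)^{2k-1-2j}$, while every correction involving $h'$ carries at least one additional power of $f$ and therefore decays. A secondary point is the $L^1$-integrability of $\Pf^g$ near the singularity, which follows from writing $\Pf^g$ as a polynomial in the components of $R^g$ expressed in a $g$-orthonormal frame adapted to the warped product $dr^2\oplus f^2 h(r)$, whose components remain uniformly bounded as $r \to 0^-$.
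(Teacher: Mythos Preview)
Your proof follows essentially the same route as the paper's: truncate to $M_\delta$, apply the Gauss--Bonnet formula \eqref{eq-4}, compute the second fundamental form of the slice via Lemma \ref{L1}, and exploit the scalings $R^{h_\delta}=f^2R^{h(-\delta)}$ and $\cB_{h_\delta}=f^{1-2k}\cB_{h(-\delta)}$ to see that only the $m=0$ term in the binomial expansion of $\II^{2k-1-2j}$ survives as $\delta\to 0$. The identification with the geometric-cone transgression for \eqref{GBcs1} is also the same.

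There is, however, a genuine gap in your justification that $\int_{M_\delta}\Pf^g\to\int_M\Pf^g$. The claim that the components of $R^g$ in a $g$-orthonormal frame remain uniformly bounded is false: for $dr^2\oplus f^2h$ the purely tangential components satisfy
\[
R_{ijkl}=f^{-2}\bigl(R^h_{ijkl}-f'^2(\delta_{ik}\delta_{jl}-\delta_{il}\delta_{jk})\bigr)
\]
and blow up unless $h$ has constant sectional curvature $f'(0)^2$. More to the point, the curvature $2$-forms $\Omega_{0i}$ carry a factor $f''$ (already in the $2$-dimensional case $\Pf^g=-f''\,dr\wedge d\theta$), and since $f$ is only assumed $C^1$ at $0$, $f''$ need not be in $L^1$ near $r=0$; take for instance $f(r)=-r+r^3\sin(1/r)$. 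So $\Pf^g$ is \emph{not} $L^1$ in general. The paper sidesteps this entirely: once the boundary transgression is shown to converge and $\chi(M_\delta)$ is constant, the convergence of $\int_{M_\delta}\Pf^g$ (as an improper integral, which is how $\int_M\Pf^g$ must be read here) follows automatically. You should reorganize the argument accordingly---establish the limit of the boundary term first, and deduce convergence of the interior Pfaffian integral as a consequence rather than asserting it as a premise.
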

\begin{proof} For each $r\in(-\epsilon,0)$, let $M_r$ be the complement of
$\varphi^{-1}((r,0)\times N)$. It is a compact manifold with boundary and
therefore \eqref{eq-4} applies to it:
\[ (2\pi)^k\chi(M_r)=\int_{M_r}\Pf^g-\int_{\partialM _r}\TPf^g.
\]
Clearly all $M_r$ are homotopic to each other so the left hand side does not
change with $r$. We will show  that 
\[ \lim_{r\ra 0}\int_{\partial
M_r}\TPf^g=-\sum_{j=0}^{k-1}[f'(0)]^{2k-1-2j}\tilde{c}(k-1-j)\int_{N}P_{j,2k-1}
(h).\]
This will also prove the convergence of $\int_{M_r}\Pf^g$ when $r\ra 0$. 
(In Section \ref{Sec8} we prove the stronger statement that
$\Pf^g$ is a smooth form on $M$, including at the boundary.)

The first observation is that the Levi-Civita connection $\partialM _r$ with the
metric $h_1(r):=f(r)^2h(r)$ is the same as the Levi-Civita connection for the
metric $h(r)$, hence as operators 
\[ R^{h_1(r)}={f^2}(r)R^{h(r)}\]
due to the metric dependence of the identification $\End^-(V)\simeq \Lambda^2
V^*$.

One is left computing the evolution of $\II^r$ for $\partialM _r$. Since
$\partial_r$ is a gradient vector field we apply Lemma \ref{L1} again:
\[ \II^r(X,Y)=-\langle \nabla^g_X\partial_r,Y \rangle=-\frac{1}{2}L_{\partial
r}(dr^2+f^2(r)h(r))=-[f'(r)f(r)h(r)+\frac{f^2(r)}{2}h'(r)].
\]
We also have $\cB_{h_1(r)}=\left(f(r)\right)^{1-2k}\cB_{h(r)} $ and so
\[ \cB_{h_1(r)}\left((R^{h_1(r)})^j\wedge
(\II^r)^{2k-1-2j}\right)=-f'(r)^{2k-1-2j}\cB_{h(r)}\left((R^{h(r)})^{j}\wedge
h(r)^{2k-1-2j}\right)+o(f(r)).
\]
Multiply this with $c(j,k)=\frac{\tilde{c}(k-1-j)}{j!(2k-1-2j)!}$, 
take the sum in $j$ and the limit $r\ra 0$ to get \eqref{GBcs2}.

To see that  \eqref{GBcs1} is true, recall (for example Remark \ref{s1fr}) that
$\int_{N}\TPf(N,h,c)$ can be computed also as a sum of integrals over $N$ of
products $\II^{2k-1-2j}\wedge R^j$ where $\II$ and $R$ are the second
fundamental form respectively the curvature form of a slice of a geometric
cone. But for such a geometric cone, $\II$ is a multiple of the metric,
and the computations go as before. 
\end{proof}

We notice thus that for an odd-dimensional manifold the total Lipschitz-Killing
curvatures can be recovered as coefficients of the integral of a certain
transgression. We state this separately.

\begin{cor}\label{cor1} For a geometric cone modeled  on $(N,h)$   of
inclination $\theta$ with $\dim{N}=n$,  $n$ odd, the following holds:
\begin{align*}
\int_{N}\TPf(N,h,\theta)=\sum_{j=0}^{(n-1)/2}\theta^{n-2j}\tilde{c}\left(
\tfrac{n-1}{2}-j\right)\int_{N}P_{j,n}(h).
\end{align*}
\end{cor}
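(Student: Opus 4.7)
The plan is to read the corollary off of Theorem \ref{Tconca} by specializing it to a geometric cone. The key observations are already in place: the remark preceding Theorem \ref{Tconca} defines $\TPf(N,h,\theta)$ as the $r$-independent transgression form on any slice of the geometric cone, and Theorem \ref{Tconca} furnishes two expressions, \eqref{GBcs1} and \eqref{GBcs2}, for $(2\pi)^k\chi(M)$ on any compact $M^{2k}$ with such a conical singularity. No genuinely new computation is needed; the content of the corollary is essentially extracted from the proof of Theorem \ref{Tconca}.

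First I would construct a compact oriented manifold $M^{2k}$ having a single conical-type singularity of the prescribed type: extend the cone $(-\epsilon,0)\times N$ with its metric $dr^2\oplus\theta^2 r^2 h$ smoothly past $r=-\epsilon$ and cap it off at some $r=-\delta$, producing a closed pseudomanifold with one conical singularity. Because the correction term to $\int_M\Pf^g$ depends only on the germ of the metric at $r=0$, the specific choice of cap is immaterial.

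Next I would equate the right-hand sides of \eqref{GBcs1} and \eqref{GBcs2}. Cancelling $\int_M\Pf^g$ yields
\[
\int_N \TPf(N,h,-f'(0)) \;=\; -\sum_{j=0}^{k-1}[f'(0)]^{2k-2j-1}\,\tilde{c}(k-1-j)\int_N P_{j,2k-1}(h).
\]
For the geometric cone, $f(r)=-\theta r$, so $-f'(0)=\theta$ and, since the exponent $2k-2j-1$ is odd, $[f'(0)]^{2k-2j-1}=(-\theta)^{2k-2j-1}=-\theta^{2k-2j-1}$. The two negative signs cancel, producing
\[
\int_N \TPf(N,h,\theta) \;=\; \sum_{j=0}^{k-1}\theta^{2k-2j-1}\,\tilde{c}(k-1-j)\int_N P_{j,2k-1}(h).
\]
Setting $n=2k-1$, so that $\tfrac{n-1}{2}-j=k-1-j$ and $n-2j=2k-2j-1$, transcribes this identity into the form stated in the corollary.

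The main obstacle, such as it is, is bookkeeping: one must track the two minus signs (the one obtained from equating the two formul\ae{} of Theorem \ref{Tconca} and the one produced by the odd power of $f'(0)$) and verify that they combine into the positive sign appearing in the statement. The construction of $M$ is routine, and every other step is formal manipulation of the identities furnished by Theorem \ref{Tconca}.
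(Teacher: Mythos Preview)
Your approach matches the paper's: its one-line proof ``The function is in this case $f(r)=-\theta r$'' amounts to equating \eqref{GBcs1} and \eqref{GBcs2} and cancelling $\int_M\Pf^g$, exactly as you propose, and your sign bookkeeping is correct. The only unnecessary step is constructing a compact $M$ by capping off (which would require $N$ to be null-cobordant): the paper avoids this because the equality of the two boundary contributions in \eqref{GBcs1}--\eqref{GBcs2} is already established in the last paragraph of the proof of Theorem~\ref{Tconca} by a direct local computation of $\int_N\TPf(N,h,c)$ on the geometric cone, with no reference to any ambient $M$.
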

\begin{proof} The function is in this case $f(r)=-\theta r$.
\end{proof}
\begin{rem} \label{nrem} The odd Pfaffian from the Introduction can in fact be seen
as a transgression:
\[ \oPf(h)=\TPf(N,h,1).
\]
Notice that in the case when $N=S^{2n-1}$ with the
round metric we get:
\[ \int_{S^{2n-1}}\TPf(S^{2n-1},\round,1)=(2\pi)^n.
\]
One can compare this with Example \ref{Exemp1}. The difference in sign has to
do with the fact that  $S^{2n-1}$ seen as a geometric cone is oriented with the
inner normal first since that is the direction of $\partial_r$ that points
towards the "singularity".
\end{rem}

\begin{rem} We can construct a manifold with boundary $\tilde{M}:=M\cup
(-\epsilon,0]\times N/\sim$ where the identification is made via the
diffeomorphism $\varphi$ of Definition \ref{dvp} in an obvious way. The
degenerated conical metric $g$ induces a pseudo-distance on $\tilde{M}$ in which
the  (pseudo) distance between any two points on $\partial\! \tilde{M}$ is
zero. 
Collapsing the boundary of $\tilde{M}$ to a point gives a metric space $\hat{M}$
which is homeomorphic to the one point compactification of $M$. Then
\[ \chi(\hat{M})=1+\chi(M).
\]
If the singular space $\hat{M}$ is the focus of the analysis, then we can say
that the singularity, or the point at $\infty$ contributes to the Euler
characteristic with  the quantity
\[
1+\frac{1}{(2\pi)^k}\sum_{j=0}^{k-1}f'(0)^{2k-1-2j}\tilde{c}(k-1-j)\int_NP_{j,
2k-1}(h).
\]
\end{rem}
\begin{example}
In the case $k=1$  the contribution of the singularity is (recall that
$f'(0)\leq 0$, $\tilde{c}(0,1)=1$)
\[ 1+ \frac{f'(0)}{2\pi}\length_h(N).
\]
This fits with two opposite examples. The first is a closed surface $S$
embedded in $\bR^3$ with a cuspidal singularity. Then $f'(0)=0$. The
geometric contribution to the Euler characteristic of  the cusp is $1$ which is
the area of the half unit sphere divided by $2\pi$. The half unit sphere is the
normal cycle of the cusp, or the solid angle described by the variation of a
unit normal to each surface of a family of smooth surfaces contained in the
bounded region of $S$ and converging to $S$. 

The other example is when $N=S^1$ with the round metric and $f'(0)=-1$. Then
$\hat{M}$ is a closed surface with smooth metric (see \cite{Pe} p.\ 13, Prop.1)
and the
contribution of the removable singularity vanishes, recovering Gauss-Bonnet for
$\hat{M}$ in this case.
\end{example}

\section{Edge manifolds: the model metrics} \label{Emmm}
Let $N$ be an $n$-dimensional closed, oriented manifold. Assume $\pi:N\ra B$ is
a locally trivial fiber bundle with vertical bundle $\VN $ and suppose $\pi$ is
endowed with an Ehresmann connection $\mathcal{E}\in \Hom(TN,\VN  )$ that
induces a
decomposition
\[ TN=\VN  \oplus \pi^*TB.
\]
An (incomplete) edge singularity modeled on $(N,\pi,\mathcal{E})$ is a metric on
$(-\epsilon,0)\times N$ of the type $dr^2\oplus r^2g^V\oplus \pi^*g^B$
where $g^V$ and $g^B$ are metrics on $\VN  $ and $TB$ respectively. More
generally, a \emph{model edge metric} will be any metric of type:
\[ g_e=dr^2\oplus r^2g^{V}(r)\oplus \pi^*g^B
\]
where $g^{V}(r)$ is a smooth family of metrics down to $r=0$.
We set:
\begin{align}\label{gN} g^{V}:=g^{V}(0),&& g^N:=g^{V}\oplus \pi^*g^B.
\end{align}
The Levi-Civita connection $\nabla^N$ of the metric $g^N$ on $N$ induces a
connection $\nabla^{\VN  }$ on $\VN  $ via $\mathcal{E}\nabla^N \mathcal{E}$. We
will call it the orthogonal projection.
Clearly $\nabla^{\VN  }$ restricted to each fiber $N_b$ is the Levi-Civita
connection of that fiber for the metric $g^V$.

\begin{definition} A manifold with edge singularities is a smooth manifold $M$
with a Riemannian metric $g$ such that there exists a compact set $K$ and a
diffeomorphism $\varphi:M\setminus K\ra (-\epsilon,0)\times N$, such that on
$M\setminus K$:
\[ g=\varphi^*g_e.
\] 
\end{definition}

\begin{proof}[Proof of Theorem \ref{Theorem1}] 
As in the conical case, the Euler characteristic of
$M_r$ is constant and equal to $\chi(M)$. So it is enough to prove the
convergence of the integrals of transgression forms in \eqref{eq-4} for the
slices $\partialM _r\simeq\{r\}\times N $.

We will use the following terminology for double forms of type $(2,2)$ on $N$.
A form is called (purely) horizontal if its second component belongs to
$\Gamma(\pi^*\Lambda^2T^*B)$. It is called (purely) vertical its second
component belongs to $\Gamma(\Lambda^2V^*N)$.  It is a mixed form if its second
component belongs to $\Gamma(\pi^*T^*B\otimes V^*N\oplus V^*N\otimes
\pi^*T^*B)$. Clearly every $(2,2)$ form can be written as a sum of a purely
horizontal, a purely vertical and a mixed form.

The technical part of the proof is to decompose the curvature form of the
slice $\{r\}\times N$ for the metric $g_r:=r^2g^{V}(r)\oplus \pi^*g^B$ into
its horizontal, vertical and mixed components.  This is the object of
Proposition
\ref{Le2} below, according to which the curvature $F(\nabla^{g_r})$ for the
slice $\{r\}\times N$ with metric $r^2g^{V}(r)\oplus \pi^*g^B$ decomposes as
follows:
\[ F(\nabla^{g_r})=(A_0+A_2r^2+A_4r^4)+r^2(C_2+r^2C_4)+r^2
(D_2+r^2D_4)=X(r)+r^2Y(r)
\]
where $A_0,A_2,A_4,C_2,C_4,D_2,D_4$ are geometric quantities which depend
smoothly on $r$ down to $r=0$, and are constant when $g^{V}$ is constant
in $r$. Moreover, for all $i$, $A_i$ is purely horizontal, $D_i$ is purely
vertical and $C_i$ is mixed. We have $A_0=\pi^*F(\nabla^B)$ and 
$D_2=F(\nabla^{\VN  }_r)$, and this is all we need for subsequent computations.
Then
\begin{align*}X(r):=A_0+A_2r^2+A_4r^4,&&Y(r):=C_2+D_2+r^2(C_4+D_4)
\end{align*}
is a convenient separation of the terms.

Applying Lemma \ref{L1} yet again, we conclude that
\[\II^r=-\left(rg^{V}(r)+\frac{r^2}{2}\dot{g}^{V}(r)\right)=:-rZ
\]
where $Z$ is  a vertical $(1,1)$ double form. Let $b:=\dim B$
and $f:=2k-1-b$ be the dimension of the fiber of $\pi$. 
For the Berezin integrals, one has (taking into account that $r$ is negative):
\[\cB_{g_r}(\cdot)=\frac{1}{(-r)^f}\cB_{g^N}(\cdot).
\]
Then
\[ F(\nabla^{g_r})^j\wedge (\II^r)^{2k-1-2j}=\sum_{i=0}^j{\binom{j}i}\cdot
(-r)^{2k-1-2i} X^i \wedge (Y^{j-i}Z^{2k-1-2j}).
\]
 Hence
\begin{equation}\label{eqed2} \cB_{g_r}\left(F(\nabla^{g_r})^j\wedge
\II^{2k-1-2j}\right)=\sum_{i} {\binom{j}i} (-r)^{b-2i}\cB_{g^N}\left(X^i \wedge
(Y^{j-i}Z^{2k-1-2j})\right).
\end{equation}
Notice that $X^i$ is a purely
horizontal double form of bi-degree $(2i,2i)$, hence it vanishes if $2i>b$.
On the other hand, for $2i<b$ all forms $\omega=\cB_{g^N}(X^i \wedge
(Y^{j-i}Z^{2k-1-2j}))$ have a finite limit when $r\ra 0$. Therefore only the
term
$2i=b$ survives in the sum \eqref{eqed2} when $r\ra 0$. 

In conclusion, if $b$ is odd, the limit is $0$. If $b$ is even, we get in the
limit
\[{\binom{j}{b/2}}
\cB_{g^N}\left(X(0)^{b/2}Y(0)^{j-b/2}Z(0)^{2k-1-2j}\right).
\] 
Now $Y(0)=C_2+D_2$ and $C_2$ is a mixed term. Since $X(0)^{b/2}$ is a purely
horizontal form of maximal bi-degree, wedging with it will kill all terms from
$Y(0)^{j-b/2}$ containing a
horizontal component. Hence only $D_2^{j-b/2}$ will survive.
We are left with
\[ {\binom{j}{b/2}}\cB_{g^N}\left((\pi^*F(\nabla^B))^{b/2}
F(\nabla^{\VN  })^{j-b/2}(g^V)^{2k-1-2j}\right).
\]
Multiplying with $c(j,k)$,  integrating and summing over $0\leq i:=j-b/2 \leq
(f-1)/2$ gives the result, since $k-j-1=(f-1)/2-i$.
\end{proof}

\begin{example} \label{examp1} Let $\pi: E\ra B$ be a Euclidean vector bundle 
of  rank $2k$ endowed with a metric connection $\nabla$. Then  $\pi^*\nabla$ 
and the tautological section $\tau$ determine on $SE:=\{v\in E~|~|v|=1\}$ a
transgression form $\TPf(\pi^*\nabla,\tau)$ of degree $2k-1$ with the
property:
\begin{equation}\label{trsp}
\frac{1}{(2\pi)^k}\int_{SE/B}\TPf(\pi^*\nabla,\tau)\equiv 1
\end{equation}
when the fibers of $SE\ra B$ are oriented via the interior normals. This reduces
immediately to Example \ref{Exemp1}  (see also Remark \ref{nrem}).

Suppose now that $B$ is a {compact} submanifold of a closed Riemannian manifold
$\hat{M}$, both of even dimension. The normal
bundle $\nuB $ inherits a metric which is obviously a model edge metric with
$N=S(\nuB )$. Assume for the moment that the normal exponential map induces  an
isometry $D_{\epsilon}(\nuB )\ra U$ onto a neighborhood $U$ of $B$ where
$D_{\epsilon}(\cdot)$ is the disk bundle of radius $\epsilon$. Let
$M^{\circ}:=\hat{M}\setminus B$. Then  using \eqref{trsp}, Theorem
\ref{Theorem1} together with the classical Gauss-Bonnet turns into
\begin{equation}\label{topeq} \chi(M^{\circ})=\chi(\hat{M})-\chi(B).
\end{equation}
Clearly this relation is also a topological consequence of Mayer-Vietoris for
the cover $\{M^{\circ},U\}$ of $\hat{M}$. The same identity holds when $\dim{B}$
is odd, albeit in that case $\chi(B)=0$.

It turns out that \ref{Theorem1} continues to hold {\it{ad litteram}} if $B$ is
totally geodesic recovering once again \eqref{topeq}. In the general case, we
will turn the tables around. We will see in Theorem \eqref{tvsv} that the metric
on $M^{\circ}$, which, by the way, can be seen as the interior of the oriented
blow-up of $B$ is, in a neighborhood of the boundary,  a first order
perturbation of the model metric on $(-\epsilon,0]\times S(\nu B)$. Once we will
know that a Gauss-Bonnet formula \eqref{eqintro3} holds for such perturbations,
the topological statement \eqref{topeq} will serve to conclude that the integral
over $B$ equals $\chi(B)$. 

\end{example}

\begin{example} \label{examp2} Here is a more general situation when the
integral of the
transgression form is independent of the fiber. Let $P\ra B$ be
a principal bundle with structure group $G$. Suppose $G$ acts by isometries on a
Riemannian manifold $F$. Let $N:=P\times_G F$ be the associated fiber bundle
over $B$
({This is another way of saying that the fiber bundle with fiber
$F$ has transition maps taking values in $G\subset \Isom(F)$}).
Then the
vertical bundle $\VN  $ inherits a Riemannian metric, since $\VN  \simeq
P\times_G TF$
with $G$ acting on $TF$ via the differentials of the isometries. Since $TF$ has
a metric to start with and $G$ preserves it, we obtain a metric on
$\VN $.

Any $G$-principal connection $\omega\in \Omega^1(P;\mathfrak{g})$ gives rise to
a parallel transport via isometries between the fibers of $N\ra B$. Clearly the
transgression form $\TPf(N_b,g^{N_b},1)$ of a fiber $N_b$ obtained from the
conical metric $dr^2\oplus r^2g^{N_b}$ on $(-1,1)\times N$ depends only on the
isometry class of the metric $g^{N_b}$. Therefore in the situation when all the
fibers are isometric, the integral will be constant.
\end{example}

\begin{rem} One might ask what happens when $\dim{M}=2k+1$ is odd with an edge
singularity. If we look at $M_r$ which is a compact manifold of odd dimension
with boundary then by Lefschetz Duality one gets that
$\chi(M_r)=\frac{1}{2}\chi(\partialM _r)$. 

Now, $\chi(\partialM _r)=\int_{\partialM _r}\Pf(\nabla^{g_r})$ is constant with
respect to $r$. If one uses as above the decomposition of $F(\nabla^{g_r})$ into
its horizontal, mixed and vertical components then for  ${B}$ even dimensional 
one gets
\[\chi(N)= \lim_{r\ra 0}\frac{1}{(2\pi)^k(2k)!}\int_{\partialM _r}
F(\nabla^{g_r})^{2k}=\int_{B}\Pf(g^B)\int_{N/B}\Pf(g^V)=\chi(B)\chi(F)\]
while for odd  $\dim{B}$ one gets zero. We recover thus a Riemannian-geometric
proof of the multiplicativity of Euler characteristic in fibrations.
\end{rem}

\subsection{The curvature form of a Riemannian submersion}

In order to completely describe the decomposition of the curvature form
$F(\nabla^{g_r})$ into its vertical, horizontal and mixed components, 
we set $u:=r^2$, and consider the adiabatic deformation 
of the metric on $N$:
\[ h_u:=g_u^{V}\oplus u^{-1}\pi^*g^B.
\]
In this section we are interested in $uh_u$ but then in terms of curvature
\emph{forms}  one has:
\[ F{(\nabla^{uh_u}})=uF(\nabla^{h_u})
\]  
since the Levi-Civita connection of $uh_u$ and $h_u$ are the same. The reason
for working with $h_u$ is that we can make use of the results of \cite{BGV}, Ch.
10.
 
To begin with, let us notice that the family of vertical connections
$\nabla^{\VN }(u)$
resulting from the projections of the Levi-Civita connections $\nabla^{h_u}$ has
a limit $\nabla^{\VN }(0):=\displaystyle\lim_{u\ra 0}\nabla^{\VN }(u)$
and this limit is the projection of the Levi-Civita connection of $g^N$ (see
(\ref{gN})) onto
$\VN $.
This follows from the Koszul formula (see also Prop. 10.2 in \cite{BGV}).

Define, using the Ehresmann connection, the following family of connections on
$TN\ra N$:
\[ \nabla^{\oplus}_u:=\nabla^{\VN }(u)\oplus
\pi^*\nabla^B\longrightarrow \nabla^{\oplus}:=\nabla^{\VN }\oplus
\pi^*\nabla^B.\] 
\begin{rem} One should not confuse $\nabla^{\HN }$, the result of
projecting $\nabla^{h_u}$ onto $\HN $, with $\pi^*\nabla^B$.
\end{rem}

For $u\neq0$, let
$\tau_u:\Lambda^2T^*N\ra\End^-(TN)$ be the bundle morphism:
\[
\tau_u(\omega_1\wedge\omega_2)(\xi)=\omega_2(\xi)\omega_1^{\sharp_u}
-\omega_1(\xi)\omega_2^{\sharp_u}.
\]
The notation $\sharp_u$ represents the $h_u$-metric dual. Notice that $\tau_u$
is the inverse of 
\begin{align*}(\tau_u)^{-1}:\End^-(TN)\ra \Lambda^2T^*N,&&
(\tau_u)^{-1}(A)(\xi_1,\xi_2)=h_u(\xi_1,A\xi_2).
\end{align*}
We can write (see Prop. 10.6 in \cite{BGV}):
\[\nabla^{h_u}-\nabla^{\oplus}_u=\tau_u(\omega_u)
\]
for $u\neq 0$, where $\omega_u:TN\ra \Lambda^2T^*N$ is defined by
\begin{align*}
\omega_u(X)(Y,Z)=\hat{S}_u(X,Y,Z)-\hat{S}_u(X,Z,Y)-\hat{\Omega}_u(X,Z,Y)+\hat{
\Omega}_u(X,Y,Z)-\hat{\Omega}_u(Y,Z,X).
\end{align*}
We recall the definitions of  $\hat{S}_u$ and $\hat\Omega_u$ (both differ by a
sign compared with Section 10.1 in \cite{BGV}):
\begin{align*}
\hat{\Omega}_u\in \Gamma(\HN ^*\otimes \HN ^*\otimes \VN ^*),&&
\hat{\Omega}_u(X,Y,Z)={}&\frac{1}{2} g^V_u([X,Y]^{v},Z),\\
\hat{S}_u\in\Gamma(\VN ^*\otimes \VN ^*\otimes \HN ^*),&&
\hat{S}_u(X,Y,Z)={}&g^V_u(Y,[Z,X]^v-(\nabla^{\VN }(u))_ZX).
\end{align*}
where superscript $^v$ indicates projection onto the vertical component. Notice
that both $\hat{\Omega}_u$ and $\hat{S}_u$ have well-defined limits
when $u\ra 0$. We conclude that $\omega_u$ has a well-defined limit $\omega_0$
when $u\ra 0$.

We look at the curvature tensors now. We get:
\begin{equation}\label{curveq1}
F(\nabla^{h_u})=F(\nabla^{\oplus}_u)+[\nabla^{\oplus}_u,\tau_u(\omega_u)]
+\tau_u(\omega_u)\wedge\tau_u(\omega_u).
\end{equation}

Notice that for a fixed $u$, $\nabla^{\oplus}_u$ is $h_u$-metric compatible
since 
$\nabla^{\VN }(u)$ preserves $g^V(u)$ and
$\pi^*\nabla^B$ preserves $\pi^*g^B$. As a consequence, the morphism 
$\tau_u:\Lambda^2T^*N\ra\End^-(TN)$ is
parallel with respect to the connection $\nabla^{\oplus}_u$ for every $u$.
Therefore
\begin{equation}\label{curveq2}[\nabla^{\oplus}_u,\tau_u(\omega_u)]
=\tau_u(\nabla^{\oplus}_u\omega_u).
\end{equation}
where on the right $\nabla^{\oplus}_u$ is the extension on tensors of
$\nabla^{\oplus}_u$. It preserves the type of a double form, i.e., it takes
purely horizontal to purely horizontal, etc.

Due to the fact that $\nabla^{\HN }\neq \pi^*\nabla^{B}$, $\omega_u$ is not a
mixed form, which means that $\tau_u(\omega_u)$   has a certain diagonal
component. In fact we can write:
\begin{equation}\label{eq91} \omega_u=\tilde{\omega}_u+\omega^h_u
\end{equation}
where $\tilde{\omega}_u$ is made exclusively of mixed terms while $\omega^h_u$
is a purely horizontal term with:
\[ \tilde{\omega}_u:=(\tau_u)^{-1}(\nabla^{h_u}-\nabla^{\VN }(u)\oplus
\nabla^{\HN }(u))
\]
and
\[\omega^h_u:=(\tau_u)^{-1}(\nabla^{\VN }(u)\oplus
\nabla^{\HN }(u)-\nabla^{\oplus}_u).
\]
We used $\nabla^{\HN }(u)$ for the horizontal orthogonal projection of
$\nabla^{h_u}$ which does not coincide with $\pi^*\nabla^B$. Instead, we have
the following.

\begin{lem} \label{nl1} Let $\pi:P\ra B$ be a Riemannian submersion, and
$\nabla^{\HP }$ the orthogonal projection of the Levi-Civita connection onto
$\HP \simeq \pi^*TB$. Let 
\begin{align*}
\Omega:\HP \times \HP \ra \VP ,&& \Omega(X,Y)=P^{\VP }[X,Y]
\end{align*}
be the curvature of the Ehresmann connection (a
bundle morphism), and $\widetilde{\Omega}:\VP \times \HP \ra \HP $ the unique
bundle
morphism that satisfies
\begin{align*}  
\langle\widetilde{\Omega}(X,Y),Z\rangle=\langle X,\Omega(Y,Z)\rangle,&&
(\forall) Z\in \Gamma(\HP ).
\end{align*}
Then, for all $X\in \Gamma(TP), \;Y\in \Gamma(\HP )$,
\[
\nabla^{\HP }_XY-(\pi^*\nabla^B)_XY=\frac{1}{2}\Omega(P^{\HP }(X),Y)-\frac{1}{2}
\widetilde{\Omega}(P^{\VP }(X),Y).
\] 
In particular
\begin{align*}
\langle\nabla^{\HP }_XY,Z\rangle-\langle\pi^*\nabla^B_XY,Z\rangle
=&-\frac{1}{2}\langle P^{\VP }(X),[Y,Z]\rangle,& (\forall) Y,Z\in \Gamma(\HP ).
\end{align*}
\end{lem}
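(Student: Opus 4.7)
The plan is to derive both identities from the Koszul formula, exploiting the fact that the difference $\nabla^{\HP} - \pi^*\nabla^B$ of two connections on the common vector bundle $\HP \simeq \pi^*TB$ is a tensor (the Leibniz terms in $Y$ cancel because $P^{\HP}Y = Y$ for $Y \in \Gamma(\HP)$). So I may evaluate both sides on any convenient local frame; I will take $Y$ and $Z$ to be \emph{basic} horizontal vector fields (horizontal lifts of $\tilde Y, \tilde Z \in \Gamma(TB)$) and split $X = P^{\HP}(X) + P^{\VP}(X)$, handling each piece separately. Note also that $\langle \nabla^{\HP}_X Y, Z\rangle = \langle \nabla_X Y, Z\rangle$ for $Z \in \Gamma(\HP)$, where $\nabla$ is the Levi-Civita connection of $P$, so everything reduces to Koszul on $P$.

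Suppose first that $X$ is basic horizontal, the lift of $\tilde X$. Then $\langle Y, Z\rangle_P = \pi^*\langle \tilde Y, \tilde Z\rangle_B$ is a basic function, $[X, Y] = [\tilde X,\tilde Y]^{\HP} + \Omega(X,Y)$ with the vertical part orthogonal to the horizontal $Z$, and similarly for $[Y,Z]$, $[Z,X]$. Comparing the six terms in $2\langle\nabla_X Y, Z\rangle$ on $P$ with the corresponding six terms in $2\langle \nabla^B_{\tilde X}\tilde Y, \tilde Z\rangle$ on $B$ pulled back by $\pi$, every term matches. Hence $\langle \nabla^{\HP}_X Y - \pi^*\nabla^B_X Y, Z\rangle = 0$, consistent with the claimed formula since $P^{\VP}(X) = 0$.

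Next, suppose $X$ is vertical. In Koszul's formula
\[ 2\langle \nabla_X Y, Z\rangle = X\langle Y,Z\rangle + Y\langle X,Z\rangle - Z\langle X,Y\rangle + \langle[X,Y],Z\rangle - \langle[Y,Z],X\rangle + \langle[Z,X],Y\rangle,\]
the first term vanishes because $\langle Y,Z\rangle$ is basic and $\pi_*X = 0$; the second and third vanish because horizontal and vertical vectors are orthogonal; and the brackets $[X,Y]$ and $[Z,X]$ are vertical (they $\pi$-project to $[0,\tilde Y] = 0$ and $[\tilde Z,0] = 0$), hence orthogonal to the horizontal $Z$ and $Y$. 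Only $-\langle[Y,Z], X\rangle = -\langle \Omega(Y,Z), X\rangle = -\langle P^{\VP}(X), [Y,Z]\rangle$ survives. On the other side, $(\pi^*\nabla^B)_X Y = 0$ because $\pi_* X = 0$. This proves the second (``in particular'') equation.

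The first equation is the vectorial version of the second. For vertical $X$, the defining relation $\langle\widetilde\Omega(V,Y), Z\rangle = \langle V, \Omega(Y,Z)\rangle$ rewrites the pairing computed above as $\langle -\tfrac12\widetilde\Omega(P^{\VP}X, Y), Z\rangle$ for every $Z \in \Gamma(\HP)$, identifying the horizontal correction. For horizontal $X$, the horizontal correction vanishes (as shown), and the residual $\tfrac12\Omega(P^{\HP}X, Y)$ records the standard O'Neill vertical contribution $\tfrac12 P^{\VP}[X,Y]$ to $\nabla_X Y - \pi^*\nabla^B_X Y$, obtained by the same Koszul collapse but now pairing against a vertical test vector. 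The only real obstacle is the initial tensoriality reduction; once that is in place, the proof is pure Koszul bookkeeping.
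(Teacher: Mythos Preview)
Your proof is correct and follows essentially the same route as the paper's: reduce by tensoriality to basic horizontal $Y,Z$, split $X$ into horizontal and vertical parts, and read off each case from the Koszul formula. The paper cites Petersen for the horizontal-$X$ case and \cite{BGV} Lemma 10.7 for the vanishing of the bracket terms when $X$ is vertical, whereas you compute both directly; your observation that $[X,Y]$ is merely \emph{vertical} (since $\pi_*[X,Y]=0$) already suffices to kill $\langle [X,Y],Z\rangle$, and is in fact the robust statement here.
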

\begin{proof} It is well-known (see \cite{Pe}, pp.\ 82) that if $X$ and $Y$ are
horizontal lifts of vector fields $\overline{X},\overline{Y}$ on $B$ then
\[ \nabla_X^PY=\pi\circ
{\nabla_{\overline{X}}^B\overline{Y}}+\frac{1}{2}\Omega(X,Y).
\]
In other words, for this kind of vector fields we have:
\begin{equation}\label{conrel1} \nabla_X^PY=(\pi^*\nabla^B)_XY+\frac{1}{2}
\Omega(X,Y).
\end{equation}
It is easy to extend  equation \eqref{conrel1} to vector fields $X=fX_1$ and
$Y=gY_1$ where $X_1$ and $Y_1$ are horizontal lifts and $f,g\in C^{\infty}(P)$.
This means that \eqref{conrel1} holds for all $X,Y\in \Gamma(\HP )$.
 
On the other hand, for $X\in \Gamma(\VP )$  and $Y,Z$ horizontal lifts, one has 
\[ 2\langle \nabla_X^PY,Z\rangle=\langle [X,Y],Z\rangle-\langle
[Y,Z],X\rangle+\langle [Z,X],Y\rangle+X\langle Y,Z\rangle=-\langle
[Y,Z],X\rangle,
\]
the reason being that $[X,Y]=0=[Z,X]$ (see Lemma 10.7 in \cite{BGV}).
Since in this case $\pi^*\nabla^B_XY=0$  we get
\begin{equation}\label{conrel2} \langle \nabla_X^PY,Z\rangle
-\langle\pi^*\nabla_XY,Z\rangle=-\frac{1}{2}\langle [Y,Z],X\rangle
\end{equation}
and the relation holds also for $Y=gY_1$ and $Z=hZ_1$ with $Y_1$ and $Z_1$
horizontal lifts and $g,h\in C^{\infty}(P)$. This means that \eqref{conrel2}
holds for all $X\in \Gamma(\VP )$, $Y,Z\in \Gamma(\HP )$.
\end{proof}

According to Lemma \ref{nl1}, for $X\in \Gamma(TN)$ and $Y,Z\in
\Gamma(\HN )=\Gamma(\pi^*TB)$ we have:
\[\left\langle\left(\nabla^{\VN }(u)\oplus
\nabla^{\HN }(u)-\nabla^{\oplus}_u\right)_XY,Z\right\rangle=\omega^h_u(X)(Y,
Z)=-1/2 h^{V}_u(P^{\VN }(X),[Y,Z])
\]
and thus $\omega^h_u$ has a finite limit when $u\ra 0$. Since $\omega_u$
has a limit we deduce from \eqref{eq91} that $\tilde{\omega}_u$ has a limit
when $u\ra 0$. We conclude that
\[\nabla^{\oplus}_u\omega_u=\nabla^{\oplus}_u\tilde{\omega}_u+\nabla^{\oplus}
_u\omega^h_u
\]
is a decomposition into a purely mixed term and a purely horizontal term since
$\nabla^{\oplus}_u$ preserves the type of the form. Both sides have a
well-defined limit when $u\ra 0$.

\vspace{0.2cm}
In order to control
$(\tau_u)^{-1}(\tau_u(\omega_u)\wedge\tau_u(\omega_u))$ we need to take a closer
look
at $\tau_u$. Since for every $\eta\in \Omega^1(TN)$ we have
\[ \eta^{\sharp_u}=(\eta^v)^{\sharp^v_u}+u(\eta^h)^{\sharp^h}
\] 
where the decomposition $\eta=\eta^v+\eta^h$ is independent of $u$ and
$\sharp^v_u$ is the
$g^V_u$-metric dual while $\sharp^h$ is the $\pi^*g^B$-metric dual, we get:
\[\tau_u=\tau_0^u+u\tau_0',
\]
where
\begin{align*}
\tau_0^u:\Lambda^2T^*N\ra \Hom(TN,\VN ),&&
\tau_0^u(\omega_1\wedge\omega_2)(\xi)=\omega_2(\xi)(\omega_1^v)^{\sharp^v_u}
-\omega_1(\xi)(\omega_2^v)^{\sharp^v_u}\\
\tau_0':\Lambda^2T^*N\ra \Hom(TN,\HN ),&&
\tau_0'(\omega_1\wedge\omega_2)(\xi)=\omega_2(\xi)(\omega_1^h)^{\sharp^h}
-\omega_1(\xi)(\omega_2^h)^{\sharp^h}.
\end{align*}
\begin{rem} \label{remtau} 
Notice that 
\begin{itemize}
\item If $\omega_1$ or $\omega_2$ is
horizontal, then $\tau_0^u(\omega_1\wedge\omega_2)\xi=0$ for $\xi$ vertical. 
\item If $\omega_1$ or $\omega_2$ is vertical then
$\tau_0'(\omega_1\wedge\omega_2)\xi=0$ for $\xi$ horizontal.
\item If $\xi$ is vertical but $\omega_1$ and $\omega_2$ are both horizontal
then
$\tau_0'(\omega_1\wedge\omega_2)\xi=0$. 

\end{itemize}
\end{rem}
Clearly $\tau_0(u)$ has a
finite limit as $u\ra 0$. 
Define $\gamma_u:\Lambda^2TN\ra \Lambda^2 T^*N$:
\[ \gamma_u:=(\tau_u)^{-1}(\tau_u(\omega_u)\wedge\tau_u(\omega_u)).
\]
More explicitly,
\begin{align*}
\gamma_u(a_1,a_2)(\xi_1,\xi_2)={}&h_u\big(\xi_1,
\tau_u(\omega_u(a_1))\tau_u(\omega_u(a_2))-\tau_u(\omega_u(a_2))\tau_u(\omega_u(
a_1))\xi_2\big)\\
={}&h_u\big(\tau_u(\omega_u(a_2))\xi_1,
\tau_u(\omega_u(a_1))\xi_2\big)-h_u\big(\tau_u(\omega_u(a_1))\xi_1,
\tau_u(\omega_u(a_2))\xi_2\big)\\
={}&g^V_u\left(\tau_0^u(\omega_u(a_2))\xi_1,
\tau_0^u(\omega_u(a_1))\xi_2\right)-g^V_u\left(\tau_0^u(\omega_u(a_1))\xi_1
,\tau_0^u(\omega_u(a_2))\xi_2\right)\\
{}&+u\left[\pi^*g^B\big(\tau_0'(\omega_u(a_2))\xi_1,
\tau_0'(\omega_u(a_1))\xi_2)\big)-\pi^*g^B\big(\tau_0'(\omega_u(a_1))\xi_1,
\tau_0'(\omega_u(a_2))\xi_2)\big) \right].
\end{align*}
The last equality follows from the fact that $\tau_0(u)$ takes values in
$\VN $ and $\tau_0'$ takes values in $\HN $.

We define $(\omega\wedge\omega)_0^u:\Lambda^2TN\ra \Lambda^2 T^*N$,
\[
(\omega\wedge\omega)_0^u:=g^V_u\left(\tau_0^u(\omega_u(a_2))\xi_1,
\tau_0^u(\omega_u(a_1))\xi_2\right)-g^V_u\left(\tau_0^u(\omega_u(a_1))\xi_1
,\tau_0^u(\omega_u(a_2))\xi_2\right).
\]
By Remark \ref{remtau}, $\tau_0^u$ will take mixed forms
and purely horizontal forms into endomorphisms which vanish on vertical vectors.
Recall  \eqref{eq91}  by which $\omega_u$ is a sum of 
 mixed terms and purely horizontal terms. It follows that
$\tau_0^u(\omega_u(a_2))\xi$ is zero for $\xi$ vertical. We conclude that
$(\omega\wedge\omega)_0^u$ is a purely horizontal form.

Define also $(\omega\wedge\omega)_0'(u):\Lambda^2TN\ra \Lambda^2
T^*N$ by
\[(\omega\wedge\omega)_0'(u):=\pi^*g^B\big(\tau_0'(\omega_u(a_2))\xi_1,
\tau_0'(\omega_u(a_1))\xi_2)\big)-\pi^*g^B\big(\tau_0'(\omega_u(a_1))\xi_1,
\tau_0'(\omega_u(a_2))\xi_2)\big).
\]
Then
\begin{equation}\label{curveq3}
\gamma_u=(\omega\wedge\omega)_0^u+u(\omega\wedge\omega)_0'(u).
\end{equation}
We will use the same notation $F(\nabla^{h_u})$ for the curvature forms
$(\tau_u)^{-1}(F(\nabla^{h_u}))$
and $F(\nabla^{\oplus}_u)$ for $(\tau_u)^{-1}(F(\nabla^{\oplus}_u))$.
From \eqref{curveq1}, \eqref{curveq2} and \eqref{curveq3} we get the
following equality of $(2,2)$ double forms:
\[F(\nabla^{g_u})=F(\nabla^{\oplus}_u)+\nabla^{\oplus}
_u\omega_u+(\omega\wedge\omega)_0(u)+u(\omega\wedge\omega)_0'(u).
\]
 
The matrix decomposition $F(\nabla^{\oplus}_u)=F(\nabla^{\VN }(u))\oplus
F(\pi^*\nabla^B)$ translates into the equality of $(2,2)$ double forms for the
metric $h_u$:
\[ F(\nabla^{\oplus}_u)=F(\nabla^{\VN }(u))+u^{-1}\pi^*F(\nabla^B).
\]

We finally look at the decomposition for $(\omega\wedge\omega)_0'(u)$.  Use
\eqref{eq91} to get
\[(\omega\wedge\omega)_0'(u)=A_u^1+A_u^2+A_u^3+A_u^4,
\]
where
\[A_u^1(a_1,a_2)(\xi_1,\xi_2)=\pi^*g^B(\tau_0'(\tilde{\omega}_u(a_2))\xi_1,
\tau_0'(\tilde{\omega}_u(a_1))\xi_2)-\pi^*g^B(\tau_0'(\tilde{\omega}
_u(a_1))\xi_1,\tau_0'(\tilde{\omega}_u(a_2))\xi_2)
\]
\[
A_u^4(a_1,a_2)(\xi_1,\xi_2)=\pi^*g^B(\tau_0'({\omega}_u^h(a_2))\xi_1,\tau_0'({
\omega}_u^h(a_1))\xi_2)-\pi^*g^B(\tau_0'({\omega}_u^h(a_1))\xi_1,\tau_0'({\omega
}_u^h(a_2))\xi_2)
\]
\[A_u^3(a_1,a_2)(\xi_1,\xi_2)=\pi^*g^B(\tau_0'({\omega}_u^h(a_2))\xi_1,
\tau_0'(\tilde{\omega}_u(a_1))\xi_2)-\pi^*g^B(\tau_0'({\omega}_u^h(a_1))\xi_1,
\tau_0'(\tilde{\omega}_u(a_2))\xi_2)
\]
\[A_u^2(a_1,a_2)(\xi_1,\xi_2)=\pi^*g^B(\tau_0'(\tilde{\omega}_u(a_2))\xi_1,
\tau_0'({\omega}_u^h(a_1))\xi_2)-\pi^*g^B(\tau_0'(\tilde{\omega}_u(a_1))\xi_1,
\tau_0'({\omega}_u^h(a_2))\xi_2).
\]
Now $A_u^1$ is purely vertical, $A_u^4$ is purely horizontal, and moreover one
can
check that $A_u^2$ and $A_u^3$ are  mixed. 
We have thus proved the following
\begin{prop}\label{Le2} The following equality of $(2,2)$ double forms holds 
\begin{align*}
F(\nabla^{h_u})={}&\left[u^{-1}\pi^*F(\nabla^B)+\nabla^{\oplus}
_u\omega^h_u+(\omega\wedge\omega)_0^u+uA_u^4\right]\\
{}&+\left[\nabla^{\oplus}_u\tilde{\omega}_u+uA_u^2+uA_u^3\right]+\left[F(\nabla^
{\VN }(u))+uA_u^1\right]
\end{align*}
where the sums in square brackets represent the purely horizontal, mixed, or 
purely vertical components.
All terms dependent on $u$ have a finite limit when $u\ra 0$.
\end{prop}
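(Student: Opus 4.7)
The plan is to start from the decomposition
$\nabla^{h_u}=\nabla^{\oplus}_u+\tau_u(\omega_u)$
and expand the curvature by the usual gauge-theoretic identity
$F(\nabla^{h_u})=F(\nabla^{\oplus}_u)+d^{\nabla^{\oplus}_u}\tau_u(\omega_u)+\tau_u(\omega_u)\wedge\tau_u(\omega_u)$.
The parallelness of $\tau_u$ for $\nabla^{\oplus}_u$ (which follows from $h_u$-compatibility of $\nabla^{\oplus}_u$) turns the middle term into $\tau_u(\nabla^{\oplus}_u\omega_u)$. After applying $(\tau_u)^{-1}$ throughout, one arrives at an identity of $(2,2)$ double forms
\[
F(\nabla^{h_u})=F(\nabla^{\oplus}_u)+\nabla^{\oplus}_u\omega_u+\gamma_u,
\]
which is exactly the starting point already prepared in the text.

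Next I would treat each of the three summands separately according to its contribution to the horizontal/mixed/vertical trichotomy. For $F(\nabla^{\oplus}_u)$ the block structure of $\nabla^{\oplus}_u=\nabla^{\VN}(u)\oplus\pi^*\nabla^B$ gives the matrix sum $F(\nabla^{\VN}(u))+u^{-1}\pi^*F(\nabla^B)$; the $u^{-1}$ factor is the price of rescaling the horizontal form by $h_u$ rather than by $g^B$, and it is precisely this factor that survives in the horizontal bracket of the proposition. For the term $\nabla^{\oplus}_u\omega_u$ I would use the splitting $\omega_u=\tilde\omega_u+\omega^h_u$ already established, observe that $\nabla^{\oplus}_u$ preserves the horizontal/mixed/vertical type of any double form (since its two summands act on their respective factors), and hence conclude that $\nabla^{\oplus}_u\tilde\omega_u$ is purely mixed while $\nabla^{\oplus}_u\omega^h_u$ is purely horizontal.

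For the quadratic term $\gamma_u$, I would use the decomposition $\tau_u=\tau_0^u+u\tau_0'$ together with the three bullet points in Remark \ref{remtau} to split $\gamma_u=(\omega\wedge\omega)_0^u+u(\omega\wedge\omega)_0'(u)$. The first piece is forced to be purely horizontal because $\tau_0^u$ produces vertical values while $\omega_u$ has no purely vertical part, so wedging annihilates vertical inputs. The second piece I would further decompose as $A_u^1+A_u^2+A_u^3+A_u^4$ coming from the four ways of pairing $\tilde\omega_u$ and $\omega^h_u$ through $\tau_0'$; the bullet points in Remark \ref{remtau} again allow one to read off that $A_u^1$ is purely vertical, $A_u^4$ is purely horizontal, and $A_u^2,A_u^3$ are mixed. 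Summing everything and grouping by type then yields the three bracketed expressions in the statement.

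The main obstacle, or rather the point that requires the most care, will be the claim that \emph{all} $u$-dependent pieces admit finite limits as $u\to 0$. This comes down to checking limits of the building blocks: $\hat\Omega_u$ and $\hat S_u$ have clear limits from their explicit formul\ae{} (only $g^V_u$ appears, not $h_u^{-1}$), hence so does $\omega_u$; the horizontal piece $\omega^h_u$ is controlled by Lemma \ref{nl1}, giving $\omega^h_u(X)(Y,Z)=-\tfrac12 g^V_u(P^{\VN}(X),[Y,Z])$ with no singular factor; so $\tilde\omega_u=\omega_u-\omega^h_u$ also has a limit. The operators $\tau_0^u$ and $\tau_0'$ are defined only via $g^V_u$ and $\pi^*g^B$ respectively, so neither blows up. Finally $\nabla^{\VN}(u)$ has a limit (the projection onto $\VN$ of the Levi-Civita connection of $g^N$), which controls $F(\nabla^{\VN}(u))$. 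Once these finite limits are in hand, the proposition follows by collecting the pieces according to their types.
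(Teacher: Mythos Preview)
Your proposal is correct and follows essentially the same route as the paper: the proposition is stated there as a summary of the preceding discussion, and your outline reproduces that discussion step by step---the curvature expansion \eqref{curveq1}, the parallelness argument \eqref{curveq2}, the splitting \eqref{eq91} with Lemma~\ref{nl1} controlling $\omega^h_u$, the decomposition $\tau_u=\tau_0^u+u\tau_0'$ with Remark~\ref{remtau} identifying the types, and the explicit limits of $\hat{\Omega}_u$, $\hat{S}_u$, $\nabla^{\VN}(u)$. There is nothing to add.
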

From  $F(\nabla^{uh_u})=uF(\nabla^{h_u})$ one gets the corresponding
decomposition for $F(\nabla^{uh_u})$.

\subsection{Horizontal variations of the model metric}

We close this section by discussing what happens when the model metric has the
following structure:
\begin{equation}\label{modeledge}
g_e=dr^2\oplus r^2g^V(r)\oplus \pi^*g^B(r)
\end{equation}
with $g^B(r)$ a smooth family of metrics on $(-\epsilon,0]$. Various types of
perturbations will be considered in Section \ref{Sec8}.
By reasoning exactly as in the proof of Theorem \ref{Theorem1} one can compute
the limits of transgression
forms. In order to state the result, we need to introduce more notation.

Let $(g_r)_{r\in (-\epsilon,\epsilon)}$ be a smooth family of metrics on a
smooth manifold $B$ of dimension $b$. Let $g:=g_0$ and $\dot{g}:=\frac{\partial
g}{\partial r}(0)$ and denote:
\[ Q_{i,b}(g_r):=\frac{1}{i! (b-2i)!}\cB_g\left(R^i\wedge \dot{g}^{b-2i}\right).
\]
\begin{theorem}\label{olst}
On a manifold with incomplete edge singularities of type \eqref{modeledge},
\begin{align*} 
(2\pi)^k \chi(M)={}&\int_{M}\Pf^g-
\sum_{(i,j)\in A_{k,b}}(-1)^{2k-b}\tilde{c}(k-j-1)\int_B\left(
Q_{i,b}(g^B(r))\int_{N/B}P_{j,f}\left(g^V\right)\right)\\
\intertext{where}
A_{k,b}:={}&\{(i,j)~|~0\leq i\leq j\leq k-1,\; i\leq b/2 \}.
\end{align*}
\end{theorem}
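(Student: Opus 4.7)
The plan is to mimic the proof of Theorem \ref{Theorem1}: truncate at $\partial M_r = \{r\}\times N$, apply \eqref{eq-4} to the compact manifold-with-boundary $M_r$, and compute the $r\to 0$ limit of the resulting boundary transgression integral. Only two ingredients change when $g^B$ depends on $r$. First, by Lemma \ref{L1} the second fundamental form picks up a new horizontal contribution,
\begin{align*}
\II^r = -\tfrac{1}{2}L_{\partial_r}\bigl(r^2g^V(r)\oplus \pi^*g^B(r)\bigr) = -r\,Z(r) - \tfrac{1}{2}\pi^*\dot g^B(r),
\end{align*}
where $Z(r) = g^V(r) + \tfrac{r}{2}\dot g^V(r)$ is the vertical $(1,1)$-double form appearing in the proof of Theorem \ref{Theorem1}, while the new horizontal piece $-\tfrac{1}{2}\pi^*\dot g^B(r)$ has a nonzero limit at $r=0$. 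Second, Proposition \ref{Le2} is invoked with $g^B$ replaced by $g^B(r)$ at each $r$, producing a decomposition $F(\nabla^{g_r}) = X(r) + r^2 Y(r)$ with $X(r)$ purely horizontal (leading term $\pi^*F(\nabla^{B(r)})$) and $Y(r)$ collecting the mixed and purely vertical terms (leading vertical term $F(\nabla^{\VN }(r))$).

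I would then expand $(R^{g_r})^j\wedge(\II^r)^{2k-1-2j}$ via a double binomial expansion, with $i$ counting horizontal curvature factors $X$ and $m$ counting horizontal second-fundamental-form factors $\pi^*\dot g^B$. Applying the scaling $\cB_{g_r}=(-r)^{-f}\cB_{g^N(r)}$, the overall $r$-exponent turns out to be $b-2i-m$, so a finite nonzero limit forces $m=b-2i$. Since $X^i(\pi^*\dot g^B)^{b-2i}$ already saturates the horizontal bidegree $b$, the mixed components of $Y^{j-i}$ must vanish for bidegree reasons (exactly as in the proof of Theorem \ref{Theorem1}), leaving only the purely vertical leading contribution $F(\nabla^{\VN })^{j-i}$. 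The admissibility constraints $0\le m\le 2k-1-2j$, $0\le i\le j$, $j\le k-1$ cut out exactly the index set $A_{k,b}$. The surviving limit integrand then factorises into a horizontal Berezin integral over $B$ computing $Q_{i,b}(g^B(r))$ and a vertical Berezin integral along the fibres computing $P_{j,f}(g^V)$, up to the factorial normalisations in those definitions.

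Summation with the transgression coefficients $c(j,k)=\tilde c(k-j-1)/[j!(2k-1-2j)!]$ from \eqref{eq-4}, combined with the binomials $\binom{j}{i}\binom{2k-1-2j}{b-2i}$ and the sign factor $(-\tfrac{1}{2})^{b-2i}$ collected during the expansion, produces the stated formula. Contrary to Theorem \ref{Theorem1}, the formula is now valid for both parities of $b$: when $b$ is odd, nonzero contributions come from odd $m=b-2i$, i.e., odd powers of the new horizontal term in $\II^r$, and when $\dot g^B\equiv 0$ all such terms disappear, recovering Theorem \ref{Theorem1}(a). The main obstacle is the combinatorial bookkeeping: verifying that the binomials combine with the Berezin-Pfaffian normalisations defining $Q_{i,b}$ and $P_{j,f}$, that the signs collected from $(-r)^{b-2i-m}$, $(-\tfrac{1}{2})^{b-2i}$, and the minus in front of the boundary term in \eqref{eq-4} assemble correctly to $(-1)^{2k-b}$, and that the mixed-curvature terms in the Proposition \ref{Le2} decomposition indeed contribute nothing to the $r\to 0$ limit. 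No new geometric input beyond Theorem \ref{Theorem1} is required.
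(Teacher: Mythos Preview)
Your proposal is correct and follows essentially the same route as the paper's own proof: write $\II^r=-(rZ+T)$ with $T$ the new horizontal piece coming from $\tfrac{1}{2}\pi^*\dot g^B(r)$, expand $(X+r^2Y)^j(rZ+T)^{2k-1-2j}$ via a double binomial, use $\cB_{g_r}=(-r)^{-f}\cB_{g^N}$ to read off the $r$-exponent $b-2i-m$, and retain only the terms with $2i+m=b$, which by horizontal saturation kills the mixed pieces of $Y$ and yields the $Q_{i,b}\cdot P_{j,f}$ factorisation. The paper's proof is terser but identical in substance; your explicit identification of the index set $A_{k,b}$ and of the parity discussion (including the recovery of Theorem~\ref{Theorem1} when $\dot g^B\equiv 0$) matches the Corollary that follows.
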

\begin{proof} One writes $\II=-(rZ+T)$ where $T=\dot{g}^B$ and notices first
that $ZT=TZ$. Then one ends up with a sum for fixed $0\leq j\leq k-1$
\[ \sum_{i}\sum_{l}(-1)^{f+1}r^{b-(2i+l)}{\binom{j}{i}}
{\binom{2k-2j-1}{l}}\cB_{g^N}\left(X^i(0)T^l(0)Y^{j-i}(0)Z^{2k-2j-1-l}(0)\right)
\]
where $X(0)$ and $T(0)$ are purely horizontal. Only when $2i+l=b$ one gets
something non-trivial. Multiply by $c(j,k)$ and sum to get the desired formula.
\end{proof}
\begin{cor} If $\dot{g}^B(0)\equiv 0$ one recovers the formula of Theorem
\ref{Theorem1}.
\end{cor}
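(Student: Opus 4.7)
The plan is to show that under the hypothesis $\dot{g}^B(0)\equiv 0$, almost every term in the sum of Theorem \ref{olst} vanishes, and that what survives collapses exactly to the boundary correction of Theorem \ref{Theorem1}. The two cases (a) and (b) of the latter then follow by splitting on the parity of $b=\dim B$.

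First I would examine the factors $Q_{i,b}(g^B(r))\bigr|_{r=0}$: by definition they carry the factor $(\dot{g}^B)^{b-2i}$, so the hypothesis $\dot{g}^B(0)\equiv 0$ kills $Q_{i,b}(g^B)$ whenever $b-2i>0$. Consequently only the index $i=b/2$ can contribute. If $b$ is odd this equation has no integer solution, so every term of the correction vanishes and one immediately recovers part (a) of Theorem \ref{Theorem1}.

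If $b$ is even, only the pairs $(b/2,j)\in A_{k,b}$ survive, i.e.\ $j\in\{b/2,\ldots,k-1\}$. At $i=b/2$ the intrinsic definition of the Pfaffian gives
\[
Q_{b/2,b}(g^B)=\frac{1}{(b/2)!}\cB_{g^B}\!\left((R^B)^{b/2}\right)=\Pf(g^B),
\]
and the sign $(-1)^{2k-b}$ equals $1$. The formula of Theorem \ref{olst} thus reduces to
\[
(2\pi)^k\chi(M)=\int_M\Pf^g-\int_B\Pf(g^B)\int_{N/B}\sum_{j=b/2}^{k-1}\tilde{c}(k-j-1)\,P_{j-b/2,\,f}(g^V),
\]
where $f=2k-1-b$ is odd (the shift in the first index of $P$ is what the fiberwise integrand in the proof of Theorem \ref{olst} naturally produces once $i$ is fixed).

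The last step is to match the bracketed sum with $-\oPf(g^V)$. Setting $l=j-b/2$ and $k'=(f+1)/2=k-b/2$, one computes
\[
-\tilde{c}(k'-1-l)=-(-1)^{k'-1-l}(2k'-2l-3)!!=(-1)^{k'+l}(2k'-2l-3)!!,
\]
which is precisely the coefficient of $P_{l,f}(g^V)$ in the definition of $\oPf(g^V)$. Hence the fiber sum equals $-\oPf(g^V)$ and the boundary correction becomes $-\int_B\Pf(g^B)\int_{N/B}\oPf(g^V)$, yielding part (b). The only place where care is required is this final identification; it is an elementary but sign- and double-factorial-sensitive verification, and constitutes the main — indeed the only — technical obstacle in the proof.
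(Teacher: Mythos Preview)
Your approach is correct and is precisely the route the paper implicitly takes: the corollary is stated without proof as an immediate specialization of Theorem \ref{olst}, and your argument spells out that specialization—kill all $Q_{i,b}$ with $b-2i>0$ using $\dot g^B(0)=0$, note that $i=b/2$ forces $b$ even (giving part (a) for $b$ odd), identify $Q_{b/2,b}=\Pf(g^B)$, and match the surviving sum with $\oPf(g^V)$ after the reindexing $l=j-b/2$. Your parenthetical remark that the fiber integrand carries $P_{j-i,f}$ rather than $P_{j,f}$ is well taken: this is what the proof of Theorem \ref{olst} actually produces (the vertical factor there is $Y^{j-i}Z^{2k-2j-1-l}$, and with $2i+l=b$ the exponent of $Z$ is $f-2(j-i)$), so the index in the stated formula appears to be a typo.

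One small warning on the final sign bookkeeping: you correctly compute $-\tilde c(k'-1-l)=(-1)^{k'+l}(2k'-2l-3)!!$, which says the bracketed sum equals $-\oPf(g^V)$; but then substituting into $-\int_B\Pf(g^B)\int_{N/B}[\text{sum}]$ yields $+\int_B\Pf(g^B)\int_{N/B}\oPf(g^V)$, not the minus sign you write. This apparent discrepancy traces back to sign conventions in the paper itself (compare the defining formula for $\oPf$ with $\tilde c(k-1-j)P_{j,2k-1}$ via Corollary \ref{cor1} and Remark \ref{nrem}), so do not lose sleep over it—your method is right, and this is exactly the ``sign- and double-factorial-sensitive'' step you flagged.
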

Anticipating Section \ref{Sec8}, Theorem \ref{olst} is an example of a
Gauss-Bonnet formula for first order
perturbations of the model metric
\[ dr^2\oplus r^2g^V(r)\oplus \pi^*g^B(0)
\]
in the sense of Definition \ref{def10}.

 \section{Manifolds with fibered boundary}

The computations of the previous section allow us to address the Gauss-Bonnet
problem for another class of metrics. Assume again that $N$ fibers over $B$,
that we fix an Ehresmann
connection, a family of vertical metrics on the fibers
and a metric $g^B$ on $B$. The model fibered boundary metric on
$(1,\infty)\times N$ defined by this data is
\[ g_e^{\infty}:=dr^2\oplus g^V\oplus r^2 \pi^*g^B.
\]
We consider Riemannian manifolds $(M,g)$ (called \emph{manifolds with
fibered boundary} for which there exists a diffeomorphism $\varphi:M\setminus
K\ra
(1,\infty)\times N$ outside a compact set $K$ such that
\[ g=\varphi^*g_e^{\infty}.
\]
\begin{prop} Manifolds with fibered boundary are complete.
\end{prop}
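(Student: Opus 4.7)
The plan is to verify the Heine--Borel criterion: every closed metric ball $\bar{B}(p_0,R)$ in $M$ is compact, which by Hopf--Rinow implies that $(M,g)$ is complete (and geodesically complete). Fix an interior point $p_0$ and a compact set $K\subset M$ with $M\setminus K$ diffeomorphic to the end $(1,\infty)\times N$. Since $M$ has only finitely many ends, we may treat them one at a time, so it suffices to control distances on a single end.

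First I would establish the following pointwise bound on the length form on $(1,\infty)\times N$: writing a tangent vector as $X=\alpha\,\partial_r+X^V+X^H$ according to the splitting $T((1,\infty)\times N)=\bR\partial_r\oplus \VN\oplus \HN$, the model metric gives
\[
g(X,X)=\alpha^2+g^V(X^V,X^V)+r^2\pi^*g^B(X^H,X^H)\geq \alpha^2.
\]
Consequently, for any absolutely continuous curve $\gamma:[0,1]\to(1,\infty)\times N$, its length satisfies $L(\gamma)\geq |r(\gamma(1))-r(\gamma(0))|$. This is the only analytic fact needed; it is the manifestation of the fact that $\partial_r$ is a unit vector for the model metric.

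Now let $p\in M\setminus K$ be a point in the end with radial coordinate $r(p)$, and let $\gamma$ be any piecewise smooth curve joining $p_0$ to $p$. Decomposing $\gamma$ into the sub-arcs lying in $K$ and in the end, the above bound applied to each sub-arc in the end shows that $L(\gamma)\geq r(p)-r_{\partial K}$, where $r_{\partial K}$ is the (bounded) supremum of $r$ on $\partial K$. Taking the infimum over $\gamma$ yields $d(p_0,p)\geq r(p)-C$ for a constant $C$ depending only on $K$ and $p_0$. In particular, any point of the end lying in $\bar{B}(p_0,R)$ has $r(p)\leq R+C$.

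Finally, since $N$ is compact (it is the fibre of the end at infinity), the subset $[1,R+C]\times N$ of the end is compact. Hence $\bar{B}(p_0,R)\subset K\cup \bigl([1,R+C]\times N\bigr)$ is a closed subset of a compact set and is therefore compact. By Hopf--Rinow, $(M,g)$ is complete. The only subtle point --- and the place where the fibered-boundary hypothesis enters essentially --- is the pointwise inequality $|\dot\gamma|_g\geq |\dot r|$, which relies on the specific product form of the model metric near infinity; once that is in hand, the rest is routine.
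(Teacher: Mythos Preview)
Your proof is correct and rests on the same key observation as the paper's: the pointwise inequality $|X|_g\geq |dr(X)|$ on the end, i.e., the radial projection is $1$-Lipschitz. The paper packages this as ``the projection onto $[r,\infty)$ is proper (since $N$ is compact) and $1$-Lipschitz, hence the end is complete,'' while you unfold it into a Heine--Borel/Hopf--Rinow verification; these are the same argument in slightly different dress.
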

\begin{proof} Outside a relatively compact set, $M$ is isometric to
$[r,\infty)\times N$ endowed with the metric 
$g_e^{\infty}$ for some $r\in\bR$. The projection onto $[r,\infty)$ is proper
because $N$ is compact. Moreover, this projection clearly decreases lengths of
vectors, hence of curves, hence it decreases distances (it is Lipschitz of
constant $1$). This is enough to imply that $[r,\infty)\times N$ is a complete
metric space, hence $M$ is also complete.
\end{proof}

\begin{proof}[Proof of Theorem \ref{th1.3}] 
The computations are similar to Theorem \ref{Theorem1} and based also
on Proposition \ref{Le2} where we set $u^{-1}=r^2$.   Let $g_r:=g^V\oplus
r^2\pi^*g^B=h_{u}$  be the metric of the slice. Write the decomposition in
purely horizontal, mixed and purely vertical terms as:
\[
F(\nabla^{g_r})=(r^2A_2+A_0+r^{-2}A_{-2})+(C_0+r^{-2}C_{-2})+(D_0+r^{-2}D_{-2})
\]
where  $A_2=\pi^*F(\nabla^B)$, $D_0=F(\nabla^{\VN })$. Then
\[ (\II^r)^{2k-1-2j}=-r^{2k-1-2j}(\pi^*g^{B})^{2k-1-2j}
\]
and $\cB_{g_r}(\cdot)={r^{-b}}\cB_{g^N}(\cdot)$ where  $g^N=g^V\oplus
\pi^*g^B$. Hence
\[ \cB_{g_r}\left(F(\nabla^{g_r})^{j}\wedge
(\II^r)^{2k-1-2j}\right)=-r^{f-2j}\cB_{g^N}\left(F(\nabla^{g_r})^j\wedge(\pi^*g^
{B
})^{2k-1-2j}\right).
\]
We look at the term $(r^2A_2)^l$ for some $l\leq j$ in the expansion of
$F^{\nabla^{g_r}}$. Now the horizontal component of the product
$F(\nabla^{g_r})^j\wedge(\pi^*g^{B})^{2k-1-2j}$ cannot have degree bigger than
$b$ in order to be non-zero. Hence
\[ 2l+2k-1-2j\leq b ~\Leftrightarrow~ 2l+f-2j\leq 0.
\]
 All the other terms in the expansion of $F(\nabla^{g_r})$ contribute with
non-positive powers of $r$. Hence in the expansion of
$r^{f-2j}\cB_{g^N}\left(F(\nabla^{g_r})^j\wedge(\pi^*g^{B})^{2k-1-2j}\right)$
one
ends up only with non-positive powers of $r$. 

If $b$ is even, the inequalities
are strict so all terms will vanish when $r\ra \infty$. 
If $b$ is odd, collecting the terms that correspond to $2l=2j-f$ (which
incidentally 
forces $j\geq f/2$) we get \eqref{gbt}.
\end{proof}

\begin{cor} Let $(M,g)$ be a manifold with fibered boundary. If the base $B$ of
the boundary 
fibration $N\ra B$ is an odd-dimensional sphere with the round metric, then
\[ \chi(M)-\chi(F)=\frac{1}{(2\pi)^k}\int_{M}\Pf^g. 
\]
\end{cor}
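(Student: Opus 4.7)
The plan is to apply Theorem~\ref{th1.3}(b) directly, since the hypothesis that $B$ is an odd-dimensional sphere forces $b=\dim B$ to be odd. This yields
\[
(2\pi)^k\chi(M) = \int_{M}\Pf^g + (2\pi)^{f/2}\chi(F)\int_{B}\oPf(g^B),
\]
so the entire task reduces to evaluating $\int_{S^{2n-1}}\oPf(\mathrm{round})$ for the round metric on $B = S^{2n-1}$, where $2n-1 = b$.

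The key observation comes from Remark~\ref{nrem}: the odd Pfaffian is the transgression of a geometric cone of inclination $1$, i.e.\ $\oPf(h) = \TPf(N,h,1)$, and the same remark records the identity
\[
\int_{S^{2n-1}}\TPf(S^{2n-1},\mathrm{round},1) = (2\pi)^n.
\]
Hence $\int_{B}\oPf(g^B) = (2\pi)^n$ with $n = (b+1)/2$.

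To finish, I would just match the exponents. From $b+f = 2k-1$ and $b = 2n-1$, one gets $f/2 = k-n$, so
\[
(2\pi)^{f/2}\cdot (2\pi)^n = (2\pi)^{k-n}\cdot (2\pi)^n = (2\pi)^k.
\]
Substituting into Theorem~\ref{th1.3}(b) gives
\[
(2\pi)^k\chi(M) = \int_{M}\Pf^g + (2\pi)^k\chi(F),
\]
and dividing by $(2\pi)^k$ yields the desired formula. There is no real obstacle here: the statement is essentially a one-line consequence of Theorem~\ref{th1.3}(b) combined with the explicit computation of $\int_{S^{2n-1}}\oPf$ already done in Remark~\ref{nrem} (which itself traces back to Example~\ref{Exemp1}). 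The only thing worth pausing on is a consistency check of signs and orientations, since the round sphere appears in Example~\ref{Exemp1} as the boundary of a disk (outer normal convention) and in Remark~\ref{nrem} as a slice of a geometric cone (inner normal convention), which is precisely what produces the positive value $(2\pi)^n$ used above.
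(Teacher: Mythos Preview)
Your proof is correct and follows essentially the same route as the paper's: apply Theorem~\ref{th1.3}(b) and plug in the value $\int_{S^{2n-1}}\oPf(\mathrm{round})=(2\pi)^n$ from Remark~\ref{nrem}, then match exponents. The paper's own argument is terser---it simply points to Example~\ref{Exemp1} and Remark~\ref{nrem} and notes the orientation of $\partial_r$, then records the sanity check $M=\bR^{2k}$, $F=\{\pt\}$---but the content is the same, and your explicit bookkeeping of $f/2=k-n$ makes the deduction clearer.
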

\begin{proof} The normal $\partial_r$ points
outside the sphere. The computations of Example \ref{Exemp1} apply
(see also Remark \ref{nrem}). This fits with the example when $M=\bR^n$ and $F$ 
reduced to a point.
\end{proof}

\section{Edge manifolds: perturbations of the model metrics} \label{Sec8}
There is one familiar situation not entirely covered by the models of Section
\ref{Emmm}, namely that of a submanifold $B$ in a Riemannian manifold $(M,g)$.
The spherical normal bundle $N:=S\nuB $ inherits a fiber bundle structure over
$B$ and an Ehresmann connection, induced by the Levi-Civita connection as
follows. Let $\pi:TN\ra N$ be the natural projection. The Levi-Civita connection
induces a connection on $\nuB $ and therefore one obtains a splitting $T\nu
B=\pi^*\nuB \oplus \pi^* TB$ into vertical and horizontal components where
$\pi:\nuB \ra B$ is the natural projection.  Now  $S(\nuB )\subset \nuB $  is a
hypersurface whose unit normal vector is vertical (i.e., it belongs to $\pi^*\nu
B$) relative to the previous decomposition. It follows that $TS(\nuB )$ splits 
into the direct sum of $\tau^{\perp}\subset \pi^*\nuB $ (the orthogonal
complement of the tautological section of $\pi^*\nuB \ra S(\nuB )$) and $\pi^*
TB$. 

On both $TB$ and the normal vector bundle $\nuB \ra B$ there are metrics induced
by $g$, hence $(-\epsilon,0)\times N$ inherits an edge singularity metric.
However, the original metric $g$ in a neighborhood  of $B$ is not necessarily
isometric to a model metric in the sense defined in  Section \ref{Emmm} since
the normal exponential map that gives rise to a tubular neighborhood for $B$ is
only an "infinitesimal" isometry at the $0$ section.

It is therefore natural to consider perturbations of the model edge metrics of
Section
\ref{Emmm}. 

We will consider a differentiable edge manifold, meaning a compact manifold $M$
with boundary $N$, such that $\pi:N\ra B$ is a locally trivial fibration.
Moreover
we assume the following data given:
\begin{itemize}
\item[(a)] a boundary defining function $r:M\ra (-\epsilon,0]$;
\item[(b)] an Ehresmann connection on $N$, i.e., a splitting $TN=\VN \oplus
\pi^*TB$
\end{itemize}

We can use $r$ in order to produce a collar neighborhood $U$ of  $N$
diffeomorphic with $(-\epsilon,0]\times N$ such that the obvious diagram
commutes:
\[ \xymatrix {U \ar[dr]^r \ar[rr]^{R}_{\widetilde{\qquad}}&& 
(-\epsilon,0]\times N  \ar[dl]^{p_1}\\
 &   (-\epsilon,0]   &}
\]
The differential of $R$ gives a diffeomorphism between $TM\bigr|_{U}$ and
$\bR\oplus \pi_2^*TN$, where $\pi_2:(-\epsilon,0]\times N\ra N$ is the
projection on the second factor. 

For our purposes, the edge manifold $M$ in the neighborhood $U$ will
be identified with $(-\epsilon,0]\times N$ while the tangent bundle to $M$ in a
neighborhood $U$ will be identified with $\bR\oplus \pi_2^*TN$.  The unit
generator of $\bR$ in this identification will be denoted $\partial_r$.

For the sake of brevity, we denote $U:=(-\epsilon,0]\times N$.

Consider the vector bundles $F:=\VN $ and $F':=\pi^*TB\oplus \bR$ over $N$. 
The Ehresmann connection induces a splitting
\[  \bR\oplus TN\simeq F\oplus F'.
\]
We use the projection $\pi_2:(-\epsilon,0]\times N\ra N$ to pull-back this
bundle to $U$ but rather than writing $\pi_2^*F$, $\pi_2^*F'$ we keep the
notation $F$, $F'$. We have thus in the neighborhood $U$ a splitting
\begin{equation} \label{sp1} TM\bigr|_{U}\simeq F\oplus F'
\end{equation}

The fundamental object of this section is the following bundle endomorphism
defined in terms of the splitting \eqref{sp1}:
\begin{align*} 
\varphi:TM\bigr|_{U}\ra TM\bigr|_{U},&& F\oplus
F'\ni(v,w)\stackrel{\varphi}{\longmapsto}(rv,w).
\end{align*}
Clearly, $\varphi$ is a bundle isomorphism only along $U^c:=U\setminus N$, i.e.,
for $r\neq 0$. 

The model edge degenerate metric is throughout this section:
 \[ h:=dr^2\oplus r^2g^{V}\oplus \pi^*g^B.\] 
The bilinear map
\begin{align*}
h^{\varphi}:TM\bigr|_{U^c}\times TM\bigr|_{U^c}\ra \bR,&&
h^{\varphi}(Y',Z'):=h(\varphi^{-1}(Y'),\varphi^{-1}(Z'))
\end{align*}
extends as a non-degenerate metric on $U$, and $\varphi$ becomes a bundle
isometry for $r\neq0$. Indeed, 
\[ h^{\varphi}=dr^2\oplus g^{V}\oplus \pi^*g^B.\]
\begin{theorem}\label{fundprop} 
The Levi-Civita connection $\nabla^h$ of the model metric has the
property that $\varphi \nabla^h \varphi^{-1}$ extends to a $h^{\varphi}$-metric
connection down to $r=0$.
\end{theorem}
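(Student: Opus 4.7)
The plan is to reduce the theorem to a direct Koszul computation of the connection one-forms of $\nabla^h$ in a frame related to an $h^\varphi$-orthonormal frame by the map $\varphi$. Choose a local $g^V$-orthonormal frame $\{E_i\}$ of $\VN$ and the horizontal lift $\{E_\alpha\}$ of a local $g^B$-orthonormal frame on $B$, pulled back to $U$ so that $[\partial_r,E_i]=[\partial_r,E_\alpha]=0$, and set $E_0:=\partial_r$. Then $\{E_0,E_\alpha,E_i\}$ is a smooth $h^\varphi$-orthonormal frame on $U$, while its rescaled companion $\{\tilde E_A\}:=\{\partial_r,E_\alpha,r^{-1}E_i\}$ is $h$-orthonormal, defined only on $U^c:=U\setminus\{r=0\}$. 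The decisive observation is that $\varphi$ intertwines the two frames: $\varphi(\tilde E_A)=E_A$ for every $A$.

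This intertwining collapses the problem. If $\omega^B_A$ denote the connection one-forms of $\nabla^h$ in the tilde frame, $\nabla^h_X\tilde E_A=\sum_B\omega^B_A(X)\tilde E_B$, a one-line verification gives $(\varphi\nabla^h\varphi^{-1})_X E_A=\sum_B\omega^B_A(X)E_B$, so the conjugated connection is governed in the untilde frame by exactly the same one-forms. Moreover the skew-symmetry $\omega^A_B+\omega^B_A=0$, coming from $h$-compatibility of $\nabla^h$ in the $h$-orthonormal tilde frame, translates verbatim to skew-symmetry in the $h^\varphi$-orthonormal untilde frame, so once each $\omega^B_A$ is shown to extend smoothly across $\{r=0\}$, the extended connection is automatically $h^\varphi$-metric. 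Hence the theorem reduces to the smoothness statement for the forms $\omega^B_A$.

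To prove smoothness I would apply the orthonormal Koszul identity $2\omega^B_A(X)=h([X,\tilde E_A],\tilde E_B)+h([\tilde E_B,X],\tilde E_A)+h([\tilde E_B,\tilde E_A],X)$ and evaluate on smooth test vectors $X\in\{E_0,E_\alpha,E_j\}$ rather than on the singular $\tilde E_j$. The potentially troublesome brackets are $[\partial_r,\tilde E_i]=-r^{-1}\tilde E_i$ and $[\tilde E_i,\tilde E_j]=r^{-1}c^k_{ij}\tilde E_k$, where $c^k_{ij}$ are the structure functions of $\{E_i\}$ on the fibre (the bracket $[\tilde E_\alpha,\tilde E_i]=r^{-1}[E_\alpha,E_i]$ is already smooth, since $[E_\alpha,E_i]$ is vertical and hence an $O(r)$ multiple of the $\tilde E_j$). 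One then checks block by block that either the $r^{-1}$ factors cancel outright by symmetry (e.g.\ $2\omega^k_i(E_0)=-r^{-1}\delta_{ik}+r^{-1}\delta_{ik}=0$) or the surviving $r^{-1}$ is absorbed by the matching factor of $r$ in $E_j=r\tilde E_j$, yielding in the limit smooth expressions built from the fibrewise Levi-Civita Christoffels of $g^V$, the pulled-back Christoffels of $g^B$, the curvature of the Ehresmann connection, and the second fundamental form of $\partial M$ (for instance $\omega^0_i(E_j)|_{r=0}=-\delta_{ij}$ and $\omega^k_i(E_j)|_{r=0}$ is precisely the fibre Christoffel $\Gamma^k_{ji}$).

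The main obstacle I anticipate is not depth but patient bookkeeping: for each of the nine index blocks one has to match every singular bracket with a compensating factor of $r$ from the frame rescaling or from the value $h(E_i,E_j)=r^2\delta_{ij}$, and verify that the net power of $r$ is non-negative. The generalisation to smoothly $r$-dependent $g^V(r)$ only adds smooth terms proportional to $\tfrac12\dot g^V(r)$ in the Koszul expression and leaves the balancing argument intact; it is this explicit form of the limiting one-forms that one uses as the basis for the perturbative extension stated in Theorem \ref{theorem02}.
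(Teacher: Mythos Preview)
Your approach is correct and amounts to the same Koszul computation as the paper, organised differently. The paper introduces an auxiliary connection $\nabla'=d\oplus\bigl[(\partial_r+r^{-1})dr+\nabla^{\VN}\bigr]\oplus\pi_2^*\pi^*\nabla^B$, observes directly that $\varphi\nabla'\varphi^{-1}$ extends, and then analyses the $1$-form $\eta=\nabla^h-\nabla'$ block by block with respect to $F\oplus F'$, showing that in the decomposition \eqref{md1} the entries $A_1,A_4,rA_2,r^{-1}A_3$ are smooth at $r=0$ via Koszul computations \eqref{B31}--\eqref{B34}. Your direct frame computation bypasses $\nabla'$: the intertwining $\varphi(\tilde E_A)=E_A$ really does identify the connection $1$-forms of $\nabla^h$ in the $h$-orthonormal frame with those of $\varphi\nabla^h\varphi^{-1}$ in the $h^\varphi$-orthonormal frame, and block-by-block smoothness of the $\omega^B_A$ is the same bookkeeping as the paper's $A_i$ analysis. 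What the paper's organisation buys is that $\nabla'$ reappears explicitly later (Corollary \ref{Cor72}, Proposition \ref{Tcorfac1}), so isolating it here is not wasted effort; your route is cleaner for this theorem alone.

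One slip to fix: the truncated Koszul identity $2\omega^B_A(X)=h([X,\tilde E_A],\tilde E_B)+h([\tilde E_B,X],\tilde E_A)+h([\tilde E_B,\tilde E_A],X)$ holds only when $X$ itself belongs to the $h$-orthonormal frame $\{\tilde E_C\}$; for $X=E_j=r\tilde E_j$ the derivative terms $\tilde E_A h(X,\tilde E_B)-\tilde E_B h(X,\tilde E_A)$ do not vanish (they contribute $\pm\delta_{A0}\delta_{jB}$). The clean way to say what you mean is: compute $\omega^B_A(\tilde E_C)$ via the orthonormal Koszul formula, then use $C^\infty$-linearity to write $\omega^B_A(E_j)=r\,\omega^B_A(\tilde E_j)$, which is exactly the ``absorption'' you describe. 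Your stated values (e.g.\ $\omega^0_i(E_j)|_{r=0}=-\delta_{ij}$) are correct once this is straightened out.
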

\begin{proof} 
We will compare the Levi-Civita connection of $\nabla^h$ with the following
connection:
 \begin{equation}\label{nabla'} \nabla':=d\oplus
\left[\left(\frac{\partial}{\partial
r}+\frac{1}{r}\right)dr+\nabla^{\VN }\right]\oplus \pi_2^*\pi^*\nabla^B
\end{equation}
 on the vector bundle
 $TM\bigr|_{U^c}= \bR\oplus \pi_2^*\VN \oplus \pi_2^*\pi^*TB$ where
$\pi_2:(-\epsilon,0]\times N\ra N$ is the projection.

In \eqref{nabla'}, the connection $\nabla^{\VN }$ is the projection of the
Levi-Civita connection of any slice $\{r\}\times N$ onto $\pi_2^*\VN $. It does
not depend on $r$ since the projection of the
Levi-Civita connection of a Riemannian submersion onto the vertical bundle does
not depend on the choice of the horizontal metric (Prop. 10.2 in \cite{BGV}),
while the Levi-Civita connection of the slice $\{r\}\times N$ is the same for
the metrics $r^2g^{V}\oplus \pi^*g^B$ and $g^{V}\oplus
r^{-2}\pi^*g^B$.

We emphasize that the differential operator $\frac{\partial}{\partial
r}+\frac{1}{r}$ acts  on families of sections 
\[(Y_r)_{r\in(-\epsilon,0]}\in \Gamma(\VN )\] which can alternatively be seen as
sections of $\pi_2^*\VN $ (where $\pi_2:(-\epsilon,0]\times N\ra N$ is the
projection), while $\nabla^{\VN }$ is used to differentiate only in the $TN$
directions.

It follows from the Koszul relation (see \eqref{nry} and \eqref{nry1}) that the
$\pi_2^*\VN $ component of $\nabla'$ is actually the orthogonal projection of
$\nabla^h$ onto $\pi_2^*\VN $. This implies that $\nabla'$ is $h$-compatible
(as $\pi^*\nabla^B$ is clearly $\pi^*g^B$-compatible). As a consequence,
$\varphi\nabla'\varphi^{-1}$ is $h^{\varphi}$-compatible. 

It is easy to check that $\varphi\nabla'\varphi^{-1}$ extends smoothly to $r=0$
since
$\nabla^{\VN }$ commutes with multiplication by $r^{-1}$ and 
\[ \frac{\partial Y_r}{\partial r}+\frac{Y_r}{r}=\frac{1}{r}\frac{\partial
(rY_r)}{\partial r}.
\]
Moreover, $\varphi (d\oplus\pi^*\nabla^B)\varphi^{-1}=d\oplus\pi^*\nabla^B$,
since
$\varphi$ acts as the identity on $F'$.

In order for the $1$-form $\eta:=\nabla^h-\nabla'$ to have the property that
$\varphi\eta(X)\varphi^{-1}$ extends smoothly for every choice of $X\in
\Gamma(TM\bigr|_{U})$, it is enough that in the decomposition
\begin{equation}\label{md1} \eta(X):=\left(\begin{array}{cc} A_1(X)& A_2(X)\\
 A_3(X)& A_4(X)\end{array}\right): \begin{array}{c} F\\\oplus \\ F'
 \end{array}\ra  \begin{array}{c} F\\ \oplus \\ F'
 \end{array}
\end{equation}
the blocks $A_{1}(X)$, $A_4(X)$, $rA_2(X)$ and $r^{-1}A_3(X)$ extend smoothly 
all the way down to $r=0$.
 
Clearly $A_1\equiv 0$ since the orthogonal projections of $\nabla^h$ and
$\nabla'$ on $F$ coincide.
 
 Then metric compatibility implies for $Y\in \Gamma(F')$ and $Z\in \Gamma(F)$
\begin{align*}  r^2\langle A_2(X)(Y),Z\rangle_{\VN }={}&\langle
A_2(X)(Y),Z\rangle_h=-\langle Y,
A_3(X)(Z)\rangle_h\\
={}& -\langle Y, A_3(X)(Z)\rangle_{F'}=-\langle
A_3^T(X)(Y),Z\rangle_{\VN }
\end{align*}
 where the transpose $A_3^T$ is computed with respect to the metric
$h^{\varphi}$,
independent of $r$. Hence
  \[rA_2(X)=-r^{-1}A_3^T(X).\]
Thus, it is enough to prove that $r^{-1}A_3(X)$ extends smoothly to $r=0$.
 
To see that the remaining relations hold we look again at the Koszul relation: 
\[ 2\langle \nabla_X^hY,Z\rangle_h=\langle
[X,Y],Z\rangle_h-\langle[Y,Z],X\rangle_h+\langle [Z,X],Y\rangle_h+X\langle
Y,Z\rangle_h+Y\langle Z,X\rangle_h-Z\langle X,Y\rangle_h.
\]
For $X=\partial_r,$ $Y=Y_r\in\Gamma(\pi_2^*\VN )$, $Z=Z_r\in \Gamma(\pi_2^*\VN
)$,
\[ 2r^2\left\langle \nabla_{\partial_r}Y,Z\right\rangle_{g^{\VN
}}=r^2\left\langle
\frac{\partial{Y}}{\partial r},Z \right\rangle_{g^{\VN }}-r^2\left\langle
\frac{\partial Z}{\partial r},Y\right\rangle_{g^{\VN }}+\frac{\partial}{\partial
r}\left[r^2\langle Y,Z\rangle_{g^{\VN }}\right].
\]
We end up with
\[2r^2\left\langle \nabla_{\partial_r}Y,Z\right\rangle_{g^{\VN
}}=2r^2\left\langle
\frac{\partial Y}{\partial r},Z \right\rangle_{g^{\VN }}+2r\left\langle
Y,Z\right\rangle_{g^{\VN }}.
\]
Hence
\begin{equation}
\label{nry} \nabla_{\partial_r}Y=\frac{\partial Y}{\partial r}+\frac{Y}{r}
\end{equation}
Taking $X=X_r\in \Gamma(\pi_2^* TN)$  with $Y=Y_r\in \Gamma(\pi_2^*\VN )$,
$Z=Z_r\in \Gamma(\pi_2^*\VN )$ then clearly
\begin{equation}
\label{nry1} \langle\nabla_X^h Y,Z\rangle=\langle\nabla^{\VN }_XY,Z\rangle.
\end{equation}
One verifies easily that the orthogonal projection of $\nabla^h$ onto $\bR$, the
tangent bundle of the foliation via integral curves of $\partial_r$, is the
trivial connection
$d$.

Recall that $\pi^*\nabla^B$ is not the orthogonal projection of $\nabla^h$ onto
$\pi^*TB\simeq \HN $. Let $\nabla^{\HN }$ be this projection. It follows from
Lemma
\ref{nl1} for the Riemannian submersion $M\bigr|_{U}\ra B$ that for $X\in 
\Gamma(\bR\oplus\pi_2^*TN)$ and $Y,\; Z \in \Gamma(\pi_2^*\pi^*TB)$:
\[\langle\nabla^h_XY,Z\rangle_h=\langle\nabla_X^{\HN }Y,Z\rangle_h=\left\langle
\pi^*\nabla^B_XY,Z\right\rangle_h-\frac{1}{2}\left\langle P^{\bR\oplus
\VN }(X),[Y,Z]\right\rangle_h
\]

When $X=\partial_r$ since $[Y,Z]\in \Gamma(\pi_2^*TN)$ (one has a foliation via
hypersurfaces $\{r\}\times N$) the last term is zero.

When $X\in \Gamma(\pi_2^*TN)$ then 
\[\langle\nabla^h_XY,Z\rangle_h=\left\langle
\pi^*\nabla^B_XY,Z\right\rangle_h-\frac{r^2}{2}\left\langle
P^{\VN }(X),[Y,Z]\right\rangle_{g^{\VN }}
\]
and the right hand side is smooth at $r=0$. This describes the bottom block
diagonal component  of $A_4(X)$ in \eqref{md1} relative to the decomposition
$F'=\bR\oplus \pi^*TB$. The other diagonal block of $A_4$ is obviously $0$. The
off-diagonal terms of the skew-symmetric $A_4(X)$ are of type
\[ \langle\nabla_X \partial_r,Y\rangle_h \mbox{ and its negative }
\langle\nabla_XY,\partial_r\rangle_h
\] 
where $X\in \Gamma(\bR\oplus\pi_2^*TN)$, $Y\in\Gamma(\pi_2^*\HN )$. For
$X=\partial_r$ one gets obviously $0$ and  Lemma \ref{L1} gives for $X\in
\Gamma(\pi_2^*TN)$:
\[ \langle \nabla_X \partial_r,Y\rangle_h=\frac{1}{2}(L_{\partial_r}h)(X,Y)=r
\langle X,Y\rangle_{\VN }=0.\] 
In other words, if $\widetilde{\Omega}:\VN \times \HN \ra \HN $ is the morphism
induced by the curvature $\Omega$ of the Ehresmann connection of the Riemannian
submersion $\pi:N\ra B$ with the metric $g^{V}\oplus \pi^*g^B$ as in Lemma
\ref{nl1}, then  for $X\in \Gamma(\bR\oplus \pi_2^*TN)$, $Y\in \Gamma(F')$ one
has:
\[  A_4(X)(Y)=-\frac{r^2}{2}\widetilde{\Omega}(P^{\VN }(X),P^{\HN }(Y)).
\]
Finally,  for $Y\in \Gamma(\pi_2^*\VN )$, $Z\in \Gamma(F')$ and $X\in
\Gamma(\bR\oplus \pi_2^*TN)$ we compute 
 \[\langle A_3(X)(Y),Z\rangle_h=\langle \nabla_X^hY,Z\rangle_h.\]
For $X=\partial_r$, $Z\in \pi_2^*\HN $ one gets from the Koszul formula
 \begin{equation}\label{B31} 2\left\langle
\nabla_{\partial_r}Y,Z\right\rangle_h= \left\langle {\partial _rY},Z\right
\rangle_h-\left\langle{\partial_r Z},Y\right\rangle_h=0.
 \end{equation}
 The vanishing holds also for $X=\partial_r$, $Z=\partial_r$. 
 
 For $X\in \Gamma(\pi_2^*TN)$, $Z=\partial_r$ we get:
 \begin{equation}\label{B32}  \langle A_3(X)(Y),\partial_r\rangle=\langle
\nabla_X^hY,\partial_r\rangle_h={\II_{r}}(X,Y)=-r\langle X,Y\rangle_{\VN }.
 \end{equation}
 For $X\in \Gamma(\pi_2^*\VN )$, $Z\in \Gamma(\pi_2^*\HN )$ we get the relation:
 \begin{equation}\label{B33}\langle A_3(X)(Y),Z\rangle_{\HN }=-\langle Y,
P^{\VN }(\nabla_X^hZ)\rangle_{h}=r^2\langle
Y,P^{\VN }([Z,X])-\nabla_Z^{\VN }X\rangle_{\VN }.
\end{equation}
 For $X\in \Gamma(\pi_2^*\HN )$, $Z\in \Gamma(\pi_2^*\HN )$ we get the curvature
of
the Ehresmann connection:
 \begin{equation}\label{B34} \langle A_3(X)(Y),Z\rangle_{\HN }=-\langle Y,
P^{\VN }(\nabla_{X}^hZ)\rangle_h=-r^2\langle Y, \Omega(X,Z)\rangle_{\VN }.
 \end{equation}
It is now clear from \eqref{B31},  \eqref{B32}, \eqref{B33} and \eqref{B34}
that $\frac{A_3(X)}{r}$ extends for any smooth vector fields
$X,Y:(-\epsilon,0]\times N\ra TM\bigr|_{U}$.
\end{proof}

\begin{cor}\label{Cor72} The restriction to $TM\bigr|_{\partialM }=\bR\oplus
\pi_2^*\VN \oplus \pi_2^*\pi^* TB$ (i.e., to $r=0$) of the extended connection
$\varphi \nabla^h\varphi^{-1}$
coincides with the connection
 \begin{equation}\label{extat0} \left(\begin{array}{c}d\\
\frac{\partial}{\partial r} dr+\nabla^{\VN }\\
\pi^*\nabla^B\end{array}\right)+\left(\begin{array}{c |c |c} 0 &
\langle\bullet,\cdot \rangle_{\VN } &0\\
  -\langle \bullet,\cdot \rangle_{\VN } &0 &0\\
   0&0&0 \end{array}\right)
 \end{equation}
 where the matrix represents a $1$-form (the $\bullet$ entry) with values in
$\End(\bR\oplus \pi_2^*\VN \oplus \pi_2^*\pi^* TB)$.
 \end{cor}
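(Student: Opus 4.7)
The plan is to read the $r=0$ limit directly from the decomposition $\nabla^h=\nabla'+\eta$ already set up in the proof of Theorem \ref{fundprop}, where $\nabla'$ is given by \eqref{nabla'} and the block entries $A_i$ of $\eta$ are as in \eqref{md1}. Corollary \ref{Cor72} then splits naturally into two pieces: the restriction of $\varphi\nabla'\varphi^{-1}$, which will contribute the first (diagonal) matrix in the statement, and the restriction of $\varphi\eta\varphi^{-1}$, which will contribute the off-diagonal correction.

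For the diagonal piece I would just use that $\varphi$ acts as multiplication by $r$ on $F=\pi_2^*\VN $ and as the identity on $F'=\bR\partial_r\oplus\pi_2^*\pi^*TB$, together with the short calculation $\varphi(\partial_r+r^{-1})\varphi^{-1}Y=\partial_rY$ for $Y\in\Gamma(\pi_2^*\VN )$ (valid because $\nabla^{\VN }$ commutes with multiplication by $r$). This conjugates $\nabla'$ into a block-diagonal connection on $\bR\oplus \pi_2^*\VN \oplus\pi_2^*\pi^*TB$ with blocks $d$, $\partial_r\,dr+\nabla^{\VN }$, and $\pi^*\nabla^B$, valid everywhere on $U$, including at $r=0$.

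For the off-diagonal piece, I would exploit the fact that conjugation by $\varphi$ multiplies $A_2$ by $r$ on the left and $A_3$ by $r^{-1}$ on the right while leaving $A_1,A_4$ unchanged, so the relevant $r=0$ limits are $A_1|_{r=0}$, $A_4|_{r=0}$, $rA_2|_{r=0}$ and $r^{-1}A_3|_{r=0}$. Theorem \ref{fundprop} already delivers $A_1\equiv 0$ and $A_4(X)Y=-\tfrac{r^2}{2}\widetilde{\Omega}(P^{\VN }X,P^{\HN }Y)$, which both vanish at $r=0$. The formulas \eqref{B31}--\eqref{B34} for $A_3$ show that $A_3(X)\equiv 0$ when $X=\partial_r$ or $X\in\pi_2^*\HN $, while for $X\in\pi_2^*\VN $ the $\pi^*TB$-component of $A_3(X)$ is $O(r^2)$ and only the $\partial_r$-component, equal to $\II_r(X,\cdot)=-r\langle X,\cdot\rangle_{\VN }$, is genuinely of order $r$. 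Dividing by $r$ isolates a single non-trivial limit map $\pi_2^*\VN \to\bR\partial_r$ given (up to sign) by $\langle\bullet,\cdot\rangle_{\VN }$; the block $rA_2|_{r=0}$ is then forced by the metric-compatibility identity $rA_2(X)=-r^{-1}A_3(X)^T$, with transpose computed in the $r$-independent metric $h^\varphi$, as already established in Theorem \ref{fundprop}.

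Assembling these three ingredients, the surviving off-diagonal contribution sits exactly in the $\bR\leftrightarrow\pi_2^*\VN $ blocks and matches the second matrix in the statement. The only point that needs care is tracking the source/target conventions and signs so that the two mutually adjoint blocks assemble into the displayed matrix verbatim; no new geometric input is required beyond what the proof of Theorem \ref{fundprop} already provides.
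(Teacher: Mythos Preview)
Your proposal is correct and follows essentially the same route as the paper's proof, which simply observes that the only non-trivial term in $\varphi(\nabla^h-\nabla')\varphi^{-1}$ at $r=0$ comes from \eqref{B32}; you have just unpacked this observation block by block. One small imprecision: you write that $A_3(X)\equiv 0$ for $X\in\pi_2^*\HN$, but \eqref{B34} shows this component is $O(r^2)$, not identically zero---what you need (and what you clearly intend) is that $r^{-1}A_3(X)\bigr|_{r=0}=0$ for such $X$, which does follow.
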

 \begin{proof} The only non-trivial term in the difference $\varphi
(\nabla^h-\nabla')\varphi^{-1}$ comes from relation \eqref{B32}.
 \end{proof}
 \begin{cor}\label{corf} The Pfaffian $\Pf(\nabla^h)$ is a smooth form on $M$
down to the boundary $\{r=0\}$.
 \end{cor}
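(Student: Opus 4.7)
The plan is to exploit the bundle endomorphism $\varphi$ as an intertwiner between the degenerate metric $h$ and the smooth metric $h^\varphi$, together with the gauge-invariance of the Pfaffian.

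First I would observe that for $r \neq 0$, the map $\varphi : TM|_{U^c} \to TM|_{U^c}$ is, by the very definition of $h^\varphi$, a bundle isometry from $(TM|_{U^c}, h^\varphi)$ to $(TM|_{U^c}, h)$. Consequently, if $\{s_i\}$ is a local $h$-orthonormal frame, then $\{\varphi^{-1} s_i\}$ is a local $h^\varphi$-orthonormal frame, and the matrix of $\tilde{\nabla} := \varphi \nabla^h \varphi^{-1}$ acting on $\{\varphi^{-1} s_i\}$ equals the matrix of $\nabla^h$ acting on $\{s_i\}$. Since the formula \eqref{Pfeq} depends only on such matrix entries, we obtain the pointwise identity
\[
\Pf(\nabla^h) = \Pf(\tilde{\nabla}) \qquad \text{on } U^c,
\]
where the right-hand side is computed using the metric $h^\varphi$ (which controls the Berezin integral).

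Next I would invoke Theorem \ref{fundprop}, which asserts precisely that $\tilde{\nabla} = \varphi \nabla^h \varphi^{-1}$ extends to a smooth $h^\varphi$-metric connection on $TM|_U$ all the way down to $r=0$. Since $h^\varphi = dr^2 \oplus g^V \oplus \pi^* g^B$ is itself a smooth non-degenerate metric on $U$, its curvature $F(\tilde{\nabla})$ and hence $\Pf(\tilde{\nabla})$ is a globally smooth differential form on $U$, including at the boundary.

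Finally, combining the two steps: the smooth form $\Pf(\tilde{\nabla})$ defined on all of $U$ agrees with $\Pf(\nabla^h)$ on the open set $U^c$ where the latter is a priori defined; by continuity this extension is unique, so $\Pf(\nabla^h)$ extends smoothly across $\{r=0\}$. There is essentially no obstacle to overcome here: all the work has been done in Theorem \ref{fundprop}, and the present corollary amounts to packaging that result together with the elementary observation that $\varphi$, while degenerate as a bundle map of $TM$, is an honest isometry between the two metrics $h^\varphi$ and $h$, so that the Pfaffian is invariant under this change of gauge.
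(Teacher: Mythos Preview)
Your approach is exactly the paper's: use that $\varphi$ is a bundle isometry between $(TM|_{U^c},h)$ and $(TM|_{U^c},h^\varphi)$, so that $\Pf(\nabla^h)$ agrees (up to a sign) with $\Pf(\varphi\nabla^h\varphi^{-1})$ on $U^c$, and then invoke Theorem~\ref{fundprop} to conclude smoothness. One small slip: by the definition $h^\varphi(Y',Z')=h(\varphi^{-1}Y',\varphi^{-1}Z')$ the isometry goes $\varphi:(TM,h)\to(TM,h^\varphi)$, not the direction you wrote, so it is $\{\varphi s_i\}$ (not $\{\varphi^{-1}s_i\}$) that is $h^\varphi$-orthonormal when $\{s_i\}$ is $h$-orthonormal; with this correction the matrix identity and the rest of your argument go through verbatim, and the possible global sign (from $\det\varphi=r^f$) is locally constant on $U^c$ and hence irrelevant for smoothness.
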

 \begin{proof} The map $\varphi:(TM\bigr|_{U^c},h)\ra
(TM\bigr|_{U^c},h^{\varphi})$ is a bundle isometry. Hence on $U^c$,
$\Pf(\nabla^h)$ is, up to a sign,   equal to
$\Pf(\varphi\nabla^h\varphi^{-1})$. 
 \end{proof}

We consider now a perturbation $g$ of $h$, i.e.,  a bilinear and symmetric form
on $TM$ that is degenerate only along $N$ in a sense  made precise in Definition
\ref{def10}.

 Clearly there exists an $h$-symmetric endomorphism $C\in
\Gamma(\End(TM\bigr|_{U^c}))$ such that
\begin{align*} 
g(X,Y)=h(CX,Y)=h(X,CY),&& (\forall) X,Y\in TM\bigr|_{U^c}.
\end{align*}

The next Lemma linking the two Levi-Civita connections is fundamental for our
computations.
\begin{lem}[Christoffel formula]\label{Crf} Let $\nabla^h$ and $\nabla^g$ be the
corresponding Levi-Civita connections on $TM\bigr|_{U^c}$. Then the $1$-form
$\omega:TM\bigr|_{U^c}\ra  \End(TM\bigr|_{U^c})$ defined by
\[ \omega(X)(Y)=\nabla^g_XY-\nabla_X^hY,
\]
satisfies:
\[
h(C\omega(X)(Y),Z)=\frac{1}{2}\left(h((\nabla_X^hC)Y,Z)+h((\nabla_Y^hC)X,
Z)-h((\nabla^h_Z C)X,Y\right).
\]
\end{lem}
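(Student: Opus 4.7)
The proof should be a direct computation comparing the Koszul formulas for $\nabla^g$ and $\nabla^h$, using the identification $g(U,V)=h(CU,V)$ and the facts that $C$ is $h$-symmetric and $\nabla^h h=0$.

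The plan is to start from the Koszul formula for $\nabla^g$,
\[
2g(\nabla^g_XY,Z)=Xg(Y,Z)+Yg(Z,X)-Zg(X,Y)+g([X,Y],Z)-g([Y,Z],X)+g([Z,X],Y),
\]
and rewrite the left hand side as $2h(C\nabla^g_XY,Z)=2h(C\nabla^h_XY,Z)+2h(C\omega(X)(Y),Z)$. On the right I would replace every instance of $g$ by $h(C\cdot,\cdot)$ and then expand $Xh(CY,Z)=h((\nabla^h_XC)Y,Z)+h(C\nabla^h_XY,Z)+h(CY,\nabla^h_XZ)$, and analogously for the $Y$- and $Z$-derivative terms; the three $(\nabla^h C)$ contributions are precisely those appearing in the target identity. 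Use the torsion-free property $[X,Y]=\nabla^h_XY-\nabla^h_YX$ on the three bracket terms.

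Next I would exploit the $h$-symmetry of $C$ to rewrite every term of the form $h(CU,\nabla^h_\bullet V)$ as $h(C\nabla^h_\bullet V,U)$. After this bookkeeping, the six ``non-derivative-of-$C$'' terms arising from the $Xg(Y,Z)$-type expansions and the six terms from the bracket expansions combine into exactly the right hand side of the Koszul formula for $\nabla^h$ applied to $2h(C\nabla^h_XY,Z)$, with all the remaining $h(C\nabla^h_\bullet\cdot,\cdot)$ pieces pairing off and cancelling. What survives is
\[
2h(C\nabla^g_XY,Z)=2h(C\nabla^h_XY,Z)+h((\nabla^h_XC)Y,Z)+h((\nabla^h_YC)X,Z)-h((\nabla^h_ZC)X,Y),
\]
and dividing by two yields the claim.

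The only genuinely delicate point is bookkeeping: the calculation involves on the order of a dozen terms, and one must be careful with signs when using torsion-freeness on $[Y,Z]$ and $[Z,X]$ and with the order of arguments when applying $h$-symmetry of $C$. There is no conceptual obstacle; the identity is the exact analogue of the classical formula expressing the difference of two Levi-Civita connections in terms of the covariant derivative of the metric endomorphism relating them, and the proof is a purely algebraic manipulation made possible by $\nabla^h h=0$.
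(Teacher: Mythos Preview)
Your approach is correct: expanding the Koszul formula for $\nabla^g$, writing $g=h(C\cdot,\cdot)$, differentiating via $\nabla^h$-compatibility with $h$, and replacing brackets by $\nabla^h_AB-\nabla^h_BA$ indeed leaves exactly $2h(C\nabla^h_XY,Z)$ plus the three $(\nabla^hC)$-terms. One small quibble: the twelve non-$(\nabla^hC)$ terms do not literally reassemble into a Koszul formula for $h$; rather, ten of them cancel in pairs and the remaining two add up to $2h(C\nabla^h_XY,Z)$. Your final displayed identity is correct regardless.

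The paper organizes the same computation differently. Instead of expanding Koszul directly, it first records two structural facts: $\omega(X)(Y)=\omega(Y)(X)$ (from torsion-freeness of both connections) and $\omega(X)^TC+C\omega(X)=\nabla^h_XC$ (obtained by subtracting the $h$- and $g$-compatibility identities for $X\,g(Y,Z)$). It then observes that a trilinear form symmetric in the first two slots and antisymmetric in the last two vanishes, so these two equations determine $C\omega(X)$ uniquely, and the stated formula is the solution found by the usual cyclic-sum trick. Your route is a single brute-force expansion, while the paper's route isolates the two ingredients (symmetry and compatibility) before solving; both arrive at the same place and neither is materially shorter.
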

\begin{proof} Notice first that due to the symmetry of the Levi-Civita
connections one has:
\begin{equation}\label{eq1s9} \omega(X)(Y)=\omega(Y)(X)
\end{equation}
and therefore $C\omega(X)(Y)=C\omega(Y)(X)$. Then from
\[ Xh(Y,CZ)=h(\nabla^h_XY,CZ)+h(Y,\nabla^h_X(CZ)) \;\mbox{and}
\]
\[ Xg(Y,Z)=g(\nabla^g_XY,Z)+g(Y,\nabla^g_XZ)
\]
which translates into
\[ Xh(Y,CZ)=h(\nabla^g_XY,CZ)+h(Y,C\nabla^g_XZ)
\]
one gets by subtraction:
\[ h(\nabla^h_XY-\nabla^g_XY,CZ)=h(Y,C\nabla^g_XZ-\nabla_X^h(CZ)).
\]
Taking $\nabla^h_X(CZ)=C(\nabla_X^hZ)+(\nabla_X^hC)(Z)$  we get:
\[ h(\omega(X)(Y),CZ)+h(Y,C\omega(X)(Z))=h(Y,(\nabla^h_X C)(Z)).
\]
or
\begin{equation}\label{eq2s9} \omega(X)^TC+C\omega(X)=\nabla^h_X C.
\end{equation}
Notice that the system  \eqref{eq1s9} and \eqref{eq2s9} has a unique solution
for $C\omega(X)$ due to the well-known fact that a trilinear map which is
symmetric in the first two variables and anti-symmetric in the last two
variables is zero. Finding this solution is simple linear algebra. 
\end{proof}

 We know already that $\varphi\nabla^h\varphi^{-1}$ extends to $r=0$. Let 
\[ C=\begin{bmatrix} C_1& C_2 \\
 C_3& C_4\end{bmatrix}: \begin{bmatrix} F\\ F'\end{bmatrix}\ra
\begin{bmatrix} F\\ F'\end{bmatrix}
\] 
be the block decomposition of $C$. Then 
\[C^{\varphi}:=\varphi C\varphi^{-1}=\begin{bmatrix} C_1& rC_2 \\
 r^{-1}C_3& C_4\end{bmatrix}
\] 
is symmetric with respect to the $h^{\varphi}$-metric. In other words $C_3=r^2
C_2^T$ where the transpose is computed with respect to $h^{\varphi}$.
We have the following obvious remark.
\begin{lem}\label{il1} If $\varphi C\varphi^{-1}$ extends smoothly to
$TM\bigr|_{U}$, then
$g^{\varphi}(\cdot,\cdot):=g(\varphi^{-1}(\cdot),\varphi^{-1}(\cdot))$ extends
and
\[ g^{\varphi}(\cdot,\cdot)=h^{\varphi}(C^{\varphi}\cdot,\cdot).
\]
\end{lem}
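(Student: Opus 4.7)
The plan is to prove the identity $g^{\varphi}(X',Y')=h^{\varphi}(C^{\varphi}X',Y')$ by a direct algebraic manipulation on $U^c$ (where $\varphi$ is invertible and $C$ is well-defined), and then to use this identity together with the extension hypotheses on $C^{\varphi}$ and the known smooth extension of $h^{\varphi}$ in order to conclude that $g^{\varphi}$ extends smoothly.

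First, I would note that on $U^c$ the Riemannian metric $h$ is non-degenerate, so the defining relation $g(X,Y)=h(CX,Y)$ uniquely determines the $h$-symmetric endomorphism $C\in\Gamma(\mathrm{End}(TM|_{U^c}))$. Starting from the definition $g^{\varphi}(X',Y'):=g(\varphi^{-1}X',\varphi^{-1}Y')$ and using $C=\varphi^{-1}C^{\varphi}\varphi$, I would compute on $U^c$:
\[
g^{\varphi}(X',Y')=h(C\varphi^{-1}X',\varphi^{-1}Y')=h(\varphi^{-1}C^{\varphi}X',\varphi^{-1}Y')=h^{\varphi}(C^{\varphi}X',Y'),
\]
where the last step is just the definition of $h^{\varphi}$. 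This establishes the desired formula on $U^c$.

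For the smooth extension, I would then invoke the fact, recalled right before Theorem~\ref{fundprop}, that $h^{\varphi}=dr^2\oplus g^{V}\oplus\pi^{*}g^{B}$ extends to a non-degenerate smooth metric on $TM|_{U}$. By the hypothesis of the lemma, $C^{\varphi}$ extends smoothly to $TM|_{U}$ as well. Therefore the right-hand side $h^{\varphi}(C^{\varphi}\cdot,\cdot)$ is smooth all the way down to $r=0$, and since it agrees with $g^{\varphi}$ on the open dense subset $U^c$, it furnishes the smooth extension of $g^{\varphi}$ to $U$.

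There is no real obstacle here: the entire content is the bookkeeping identity $h(C\varphi^{-1}\cdot,\varphi^{-1}\cdot)=h^{\varphi}(C^{\varphi}\cdot,\cdot)$, which is an instance of the general principle that a metric pulled back by a bundle isomorphism has its endomorphism representation conjugated accordingly. The only point worth emphasizing is that the lemma is conditional: one must verify the smooth extendibility of $C^{\varphi}$ in concrete situations (as will be the case for the perturbations $\tilde{g}$ considered in the sequel), whereas the smoothness of $h^{\varphi}$ is part of the setup.
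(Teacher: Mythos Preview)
Your proof is correct and is precisely the obvious computation the paper has in mind; the paper does not spell out a proof at all, simply labeling the lemma an ``obvious remark'', and your argument is exactly the one-line verification $g^{\varphi}(X',Y')=h(C\varphi^{-1}X',\varphi^{-1}Y')=h(\varphi^{-1}C^{\varphi}X',\varphi^{-1}Y')=h^{\varphi}(C^{\varphi}X',Y')$ on $U^c$, followed by extension.
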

The morphism $\varphi C\varphi^{-1}$ controls the degenerations we are
interested in. 
By Lemma \ref{il1}, saying that
\[ C=I+ f(r)\varphi^{-1}D\varphi
\]
where $D$ is smooth at $r=0$ and $h^{\varphi}$-symmetric and $f$ smooth and
vanishing at $0$ is equivalent to 
\[ g^{\varphi}(\cdot,\cdot)=h^{\varphi}(\cdot,\cdot)+f(r)\alpha(\cdot,\cdot)
\]
for some $\alpha(\cdot,\cdot)$ smooth, bilinear and symmetric on $TM\bigr|_{U}$.

\begin{definition} \label{def10} A perturbation of first (respectively second)
order of $h$ is a bilinear, positive, symmetric $g:TM\bigr|_{U^c}\times
TM\bigr|_{U^c}$ such that the endomorphism $C$ above satisfies:
\[ C=I+r\varphi^{-1}D\varphi,\quad \mbox{resp.} \quad
C=I+r^2\varphi^{-1}D\varphi,
\]
where $D$ is a smooth endomorphism of $TM\bigr|_{U}$, symmetric in the
$h^{\varphi}$ metric. 

Equivalently for $p=1$ (resp. $p=2$)
\[ g^{\varphi}(\cdot,\cdot)=h^{\varphi}(\cdot,\cdot)+r^p\alpha(\cdot,\cdot)
\]
where $\alpha$ is bilinear, symmetric and smooth on $TM\bigr|_{U}$.
\end{definition}
 
\begin{lem}\label{il2} \[\varphi(\nabla^h C)\varphi^{-1}=(\varphi\nabla^h
\varphi^{-1})(\varphi C\varphi^{-1}).\] 
\end{lem}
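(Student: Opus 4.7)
The plan is to treat this as a purely algebraic identity, verified by a direct Leibniz expansion on the locus $U^c$ where $\varphi$ is a genuine bundle isomorphism (so both sides are well-defined in the classical sense). Nothing about the edge geometry, the specific form of $\nabla^h$, or the degeneration of $\varphi$ at $r=0$ enters the argument.

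First I would fix the convention: for a connection $\nabla$ on $TM$ and $C\in\Gamma(\End(TM))$, the induced covariant derivative on $\End(TM)$ is
\[
(\nabla_X C)(Y) \;=\; \nabla_X(CY) - C(\nabla_X Y),
\]
and the conjugated connection $\tilde\nabla := \varphi\nabla\varphi^{-1}$ acts on a section $Z$ of $TM\bigr|_{U^c}$ by $\tilde\nabla_X Z = \varphi\,\nabla_X(\varphi^{-1}Z)$.

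The heart of the argument is then the following one-line computation, performed at an arbitrary $X,Y$ and at any point of $U^c$:
\begin{align*}
\bigl(\tilde\nabla_X(\varphi C\varphi^{-1})\bigr)(Y)
&= \tilde\nabla_X\bigl(\varphi C\varphi^{-1}Y\bigr) - \varphi C\varphi^{-1}\bigl(\tilde\nabla_X Y\bigr)\\
&= \varphi\,\nabla_X\bigl(\varphi^{-1}\varphi C\varphi^{-1}Y\bigr) - \varphi C\varphi^{-1}\,\varphi\,\nabla_X(\varphi^{-1}Y)\\
&= \varphi\,\nabla_X\bigl(C\varphi^{-1}Y\bigr) - \varphi C\,\nabla_X(\varphi^{-1}Y)\\
&= \varphi\bigl[(\nabla_X C)(\varphi^{-1}Y)\bigr]
= \bigl(\varphi(\nabla_X C)\varphi^{-1}\bigr)(Y).
\end{align*}
The inner factors of $\varphi$ and $\varphi^{-1}$ telescope, and the two pairs of terms recombine into the Leibniz form for $\nabla_X C$.

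This establishes the identity on $U^c$, which is what the statement asserts. (For the subsequent use of the lemma, the point is that Theorem \ref{fundprop} together with the perturbation hypothesis of Definition \ref{def10} guarantees that both $\varphi\nabla^h\varphi^{-1}$ and $\varphi C\varphi^{-1}$ extend smoothly to $\{r=0\}$, so the right-hand side of the displayed identity extends smoothly down to the boundary and hence so does $\varphi(\nabla^h C)\varphi^{-1}$.) There is no real obstacle: the lemma is a formal naturality statement for the conjugation action of a bundle automorphism on the pair (connection, endomorphism).
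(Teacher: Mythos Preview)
Your proof is correct and follows essentially the same approach as the paper: a direct Leibniz expansion on $U^c$ showing that conjugation by $\varphi$ intertwines the induced covariant derivative on endomorphisms. The paper writes out both sides separately and observes they coincide, while you chain the equalities from one side to the other, but the content is identical.
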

\begin{proof} It follows from the next equalities that hold for any $X$ and $Y$:
\begin{align*}\varphi(\nabla^h
_XC)\varphi^{-1}(Y)={}&\varphi(\nabla^h_X(C\varphi^{-1}(Y))-\varphi
C(\nabla^h_X(\varphi^{-1}(Y)))
\\
(\varphi\nabla^h_X \varphi^{-1})(\varphi
C\varphi^{-1})(Y)={}&\varphi(\nabla^h(\varphi^{-1}\varphi C
\varphi^{-1}(Y)))-\varphi C\varphi^{-1}(\varphi \nabla^h
(\varphi^{-1}(Y))).\qedhere
\end{align*}
\end{proof}

\begin{theorem}\label{Te1}
Let $g$ be a perturbation of a model edge metric $h$.
\begin{itemize}
\item[(i)] For perturbations of first order, the connection $\varphi\nabla^g
\varphi^{-1}$ extends at $r=0$.
\item[(ii)] For perturbations of second order the connection the extension of
$\varphi\nabla^g \varphi^{-1}$ coincides on $TM\bigr|_{\partialM }$ with
$\varphi \nabla^h \varphi^{-1}$.
\end{itemize}
\end{theorem}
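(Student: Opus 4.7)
My plan is to compare Levi-Civita connections via the Christoffel formula of Lemma \ref{Crf}, exploiting the smoothness of $\varphi\nabla^h\varphi^{-1}$ from Theorem \ref{fundprop}. Set $\omega(X) := \nabla^g_X - \nabla^h_X$ and $\hat\omega(X) := \varphi\,\omega(X)\,\varphi^{-1}$; since $\varphi\nabla^g\varphi^{-1} = \varphi\nabla^h\varphi^{-1} + \hat\omega$, it suffices to show that $\hat\omega$ extends to a smooth endomorphism-valued $1$-form on $TM|_U$ for part (i), and vanishes at $r=0$ when the perturbation is of second order for part (ii). Let $\tilde\nabla := \varphi\nabla^h\varphi^{-1}$ and $\hat C := \varphi C\varphi^{-1} = I + r^p D$, with $D$ smooth and $h^\varphi$-symmetric. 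Because $\hat C|_{r=0} = I$, the endomorphism $\hat C$ is smoothly invertible in a neighborhood of $\partial M$.

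Next I would conjugate the Christoffel identity of Lemma \ref{Crf} by $\varphi$. Using $h(A,B) = h^\varphi(\varphi A, \varphi B)$ and substituting $\tilde Y := \varphi Y$, $\tilde Z := \varphi Z$, the left-hand side becomes $h^\varphi(\hat C\,\hat\omega(X)\tilde Y,\tilde Z)$. By Lemma \ref{il2}, each right-hand term of the form $h((\nabla^h_U C)V, W)$ transforms into $h^\varphi((\tilde\nabla_U \hat C)(\varphi V),\varphi W)$. The term involving $\nabla^h_X C$ is harmless because $X$ appears directly and $\tilde\nabla_X \hat C = p r^{p-1} X(r) D + r^p \tilde\nabla_X D$ is manifestly smooth. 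The two remaining terms contain $\tilde\nabla_{\varphi^{-1}\tilde U}\hat C$ for $U \in \{Y,Z\}$, and this is the singular-looking quantity to be controlled.

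The main obstacle, and the technical heart of the argument, is showing that $\tilde\nabla_{\varphi^{-1}\tilde U}\hat C$ extends smoothly across $r=0$ despite the singularity of $\varphi^{-1}$ on the $F$-summand. Decomposing $\tilde U = \tilde U_F + \tilde U_{F'}$ so that $\varphi^{-1}\tilde U = r^{-1}\tilde U_F + \tilde U_{F'}$, and using that $dr$ annihilates $F$ (so $dr(\varphi^{-1}\tilde U) = dr(\tilde U)$), a direct computation gives
\[
\tilde\nabla_{\varphi^{-1}\tilde U}\hat C \;=\; p\,r^{p-1}\,dr(\tilde U)\,D \;+\; r^{p-1}\,\tilde\nabla_{\tilde U_F} D \;+\; r^p\,\tilde\nabla_{\tilde U_{F'}} D.
\]
For $p=1$ this extends smoothly to $r=0$; for $p=2$ every summand carries a positive power of $r$, so the whole expression vanishes at the boundary. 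Combining this with the smoothness of $\varphi X = r X_F + X_{F'}$ shows that the entire right-hand side of the $\varphi$-conjugated Christoffel formula is smooth in $(X,\tilde Y,\tilde Z)$, and vanishes at $r=0$ in the second-order case.

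Inverting $\hat C$ near $\partial M$ via the Neumann series $\hat C^{-1} = I - r^p D + r^{2p} D^2 - \cdots$ (smooth and equal to $I$ at $r=0$), we conclude that $\hat\omega(X)\tilde Y$ is a smooth tensor field on $U$, proving (i). For $p=2$, both factors on the right of $\hat\omega(X)\tilde Y = \hat C^{-1}\cdot[\text{smooth}]$ restrict at $r=0$ to $I$ and $0$ respectively, so $\hat\omega|_{r=0} = 0$, yielding the equality $\varphi\nabla^{\tilde g}\varphi^{-1}|_{r=0} = \varphi\nabla^g\varphi^{-1}|_{r=0}$ of (ii).
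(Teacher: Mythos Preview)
Your proof is correct and follows essentially the same route as the paper: both conjugate the Christoffel identity of Lemma~\ref{Crf} by $\varphi$ (via Lemma~\ref{il2}) and then control the only potentially singular terms $\tilde\nabla_{\varphi^{-1}\tilde U}\hat C$ by exploiting $dr|_F=0$, so that the $r^{-1}$ in $\varphi^{-1}$ is absorbed by the prefactor $r^p$ of $\hat C-I$. Your write-up is slightly more explicit (the splitting $\tilde U=\tilde U_F+\tilde U_{F'}$ and the Neumann series for $\hat C^{-1}$), but the argument is the same.
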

\begin{proof}  Let $C^{\varphi}:=\varphi C \varphi^{-1}$, $Y':=\varphi(Y)$,
$Z':=\varphi(Z)$, $\nabla^{\varphi}:=\varphi\nabla^h\varphi^{-1}$,
 $h^{\varphi}(\cdot,\cdot):=h(\varphi^{-1}(\cdot),\varphi^{-1}(\cdot))$, 
and $\omega(X)^{\varphi}:=\varphi \omega(X)\varphi^{-1}$.
The Christoffel formula (Lemma \ref{Crf}) can be written using Lemma
\ref{il2}:
\begin{equation}
\label{eq100}\begin{split}
\lefteqn{2h^{\varphi}\left(C^{\varphi}\omega(X)^{\varphi}(Y'),Z'\right)=
h^{\varphi}\left((\nabla^{\varphi}_X C^{\varphi})(Y'),Z'\right) +}&\\
&+ h^{\varphi}\left(
(\nabla_{\varphi^{-1}(Y')}^{\varphi}C^{\varphi})(\varphi(X))),Z'\right)  -
h^{\varphi}\left((\nabla_{\varphi^{-1}(Z')}^{\varphi}C^{\varphi})(\varphi(X))),
Y'\right).
\end{split}\end{equation} 
We deduce from this formula that, in order to show that $\varphi
\omega(X)\varphi^{-1}$ extends for perturbations of first order, it is enough to
show that
 \[\nabla_{\varphi^{-1}(Y')}^{\varphi}C^{\varphi}=\nabla^{\varphi}_{\varphi^{-1}
(Y')}(rD)\] extends for all choices of $Y'$, since the first term in the sum
(r.h.s.\ of \eqref{eq100})) extends anyway.
 
 The only situation when the extension is not \emph{a priori} clear is when $Y'\in
\Gamma(\pi_2^*\VN )$. Then $\varphi^{-1}(Y')=\frac{Y'}{r}$. But we can use now
that $Y'(r)=0$ and therefore
 \[\nabla^{\varphi}_{\varphi^{-1}(Y')}(rD)=\nabla^{\varphi}_{Y'}(D),
 \]
 and the later term extends.

Since $C^{\varphi}\ra 0$ when $r\ra 0$, in order to show that $\varphi
\omega(X)\varphi^{-1}$ extends by $0$ for perturbations of second order we need
to check that 
\[ \lim_{r\ra 0}\nabla^{\varphi}_X C^{\varphi}= \lim_{r\ra 0}\nabla^{\varphi}_X(
r^2D)=0
\]
\[\lim_{r\ra 0}\nabla_{\varphi^{-1}(Y')}^{\varphi}C^{\varphi} =\lim_{r\ra
0}\nabla_{\varphi^{-1}(Y')}^{\varphi}(r^2D)=0
\]
for all choices of $X$ and $Y'$. If either $X=Y'=\partial_r$ then since
$\varphi^{-1}(\partial_r)=\partial_r$ the two  limits are identical and clearly
equal to $0$. When $Y'\in \Gamma(\pi_2^*\VN )$ then the same idea as in the
first
order perturbations apply.
\end{proof}

\subsection{The Riemannian metric in a neighborhood of a submanifold} 

The purpose of this Section is to prove that the degenerate metric on the
oriented blow-up space of a  submanifold inside a Riemannian manifold is a
first-order perturbation of a canonical model edge degenerate metric.

Let $B\subset M$ be a compact submanifold in a Riemannian manifold $(M,g)$. Let 
$\nuB \subset TX_{|B}$ be the normal bundle, $\pi:S(\nuB )\to B$ the unit sphere
bundle 
inside $\nuB $, and 
\begin{align*}
\exp:S(\nuB )\times [0,\infty)\to M,&&(v_x,r)\mapsto \exp_x(rv_x)
\end{align*}
the geodesic exponential map in normal directions to $B$. 
This map defines a diffeomorphism from $S(\nuB )\times (0,\epsilon)$ 
to the complement of $B$ inside its $\epsilon$-neighborhood. 
The function $r$ becomes the distance function to $B$. In fact, replacing the
$\epsilon$-neighborhood of $B$
with $S(\nuB )\times [0,\epsilon)$ amounts precisely to constructing the (real)
blow-up of $M$ along $B$.

The normal bundle $\nuB $ inherits itself a metric which makes the canonical
projection $\pi: \nuB \ra B$ a Riemannian submersion. The Ehresmann connection
here is just the normal connection on $B$ induced from the Levi-Civita 
connection of $M$. One can use the blow-down map:
\[ \exp: [0,\epsilon)\times S(\nuB ) \ra M
\]
which is a diffeomorphism for $r\neq 0$ in order to endow $[0,\epsilon)\times
S(\nuB )$ with a degenerate metric $g_1$. Clearly there exist a model edge
degenerate metric $h_1$ on $[0,\epsilon)\times S(\nuB )$ of type:
\[dr^2\oplus r^2 g^V\oplus \pi^*g^B
\] 
where $g^V$, the metric on $VS(\nuB )\subset \pi^*\nuB $ is induced by pulling
back the metric $g\bigr|_{\nuB }$. The decomposition is relative to the
Ehresmann connection mentioned earlier.

 We will consider the first-order perturbation
\begin{equation}\label{subvpertu} \hat{h}_1(r)=h_1-2r\mathrm{II}
\end{equation}
where $\mathrm{II}:S(\nu B)\ra \Bil(\pi^*T^*B)$ is the second fundamental form 
\[\mathrm{II}_{W_0}(X_0,Y_0):=g(\nabla_{X_0}Y,W_0), \qquad W_0\in S(\nu_bB),\;\;
X_0,Y_0\in T_bB\]
with $Y$ vector field along $B$ such that $Y(0)=Y_0$. 

\begin{theorem}\label{tvsv}
Let $B\subset M$ be a compact submanifold in a Riemannian manifold $(M,g)$. 
Then the degenerate metric $g_1$ on $[0,\epsilon)\times S(\nuB )$ is a second
order perturbation of the metric $\hat{h}_1$ defined in \eqref{subvpertu}. 
\end{theorem}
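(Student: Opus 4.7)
The approach is to pull back $g$ via the normal exponential map $\Phi(r,v):=\exp_{\pi(v)}(rv)$, Taylor-expand in $r$ using Jacobi fields, and match term by term against $\hat h_1$. The differential $d\Phi$ decomposes in the splitting $T([0,\epsilon)\times S(\nu B))=\bR\partial_r\oplus V\!S(\nu B)\oplus \pi^*TB$ as follows. It sends $\partial_r$ to the unit vector $\dot\gamma_v(r)$ along the normal geodesic; it sends a vertical vector $W\in T_vS(\nu_bB)$ to the Jacobi field $J_W$ along $\gamma_v$ with $J_W(0)=0$ and $J_W'(0)=W$; and it sends the horizontal lift $\tilde X_0$ of $X_0\in T_bB$ (via $\nabla^\perp$) to the Jacobi field $J_{X_0}$ with $J_{X_0}(0)=X_0$ and $J_{X_0}'(0)=-A_vX_0$, where $A_v$ is the Weingarten operator satisfying $g(A_vX,Y)=\mathrm{II}_v(X,Y)$. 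This last identification follows from the symmetry lemma $D_r\partial_t=D_t\partial_r$, the $\nabla^\perp$-parallelism of the extension of $v$, and the identity $g(\nabla^M_{X_0}v,Y)=-\mathrm{II}_v(X_0,Y)$ for $Y\in TB$.

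Using the Jacobi equation $J''(r)=-R(J(r),\dot\gamma(r))\dot\gamma(r)$, I would Taylor-expand $\Phi^*g$ block by block at $r=0$. The radial block is settled by the Gauss lemma: $g(\dot\gamma,\dot\gamma)\equiv 1$ and $g(\dot\gamma,J)\equiv 0$ whenever $J(0)\perp v$, matching $h_1$ exactly. Polarization yields on the horizontal block
\[
g(J_{X_0},J_{Y_0})=g(X_0,Y_0)-2r\,\mathrm{II}_v(X_0,Y_0)+O(r^2),
\]
where the linear term uses the symmetry $g(A_vX,Y)=\mathrm{II}_v(X,Y)=\mathrm{II}_v(Y,X)$. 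On the vertical block, $J_W''(0)=-R(0,\dot\gamma)\dot\gamma=0$ gives $J_W(r)=rW+O(r^3)$, hence $g(J_V,J_W)=r^2g(V,W)+O(r^4)$. For the horizontal-vertical block, the $r^1$ coefficient is $g(X_0,W)=0$ (tangent vs.\ normal), and the $r^2$ coefficient is $g(-A_vX_0,W)=0$ since $A_vX_0\in T_bB$ while $W\in\nu_bB$; so this block is $O(r^3)$.

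These expansions show that $\hat h_1=h_1-2r\,\mathrm{II}$ is designed precisely to absorb the linear horizontal-horizontal defect, so that $g_1-\hat h_1$ is $O(r^2)$ on horizontal-horizontal, $O(r^3)$ on horizontal-vertical, $O(r^4)$ on vertical-vertical, and identically zero on the radial block. Since $\varphi^{-1}$ rescales vertical slots by $r^{-1}$, the pullback $(g_1-\hat h_1)^\varphi$ loses a factor of $r$ per vertical argument, improving every block to $O(r^2)$: the vertical-vertical block loses two powers of $r$, the horizontal-vertical block loses one, and the remaining blocks are unchanged. Therefore $g_1^\varphi=\hat h_1^\varphi+r^2\alpha$ for a smooth symmetric bilinear form $\alpha$ on $TM|_U$, which is exactly the second-order perturbation condition of Definition~\ref{def10}. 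The main subtlety is the horizontal-vertical block: the vanishing of its $r^2$ coefficient, without which the $O(r^2)$ bound would fail after the $\varphi^{-1}$-rescaling, rests on the basic fact that $A_vX_0$ lies in $TB$ and is thus $g$-orthogonal to normal directions.
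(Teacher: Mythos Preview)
Your proof is correct and follows essentially the same approach as the paper: both pull back the metric via the normal exponential map, express the image of tangent vectors as Jacobi fields with explicit initial data, Taylor-expand $g(J_1,J_2)$ block by block in the horizontal/vertical splitting to obtain the same orders $O(r^2)$, $O(r^3)$, $O(r^4)$, and then apply the $\varphi^{-1}$-rescaling to conclude. Your presentation differs only cosmetically---you phrase the horizontal initial velocity via the Weingarten operator $J_{X_0}'(0)=-A_vX_0$ rather than writing $\nabla_{\gamma'(0)}W$ directly, and you expand $J_W(r)=rW+O(r^3)$ first rather than computing $f'''(0)$---but the content is identical.
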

\begin{proof} By the Gauss Lemma, $R:=\partial_r$ is a geodesic
field orthogonal to the slices $\{r\}\times S(\nuB )$, and therefore
$g_1=dr^2\oplus g_1(r)$. We need only look at $g_1(r)$ on $T(S\nuB )$. The
metric $g_1(r)$ is obtained  via the map:
\begin{align*} 
\exp^{r}:S \nuB \ra M,&&  (p,v)\ra \exp_p(rv), &&
g_1(r)(\cdot,\cdot):=g(d\exp^r(\cdot),d\exp^r(\cdot)).
\end{align*}
We use  curves  $W:(-\epsilon,\epsilon)\ra S(\nuB )$ with $\gamma(s):=\pi(W(s))$
where $\pi:S(\nuB )\ra B$ is the projection in order to represent tangent
vectors of $S(\nuB )$. Let then
\[ f(r):= g_1(r)(W'_1(0),W_2'(0))=
g\left({\partial_s}\exp^r(W_1(s))_{|s=0},{\partial_s}\exp^r(W_2(s))_{|s=0}
\right).\]
Notice that
\[ J_i(r):={\partial_s}\exp^r(W_i(s))_{|s=0}
\]
are Jacobi vector fields, along the geodesics $r\ra\exp_{\gamma_i(0)}(rW_i(0))$.
We will assume that $W_1(0)=W_2(0)=(b,W_0)\in S(\nu_b B)$. 

It is good to keep in mind that 
\begin{itemize}
\item[(1)] there exists a splitting \[ TS(\nu B)=\pi^* TB\oplus VS(\nu B )  \]
induced by the normal connection $\nabla^{\nu}$ of $\nu B$; consequently, the
derivative $W'(s)$ decomposes as:
\[ W'(s)=(\gamma'(s),(\gamma^*\nabla^{\nu})_{\partial_t}W\bigr|_{t=s})\in
T_{\gamma(s)}B\oplus \nu_{\gamma(s)}B
\]
where $\gamma^*\nabla^{\nu}$ is the pull-back connection on $\gamma^*\nu B\ra
(-\epsilon,\epsilon)$.
\item[(2)] via the same splitting we have an injective morphism of vector
bundles over $S(\nu B)$:
\[ TS(\nu B )\hookrightarrow \pi^* TB\oplus V\nu B=\pi^*TB\oplus \pi^*\nu
B=\pi^*\left(TM\bigr|_{B}\right)
\]
 Hence for $W_0\in S(\nu_b B)$, we have $T_{W_0}S(\nu B)=\{w\in T_bM~|
~g(w,W_0)=0\}$.
\end{itemize}

In order to make the computations more transparent it is useful to  separate
two classes of  vector fields $W$ along $\gamma$.  
\begin{itemize}
\item[(a)] the vertical ones, i.e., those for which $\gamma(s)\equiv b\in B$ is
constant and therefore $J(0)=0$ and $J'(0)=W'(0)\in T_{W_0}S(\nu_{b}B)$ is a
vertical vector in $T_{W_0}S\nuB $.
\item[(b)] the horizontal ones, i.e., those for which $\gamma'(0)\neq 0$ and
$\nabla_{\gamma'}^{\nu}W=(\gamma^*\nabla^{\nu})W\equiv 0$; these are obtained by
parallel transporting the initial vector $W_0$ along $\gamma$ in $\nu B$; notice
that the condition
$\nabla_{\gamma'}^{\nu}W=0$ implies that $W'(0)=(\gamma'(0),0)$ is a horizontal
vector in
$T_{W(0)}S\nuB $ such that $d\pi(W'(0))=\gamma'(0)$; one has $J(0)=\gamma'(0)$ 
and $J'(0):=\nabla_{\partial_r}
J_{|r=0}=\nabla_{\gamma'(0)}W:=(\gamma^*\nabla)_{\partial_t}W\bigr|_{t=0}$ where
$\nabla$ is the Levi-Civita connection on $M$. Since the
$0=\nabla^{\nu}_{\gamma'(0)}W=P^{\nu B}\nabla_{\gamma'(0)}W$ it follows that
$J'(0)$ is a horizontal vector.
\end{itemize} 

By what was just said one has:
\begin{itemize}
\item[(a)]  when $W_1'(0)$, $W_2'(0)$ are both horizontal:
\begin{align} 
f(0)={}&g(J_1(0),J_2(0))=g(\gamma_1'(0),\gamma_2'(0))=g(W_1'(0),W_2'(0)),
\nonumber\\
f'(0)={}&\partial_r g(J_1(r),J_2(r))_{|r=0}=
g(\nabla_{\gamma_1'(0)}W_1,\gamma_2'(0))+g(\gamma_1'(0),\nabla_{\gamma_2'(0)}
W_2)=-2\mathrm{II}_{W_0}(\gamma_1'(0),\gamma_2'(0)),\nonumber\\
f''(0)={}&\partial^2_r
g(J_1(r),J_2(r))_{|r=0}=\left[g(J''_1(r),J_2(r))+2g(J_1'(r),
J_2'(r))+g(J_1(r),J_2''(r))\right]_{|r=0}\nonumber\\
={}&[g\left(R^g(\partial_r,J_1(r))\partial_r,J_2(r)\right)+2g(J_1'(r),
J_2'(r))+g\left(J_1(r),
R^g(\partial_r,J_2(r))\partial_r\right)]_{|r=0} \label{jaceq2}
\end{align}
where we used that $J_1$ and $J_2$ are Jacobi.
\item[(b)]  when $W_1'(0)$ is horizontal and $W_2'(0)$  vertical:
\begin{align*} 
f(0)={}&g(J_1(0),J_2(0))=0,\\
f'(0)={}&g(J'_1(0),J_2(0))+g(J_1(0),J_2'(0))=g(\gamma'_1(0),W_2'(0))=0,\\
f''(0)={}&2g(J_1'(0),
J_2'(0))=0{}&.
\end{align*}
In the last equality we used that in \eqref{jaceq2}, $J_2(0)=0$ and that $J'(0)$
is horizontal while $J_2'(0)$ is vertical.
\item[(c)] when $W_1'(0)$ and $W_2'(0)$ are both vertical:
\begin{align*} 
f(0)={}&0=f'(0),\\
f''(0)={}&2g(J_1'(0),J_2'(0))=2g(W_1'(0),W_2'(0)),\\
f'''(0)={}&\partial_r[g\left(R^g(\partial_r,J_1(r))\partial_r,
J_2(r)\right)+2g(J_1'(r),J_2'(r))+g\left(J_1(r),R^g(\partial_r,
J_2(r))\partial_r\right)]\bigr|_{r=0}=0
\end{align*}
\end{itemize}
 because $J_1(0)=J_2(0)=0$. Summarizing: 
\begin{itemize}
\item for $W_1,W_2\in T_{W_0}S(\nu B)$ both horizontal, 
\begin{align*}g_1(r)(W_1,W_2)={}&g(W_1,W_2)-2r\mathrm{II}_{W_0}(W_1,W_2)+O(r^2);
\end{align*}
\item for $W_1$ horizontal and $W_2$ vertical, $g_1(r)(W_1,W_2)=O(r^3)$;
\item for $W_1$ and $W_2$ both vertical, $g_1(r)(W_1,W_2)=r^2g(W_1,W_2)+O(r^4)$.
\end{itemize}
Recall now that
$g_1^{\varphi}(r)(W_1,W_2)=g_1(r)(P^H(W_1)+r^{-1}P^V(W_1),P^H(W_2)+r^{-1}
P^V(W_2))$. We get that
\[ g_1(r)^{\varphi}_{W_0}(W_1,W_2)=g(W_1,W_2)-2r
\mathrm{II}_{W_0}(P^H(W_1),P^H(W_2))+O(r^2)=\hat{h}_1(r)^{\varphi}(W_1,
W_2)+O(r^2)
\]
and this corresponds to a second-order perturbation of $\hat{h}_1$ according to
Definition \ref{def10}.
\end{proof}

\subsection{Gauss-Bonnet for perturbations of model metrics}\label{GBpmm}
 
 We will look at perturbation of second order (Definition \ref{def10}) of
canonical model edge degenerate metrics. We assume again that $M$ is an edge
manifold.
 
 A canonical model edge degenerate metric $h$ is uniquely determined by the
following data
 \begin{itemize}
 \item[(a)] a collar neighborhood $U\supset \partialM $ with a diffeomorphism
$R:U\ra (-\epsilon,0]\times N$ that makes the obvious diagram commutative;
 \item[(b)] an Ehresmann connection on $\pi:\partialM =N\ra B$;
 \item[(c)] a metric $g^{V}$ on $\Ker d\pi$;
 \item[(d)] a metric $g^B$ on $B$.
 \end{itemize}

Part (a) of the next result justifies (\ref{eqintro3}).
 \begin{theorem} \label{trgl} \begin{itemize}
\item[(a)] Let $g$ be a first-order perturbation of a canonical model edge
metric $h$. Then $\partial_r\in \Gamma(TM\bigr|_{\partial M})$ is the exterior
normal unit of $\partial M$ with respect to $g^{\varphi}$, the Pfaffian $\Pf^g$
is a smooth form on $M$ and 
\begin{equation}\label{eq88} \lim_{r \ra 0}\int_{\{r\}\times
N}\TPf^g=\TPf(\varphi\nabla^g\varphi^{-1}\bigr|_{r=0},\partial_r).
\end{equation}
 Hence the following holds:
\[ (2\pi)^k\chi(M)=\int_{M}\Pf^{g}-\int_{B}\left(\int_{\partial
M/B}\TPf(\varphi\nabla^g\varphi^{-1}\bigr|_{r=0},\partial_r)\right).
\]

\item[(b)] Suppose that $g$ is a second-order perturbation of a canonical model
edge degenerate metric $h$. Then 
 \begin{equation} \label{eq89}\lim_{r \ra 0}\int_{\{r\}\times
N}\TPf^g=\lim_{r\ra 0}\int_{\{r\}\times N}\TPf^h.
 \end{equation}
 Consequently, the Gauss-Bonnet formula of Theorem \ref{Theorem1} holds verbatim
where the
 odd Pfaffian form is associated to the degenerate metric $h$. 
 \end{itemize}
 \end{theorem}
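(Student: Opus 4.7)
My plan is to use the key structural result, Theorem \ref{Te1}, to transfer everything from the degenerate bundle $(TM,g)$ to the smooth Euclidean bundle $(TM|_U, g^{\varphi})$ via the isomorphism $\varphi$, which is a bundle \emph{isometry} wherever it is defined (i.e., for $r\neq 0$) between $(TM,g)$ and $(TM,g^{\varphi})$. The Pfaffian and its transgression are invariants of a Euclidean bundle with metric connection under isometric conjugation, so on $U^c$ one has the pointwise equality of differential forms
\[
\Pf(\nabla^g)=\Pf(\varphi\nabla^g\varphi^{-1}),\qquad
\TPf^g=\TPf(\varphi\nabla^g\varphi^{-1},\partial_r),
\]
the right-hand sides being computed with respect to $g^{\varphi}$. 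Here I have used that $\varphi(\partial_r)=\partial_r$, so the unit normal section for $g$ on a slice $\{r\}\times N$ transports to the unit normal for $g^{\varphi}$; moreover, because the splitting $TM|_{\{r\}\times N}=\bR\partial_r\oplus T(\{r\}\times N)$ is preserved by $\varphi$, the cylindrical connection transforms correctly as well.

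For part (a), the three items follow almost mechanically. First, since $g^{\varphi}=h^{\varphi}+r\alpha$ with $h^{\varphi}|_{r=0}=dr^2\oplus g^V\oplus\pi^*g^B$, the restriction of $g^{\varphi}$ to $r=0$ makes $\partial_r$ the unit exterior normal to $\partial M$. Second, by Theorem \ref{Te1}(i) the conjugated connection $\varphi\nabla^g\varphi^{-1}$ extends smoothly to $r=0$ as a metric connection on the smooth Euclidean bundle $(TM|_U,g^{\varphi})$; hence its Pfaffian is a smooth form on $U$, agreeing with $\Pf^g$ on $U^c$, so $\Pf^g$ extends smoothly across $\partial M$. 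Third, the transgression form $\TPf(\varphi\nabla^g\varphi^{-1},\partial_r)$ on each slice depends smoothly on $r$ down to $r=0$; since $N$ is compact and integration of a smooth family of $(2k-1)$-forms commutes with taking the limit $r\to 0$, we obtain \eqref{eq88}. Finally, applying the classical Chern-Gauss-Bonnet formula \eqref{eq-4} to the compact manifold with boundary $M_r=M\setminus\varphi^{-1}((r,0)\times N)$ (which is homotopy equivalent to $M$, so $\chi(M_r)=\chi(M)$) and passing to the limit $r\to 0$ yields the stated Gauss-Bonnet formula, after integrating the boundary term along the fibers of $\pi$.

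Part (b) is then a two-line consequence. For second-order perturbations Theorem \ref{Te1}(ii) gives
\[
\varphi\nabla^g\varphi^{-1}\bigr|_{r=0}=\varphi\nabla^h\varphi^{-1}\bigr|_{r=0},
\]
so the boundary data entering \eqref{eq88} for $g$ and for $h$ coincide, proving \eqref{eq89}. Hence the transgression correction is the same as for the model metric $h$, whose value was already computed in the proof of Theorem \ref{Theorem1} in terms of $\Pf(g^B)$ and the odd Pfaffian of $g^V$. Combined with the smoothness of $\Pf^g$ established in part (a) (which guarantees convergence of $\int_{M_r}\Pf^g$ as $r\to 0$), this gives Theorem \ref{Theorem1} verbatim for $g$.

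The main obstacle is really the bookkeeping in the pointwise identity $\TPf^g=\TPf(\varphi\nabla^g\varphi^{-1},\partial_r)$: one must verify that every ingredient of the Chern transgression --- Berezin integral, curvature, projection onto $\langle\partial_r\rangle^{\perp}$, and the induced Levi-Civita of the slice --- is transported consistently by $\varphi$ between the $g$-picture and the $g^{\varphi}$-picture. Once this naturality under isometric conjugation is nailed down, the convergence as $r\to 0$ and the final Gauss-Bonnet identities are straightforward consequences of Theorem \ref{Te1} and the smooth case \eqref{eq-4}.
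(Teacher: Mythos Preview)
Your approach is essentially the paper's own: transport everything via the isometry $\varphi$ to the smooth Euclidean bundle $(TM|_U,g^{\varphi})$, invoke Theorem~\ref{Te1}, and let $r\to 0$. There is, however, one slip in the bookkeeping you yourself flag as delicate. For a first-order perturbation $g$, the vector $\partial_r$ is \emph{not} in general the $g$-unit normal (nor the $g^{\varphi}$-unit normal) to the slice $\{r\}\times N$ when $r\neq 0$: from $g(X,Y)=h(X,Y)+r\,\alpha(\varphi X,\varphi Y)$ one has $g(\partial_r,Y)=r\,\alpha(\partial_r,\varphi Y)$ for $Y$ tangent to the slice, which need not vanish. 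Hence your pointwise identity $\TPf^g=\TPf(\varphi\nabla^g\varphi^{-1},\partial_r)$ fails for $r\neq 0$, and the claim ``the unit normal section for $g$ \ldots\ transports to $\partial_r$'' is only true in the limit.

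The repair is exactly what the paper does. Let $s_g(r)$ be the actual $g$-unit exterior normal to $\{r\}\times N$; since $\varphi$ is an isometry preserving $T(\{r\}\times N)$, the section $\varphi(s_g(r))$ is the $g^{\varphi}$-unit normal, and the correct pointwise identity is
\[
\TPf^g\big|_{\{r\}\times N}=\TPf\bigl(\varphi\nabla^g\varphi^{-1}\big|_{\{r\}\times N},\ \varphi(s_g(r))\bigr).
\]
Because $g^{\varphi}$ is a smooth metric on $U$ and the slices form a smooth foliation, $\varphi(s_g(r))$ is smooth down to $r=0$, where $g^{\varphi}|_{r=0}=h^{\varphi}|_{r=0}$ forces $\varphi(s_g(0))=\partial_r$. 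The right-hand side is therefore smooth in $r$, and letting $r\to 0$ yields \eqref{eq88}. With this correction the rest of your argument for (a), and all of (b), goes through unchanged and matches the paper's proof.
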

 \begin{proof} We use the notations of Section \ref{Sec8}.  One consequence of
the definition of perturbation is that the bilinear form
 \[ g^{\varphi}(\cdot,\cdot)=g(\varphi^{-1}(\cdot),\varphi^{-1}(\cdot))
 \]
 is a well-defined smooth metric on $TM$. Due to the fact that
$g^{\varphi}\bigr|_{TM\bigr|_{\partial
M}}=h^{\varphi}\bigr|_{TM\bigr|_{\partialM }}$, the vector $\partial_r$ has norm
$1$ also for $g^{\varphi}$ at $r=0$.
 
  Moreover if $\nabla^g$ is the
Levi-Civita connection of $g$ away from $r=0$, then
$\varphi\nabla^g\varphi^{-1}$ is a $g^{\varphi}$-metric connection. As proved in
Theorem \ref{Te1} this connection is defined everywhere and therefore $\Pf^g$ is
smooth on $M$.
 
It is easy to check that if $\nabla^1$ and $\nabla^2$ are two $g$-metric
compatible connections and  $\varphi:E\ra E$ is a bundle isometry where on the
right one uses $g^{\varphi}$ then
\[ \TPf(\nabla^1,\nabla^2)=
\TPf(\varphi\nabla^1\varphi^{-1},\varphi\nabla^2\varphi^{-1}).\]
This is the case for $E=TM\bigr|_{\{r\}\times N}$ with $r\neq 0$ and
$\nabla^1=d\oplus P\nabla^g P$ and $\nabla^2=\nabla^g$ constructed as in Example
\ref{ex1}, where $P$ is the $g$-orthogonal projection onto $T(\{r\}\times
N)\subset E$.  The fact that $\varphi\nabla^g\varphi^{-1}$ exists for all values
of $r$ implies immediately that the limit in \eqref{eq88}
exists. 

Moreover the limit is entirely determined by
$\varphi\nabla^g\varphi^{-1}\bigr|_{TM\bigr|_{\partialM }}$ and the orthogonal
decomposition
\[ TM\bigr|_{\partialM }=\bR\partial_r\oplus TN.
\] 
In fact, if $s_g(r)$ is the $g$-exterior unit normal to $\{r\}\times N$ then
$\varphi(s_g(r))$ is the $g^{\varphi}$ unit normal and it is easy to check that
$\varphi(s_g(r))$ is a parallel section with respect to
$\varphi\nabla^1\varphi^{-1}$ and therefore $\varphi\nabla^1\varphi^{-1}$, being
$g^{\varphi}$-compatible has a diagonal block-decomposition with respect to
$TM\bigr|_{\{r\}\times N}\simeq \bR\varphi(s_g(r))\oplus T(\{r\}\times N)$ of
type:
\[\varphi\nabla^1\varphi^{-1}= d\oplus
P^{\varphi}\varphi\nabla^g\varphi^{-1}P^{\varphi}
\]
 where $P^{\varphi}$ is the $g^{\varphi}$-orthogonal projection onto
$T(\{r\}\times N)$ and $\varphi$ takes the $g$ unit normal to $\{r\}\times N$ to
the $g^{\varphi}$ unit normal to $\{r\}\times N$.
 
Therefore
 \[ \TPf^g\bigr|_{\{r\}\times
N}=\TPf\left(\varphi\nabla^g\varphi^{-1}\bigr|_{\{r\}\times N},\varphi
(s_g(r))\right)
 \]
and one gets (\ref{eq88}) when $r\ra 0$.
 
For $(b)$ recall that for second-order perturbations
$\varphi\nabla^g\varphi^{-1}\bigr|_{r=0}=\varphi\nabla^{h}\varphi^{-1}\bigr|_{
r=0}$.
\end{proof}

\subsection{First order perturbations}

One can obtain the following alternative form of Theorem \ref{trgl} part (a),
stated as Theorem \ref{Theorem5} in the Introduction:
 \begin{theorem}\label{Tcorfac} Let $g$ be a first-order perturbation of a model
edge metric $h=dr^2\oplus r^2 h^{V}\oplus \pi^*h^B$. Then 
 \[
(2\pi)^k\chi(M)=\int_{M}\Pf^g-\sum_{j=0}^{(f-1)/2}{\tilde{c}\left(\tfrac{f-1}{2}
-j\right)}\int_B\left(
{\Pf(h^B)}\int_{N/B}P_{j,f}\left(h^{V}\right)\right)-\int_{\partialM }
\TPf(\nabla^h_1,\nabla^g_1)
\]
where $\nabla^h_1=\varphi\nabla^h\varphi^{-1}\bigr|_{r=0}$ is described in
\eqref{extat0} and $\nabla^g_1=\varphi\nabla^g\varphi^{-1}\bigr|_{r=0}$. The 
form $\Pf(h^B)$ is zero, by definition, when $\dim{B}$ is odd.
 \end{theorem}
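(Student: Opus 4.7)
The strategy is to apply the first-order version of the Chern-Gauss-Bonnet formula, namely Theorem \ref{trgl}(a), to the perturbed metric $g$ and then rewrite the resulting boundary transgression by interpolating between the two connections $\nabla^g_1$ and $\nabla^h_1$ that live on $TM\bigr|_{\partialM}$. As a first step I would simply invoke Theorem \ref{trgl}(a), which gives
\begin{equation*}
(2\pi)^k\chi(M) = \int_M\Pf^g - \int_{\partialM}\TPf(\nabla^g_1,\partial_r).
\end{equation*}
Since $g^\varphi\bigr|_{r=0} = h^\varphi\bigr|_{r=0}$, the section $\partial_r$ has unit length for this common restricted metric, and both $\nabla^g_1$ and $\nabla^h_1$ are compatible with it, so the two transgressions live in the same framework.

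The second step is to use Proposition \ref{p5} to compare $\TPf(\nabla^g_1,\partial_r)$ with $\TPf(\nabla^h_1,\partial_r)$. Applying it on the closed manifold $\partialM$ with $\nabla^0 := \nabla^h_1$, $\nabla^1 := \nabla^g_1$, and $s := \partial_r$, the exact correction $d\gamma$ integrates to zero by Stokes, and combining with the reversal identity of Remark \ref{rem1} gives
\begin{equation*}
\int_{\partialM}\TPf(\nabla^g_1,\partial_r) = \int_{\partialM}\TPf(\nabla^h_1,\partial_r) + \int_{\partialM}\TPf(\nabla^h_1,\nabla^g_1).
\end{equation*}

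The third step is to identify the first integral on the right with the known boundary contribution for the unperturbed model. Viewing $h$ as a trivial first-order perturbation of itself, equation (\ref{eq88}) from Theorem \ref{trgl}(a) yields $\int_{\partialM}\TPf(\nabla^h_1,\partial_r) = \lim_{r\to 0}\int_{\{r\}\times N}\TPf^h$, and this limit was already computed in the proof of Theorem \ref{Theorem1}(b) to equal $\int_B \Pf(h^B)\int_{N/B}\oPf(h^V)$ (in the even-$\dim B$ case; it vanishes when $\dim B$ is odd, consistent with the convention $\Pf(h^B)=0$). Finally I would repackage the odd Pfaffian: directly from its definition, for a fiber of odd dimension $f$,
\begin{equation*}
\oPf(h^V) = \sum_{j=0}^{(f-1)/2}\tilde{c}\!\left(\tfrac{f-1}{2}-j\right) P_{j,f}(h^V),
\end{equation*}
which is the same identification already used implicitly in Corollary \ref{cor1} and Remark \ref{nrem}. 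Substituting the three displays into the initial formula produces exactly the stated identity.

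The main obstacle is purely bookkeeping: one must track the signs carefully through Proposition \ref{p5}, the reversal in Remark \ref{rem1}, and the orientation of the outward normal $\partial_r$, so that everything lines up with the sign conventions of $\tilde{c}$ and $\oPf$. No further curvature computation is required beyond repackaging the odd Pfaffian, since the geometric content has already been absorbed into Theorem \ref{Theorem1} and Theorem \ref{trgl}(a).
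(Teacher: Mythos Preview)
Your argument is correct and in fact slightly cleaner than the paper's own proof. Both routes reduce to the same key identity on $\partial M$,
\[
\int_{\partial M}\TPf(\nabla^g_1,\partial_r)=\int_{\partial M}\TPf(\nabla^h_1,\partial_r)+\int_{\partial M}\TPf(\nabla^h_1,\nabla^g_1),
\]
followed by the identification of $\int_{\partial M}\TPf(\nabla^h_1,\partial_r)$ with the odd-Pfaffian term via Theorem~\ref{Theorem1}. The difference is in how this identity is obtained. You work directly at $r=0$, where $g^\varphi$ and $h^\varphi$ coincide, so $\nabla^g_1$ and $\nabla^h_1$ are metric connections for the \emph{same} Euclidean structure on $TM|_{\partial M}$ with the common unit section $\partial_r$; Proposition~\ref{p5} then applies verbatim. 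The paper instead compares the transgressions on the slices $\{r\}\times N$ for $r\neq 0$, where $g^\varphi$ and $h^\varphi$ differ; this forces the introduction of the parallel-transport isometry $\tau_1$ (as in Section~\ref{sbs2}) to put both connections in the same Euclidean bundle, and then the unit normals $\tau_1^{-1}(\varphi s_g(r))$ and $\varphi s_h(r)$ are only homotopic rather than equal, so Proposition~\ref{p8} is needed instead of Proposition~\ref{p5}. Your shortcut avoids this machinery at no cost, since Theorem~\ref{trgl}(a) has already packaged the limit $r\to 0$ into the transgression at $r=0$.
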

 \begin{proof} We use the notation of the proof of Theorem \ref{trgl}. As in
Subsection \ref{sbs2}, we consider the metric
$ds^2+(1-s)h^{\varphi}+sg^{\varphi}$ on
$[0,1]\times U$, where $U$ is the collar.
Parallel transport induces a bundle isometry  
\[ \tau_1^{-1}:(TM\bigr|_{U}, g^{\varphi})\ra (TM\bigr|_{U}, h^{\varphi}).\]
While parallel transport $\tau_1^{-1}$ need not take $\varphi(s_g(r))$ to
$\varphi(s_h(r))$  since
at $r=0$ $(\varphi s_g(r))\bigr|_{r=0}=(\varphi s_h(r))\bigr|_{r=0}=\partial_r$
and $\tau_1\bigr|_{r=0}=\id$ it is
clear that for $r$ small one can find a smooth homotopy between
$\tau_1^{-1}\circ \varphi(s_g(r))$  and $\varphi(s_h(r))$ within
$(S(TM),h^{\varphi})$. Then we
can apply Proposition \ref{p8} to conclude that
\begin{align*} 
\lefteqn{\TPf(\varphi\nabla^g\varphi^{-1},
\varphi(s_g(r)))-\TPf(\varphi\nabla^h\varphi^{-1},\varphi(s_h(r)))}\\
{}&=\TPf(\tau^{-1}_1\varphi\nabla^g\varphi^{-1}\tau_1,\tau_1^{-1}
(\varphi(s_g(r))))-\TPf(\varphi\nabla^h\varphi^{-1},\varphi(s_h(r)))\\ 
{}&=-\TPf(\tau^{-1}_1\varphi\nabla^g\varphi^{-1}\tau_1,\varphi\nabla^h\varphi^{
-1})\bigr|_{\{r\}\times N}+d\gamma.
\end{align*}
From here we deduce immediately that at $r=0$
\[ \int_{\partial
M}\TPf(\varphi\nabla^g\varphi^{-1}\bigr|_{r=0},\partial_r)=\int_{\partial
M}\TPf(\varphi\nabla^h\varphi^{-1}\bigr|_{r=0},\partial_r)+\int_{\partial
M}\TPf(\nabla_1^h,\nabla_1^g).
\]
 \end{proof}

 \begin{rem} For horizontal variations of the metric that are constant along the
fiber, more precise computations are given in Theorem \ref{olst}.
 \end{rem}
The sum in the previous Theorem also has an alternative characterization.
 
\begin{prop} \label{Tcorfac1} 
If $\nabla'$ is the connection from \eqref{nabla'}, then $\varphi
\nabla'\varphi^{-1}=d\oplus \pi_2^*\nabla^{\VN }\oplus \pi_2^*\pi^*\nabla^B$ and
\begin{equation}\label{trgl4}\sum_{j=0}^{(f-1)/2}{\tilde{c}\left(\tfrac{f-1}{2}
-j\right)}
\int_B\left( {\Pf(h^B)}\int_{N/B}P_{j,f}\left(h^{V}\right)\right)
=\int_{\{0\}\times N}\TPf(\varphi
\nabla'\varphi^{-1},\varphi\nabla^h\varphi^{-1}).
\end{equation}
\end{prop}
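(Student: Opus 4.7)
The plan is to handle the two assertions of the proposition separately. For the first identification $\varphi\nabla'\varphi^{-1} = d \oplus \pi_2^*\nabla^{\VN} \oplus \pi_2^*\pi^*\nabla^B$, I would argue block-by-block in the decomposition $TM|_{U^c} = \bR \oplus \pi_2^*\VN \oplus \pi_2^*\pi^*TB$. Since $\varphi$ is the identity on the first and third summands, conjugation does nothing there. On $\pi_2^*\VN$, where $\varphi$ acts as multiplication by $r$, a short calculation on a section $Y$ gives
\begin{align*}
\varphi\nabla'_{\partial_r}\varphi^{-1}Y ={}& r\bigl[\partial_r(Y/r) + r^{-1}(Y/r)\bigr] = \partial_r Y,\\
\varphi\nabla'_X\varphi^{-1}Y ={}& r \cdot r^{-1}\nabla^{\VN}_X Y = \nabla^{\VN}_X Y
\end{align*}
for $X$ tangent to $N$, matching $\pi_2^*\nabla^{\VN}$. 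This settles the first assertion.

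For the transgression identity, set $\nabla^0 := \varphi\nabla'\varphi^{-1}|_{r=0}$ and $\nabla^1 := \varphi\nabla^h\varphi^{-1}|_{r=0}$. By Corollary \ref{Cor72}, the difference $\eta := \nabla^1 - \nabla^0$ is an off-block-diagonal $1$-form coupling only $\bR\partial_r$ and $\pi_2^*\VN$, leaving $\pi_2^*\pi^*TB$ untouched. Consequently both connections preserve the splitting $TM|_{\partialM } = E^v \oplus E^h$ with $E^v := \bR\partial_r \oplus \pi_2^*\VN$ and $E^h := \pi_2^*\pi^*TB$, and they agree with $\pi^*\nabla^B$ on $E^h$; the curvature along the affine path $\nabla^s = \nabla^0 + s\eta$ then splits as $R^s = R^{s,v} \oplus \pi^*R^B$, with the horizontal component independent of $s$. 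Substituting into Proposition \ref{P5} and factoring the Berezin integral as $\cB = \cB_{h^v}\cdot \cB_{g^B}$, only the term $\binom{k-1}{(f-1)/2}\,\eta\wedge(R^{s,v})^{(f-1)/2}\wedge(\pi^*R^B)^{b/2}$ of the binomial expansion of $\eta\wedge(R^s)^{k-1}$ has the correct bi-degree; the combinatorial identity $\binom{k-1}{(f-1)/2}(b/2)!/(k-1)! = 1/((f-1)/2)!$ then produces the clean factorization
\begin{equation*}
\TPf(\nabla^0, \nabla^1) = \pi^*\Pf(h^B) \wedge \TPf(\nabla^{0,v}, \nabla^{1,v}),
\end{equation*}
where $\nabla^{i,v}$ denotes the restriction of $\nabla^i$ to the rank-$(f+1)$ bundle $E^v$. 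When $b$ is odd, both sides vanish by the same degree count.

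To finish, restrict to a fiber $F := N_b$. Corollary \ref{Cor72} specializes to $\eta|_F(X)(\partial_r) = X$ and $\eta|_F(X)(Y) = -\langle X, Y\rangle_{g^V}\partial_r$ for $X, Y \in TF$, and a direct Koszul calculation on the geometric cone $((-\epsilon, 0)\times F, dr^2 + r^2 g^V)$ shows that, after $\varphi$-conjugation, its Levi-Civita connection is independent of $r$ and matches $\nabla^{1,v}|_F$ exactly. Since $\varphi$ is a bundle isometry between the cone metric and $h^{\varphi}$ along the slice, $\TPf(\nabla^{0,v}|_F, \nabla^{1,v}|_F) = \TPf(F, g^V, 1) = \oPf(g^V)$ by Remark \ref{nrem}. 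Because $\pi^*\Pf(h^B)$ has maximal horizontal degree $b$, wedging it against $\TPf(\nabla^{0,v}, \nabla^{1,v})$ annihilates every component of positive horizontal degree, so only the purely vertical piece survives; this piece agrees fiberwise, and hence globally, with $\oPf(g^V)$. Fubini along $\pi\colon N\to B$ then yields $\int_N \TPf(\nabla^0, \nabla^1) = \int_B \Pf(h^B)\int_{N/B}\oPf(g^V)$, and expanding $\oPf(g^V)$ via its definition (cf.\ Corollary \ref{cor1}) reproduces the left-hand side of \eqref{trgl4}. The main obstacle will be the combinatorial bookkeeping behind the splitting $\TPf(\nabla^0, \nabla^1) = \pi^*\Pf(h^B)\wedge \TPf(\nabla^{0,v}, \nabla^{1,v})$, together with the verification that $\nabla^{1,v}|_F$ coincides under $\varphi$-conjugation with the Levi-Civita of the geometric cone over the fiber.
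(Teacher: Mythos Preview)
Your argument is correct, but it takes a substantially more computational route than the paper. The paper's proof is almost a one-liner once the first assertion is established: from Corollary~\ref{Cor72} one sees that $\varphi\nabla'\varphi^{-1}|_{r=0}$ is precisely the cylindrical connection $d\oplus P(\varphi\nabla^h\varphi^{-1})P$ associated to the section $\partial_r$ (in the sense of Example~\ref{ex1}). Hence, by the very definition of $\TPf(\nabla,s)$, the right-hand side of \eqref{trgl4} is
\[
\int_{\partialM}\TPf\bigl(\varphi\nabla^h\varphi^{-1}\bigr|_{r=0},\partial_r\bigr),
\]
which in turn equals $\lim_{r\to 0}\int_{\{r\}\times N}\TPf^h$ by \eqref{eq88}. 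This last limit was already computed to be the left-hand side of \eqref{trgl4} in the course of proving Theorem~\ref{Theorem1}. No new block decomposition, degree counting, or fiberwise comparison with the geometric cone is needed.

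Your approach instead unpacks the transgression directly: you exploit the block splitting $E^v\oplus E^h$ preserved by the affine family, factor $\TPf(\nabla^0,\nabla^1)=\pi^*\Pf(h^B)\wedge\TPf(\nabla^{0,v},\nabla^{1,v})$ via a binomial/degree argument, and then identify the vertical transgression fiberwise with $\oPf(g^V)$ by matching $\nabla^{1,v}|_F$ with the $\varphi$-conjugated cone Levi--Civita. This is sound, and it has the merit of being self-contained---it does not appeal back to the proof of Theorem~\ref{Theorem1}---but it effectively re-derives that computation in disguise. The paper's route is shorter precisely because it recognizes that the work has already been done and that the identity is a tautology once one sees $\varphi\nabla'\varphi^{-1}|_{r=0}$ as the cylindrical connection for $\partial_r$.
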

\begin{proof} The first statement is a simple computation. The left hand side of
(\ref{trgl4}) is equal, by the proof of Theorem \ref{Theorem1} to
 \[\lim_{r\ra 0}\int_{\{r\}\times N}
\TPf\left(\nabla^h,s_h(r)\right)=\int_{\partialM
}\TPf(\varphi\nabla^h\varphi^{-1}
 \bigr|_{r=0},\partial_r).\]
 From \eqref{extat0}  we see that at $r=0$ the projection of 
$\varphi\nabla^h\varphi^{-1}$ on $TN$ gives the connection $ \pi_2^*\nabla^{\VN
}\oplus \pi_2^*\pi^*\nabla^B$, hence the induced block-diagonal connection from
$\varphi\nabla^h\varphi^{-1}$ with respect to the decomposition
$TM=\bR\partial_r\oplus TN$ (as in Example \ref{ex1}) is exactly
$\varphi\nabla'\varphi^{-1}$. Hence, by definition:
 \[\TPf(\varphi
\nabla'\varphi^{-1}\bigr|_{r=0},\varphi\nabla^h\varphi^{-1}\bigr|_{r=0}
)=\TPf(\varphi\nabla^h\varphi^{-1}
 \bigr|_{r=0},\partial_r)
 \]
\end{proof}

We use this result in order to give a more geometric expression to the boundary
contribution for first-order perturbations of conical model metrics.

\begin{definition} Let $g$ be a first-order perturbation of the \emph{conical}
metric 
\[ h=dr^2\oplus r^2g^N
\]
on $(-\epsilon,0]\times N$. Define the (asymptotic) second fundamental form
$\II^g$ of $\partialM :=\{0\}\times N$ as follows:
\[\II^g(X,Y):=
h^{\varphi}\left((\nabla_1^g)_XY,\partial_r\right)=g^{\varphi}
\left((\nabla_1^g)_XY,\partial_r\right)
\]
where $\nabla_1^g=\varphi\nabla^g\varphi^{-1}\bigr|_{TM\bigr|_{\partialM }}$ is
the connection resulting from Theorem \ref{Te1}.
\end{definition}
Denote by $R^N$ the curvature form of the metric $g^N$ and set
\[\mathcal{G}_{j,2k-1}^{\partialM
}:=\frac{1}{j!(2k-1-2j)!}\cB_{h^{\varphi}}\left((R^N)^j\wedge
(\II^g)^{2k-1-2j}\right).
\]
\begin{theorem} For first-order perturbations $g$ of conical metrics $dr^2\oplus
r^2g^N$ the following holds
\[ (2\pi)^k\chi(M)=\int_{M}\Pf^g-\sum_{j=0}^{k-1}(-1)^j(2j-1)!!\int_{\partialM
}\mathcal{G}_{k-1-j,2k-1}^{\partialM }.
\]
\end{theorem}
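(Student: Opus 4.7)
The plan is to apply Theorem \ref{trgl}(a) to the first-order perturbation $g$ of the conical model $h = dr^2 \oplus r^2 g^N$, interpreted as an edge metric whose base $B$ reduces to a point (so that $\partialM = N$ and the $\pi^*TB$ block in Corollary \ref{Cor72} disappears). This immediately reduces the theorem to the identity
\[ \int_{\partialM} \TPf(\nabla_1^g,\partial_r) \;=\; \sum_{j=0}^{k-1}(-1)^j(2j-1)!!\int_{\partialM}\mathcal{G}^{\partialM}_{k-1-j,2k-1}, \]
where $\nabla_1^g = \varphi\nabla^g\varphi^{-1}|_{r=0}$ is the extended connection furnished by Theorem \ref{Te1}(i), and $\partial_r$ is unit for $g^\varphi|_{r=0}=h^\varphi|_{r=0}$.

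Next I would expand $\TPf(\nabla_1^g,\partial_r)$ explicitly, replaying the algebraic manipulations from the proof of the classical boundary Gauss-Bonnet formula \eqref{eq-4}. Those manipulations are purely formal statements about a metric connection on a bundle with a distinguished unit section: along the affine path $\nabla^s = (1-s)\nabla^0 + s\nabla_1^g$ from the block-split connection $\nabla^0 = d\oplus P_{TN}\nabla_1^g P_{TN}$ to $\nabla_1^g$, the difference $\theta^s = s\eta$ is off-diagonal with respect to the splitting $\bR\partial_r\oplus TN$, so that $\eta\circ\eta = -\tfrac{1}{2}\II^g\wedge\II^g$, while $d^{\nabla^0}\eta$ is annihilated upon wedging with $\eta$ because both carry a $\partial_r^*$-factor. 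Running this argument verbatim yields
\[ \TPf(\nabla_1^g,\partial_r) = \frac{1}{(k-1)!}\cB_{h^\varphi|_{TN}}\!\left(\sum_{p=0}^{k-1}\binom{k-1}{p}\frac{(-1)^p}{2^p(2p+1)}(\II^g)^{2p+1}\wedge (R^{\mathrm{proj}})^{k-1-p}\right), \]
where $R^{\mathrm{proj}}$ denotes the curvature of the projected connection $P_{TN}\nabla_1^g P_{TN}$ on $TN$.

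The main obstacle is to identify $R^{\mathrm{proj}}$ with the intrinsic curvature $R^N$ of $(N,g^N)$. For the unperturbed model this is immediate from Corollary \ref{Cor72}, which with $B$ a point reads $P_{TN}\nabla_1^h P_{TN} = \nabla^{\VN } = \nabla^N$. For the first-order perturbation it suffices to show that the $(TN,TN)$-block of $\nabla_1^g - \nabla_1^h$ vanishes. I would verify this by conjugating the Christoffel formula of Lemma \ref{Crf} by $\varphi$: because $C^\varphi = I + rD$ for a first-order perturbation, one computes $\nabla^\varphi_X C^\varphi|_{r=0} = (Xr)D$, which vanishes whenever $X$ is tangent to $\partialM$. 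Substituting $X,Y,Z\in TN$ into the $\varphi$-conjugated Christoffel identity, all three terms on the right vanish at $r=0$, forcing the $TN$-component of $\omega^\varphi(X)(Y)$ to vanish, as required.

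To conclude, I would reindex via $l = k-1-p$ and invoke the standard double-factorial identity $(2p)! = 2^p\,p!\,(2p-1)!!$ (the very one appearing in the closing remark of the proof of \eqref{eq-4}), which rewrites $\binom{k-1}{p}/((k-1)!\,2^p(2p+1))$ as $(2p-1)!!/((k-1-p)!\,(2p+1)!)$ and produces $\sum_{p}(-1)^p(2p-1)!!\,\mathcal{G}^{\partialM}_{k-1-p,2k-1}$, as desired. A sanity check at $g=h$, where $\II^g=g^N$ gives $\mathcal{G}^{\partialM}_{j,2k-1}=P_{j,2k-1}(g^N)$, recovers Theorem \ref{Tconca} with inclination $-f'(0)=1$.
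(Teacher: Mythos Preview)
Your proposal is correct and follows essentially the same route as the paper. You start directly from Theorem \ref{trgl}(a), whereas the paper reaches the same transgression $\TPf(\varphi\nabla'\varphi^{-1},\nabla_1^g)$ by first invoking Theorem \ref{Tcorfac} together with Proposition \ref{Tcorfac1}; thereafter both arguments verify via the $\varphi$-conjugated Christoffel formula that the block-diagonal part of $\nabla_1^g$ is $d\oplus\nabla^N$ and then replay the algebra from the proof of \eqref{eq-4}. One small point worth making explicit in your third step: for the second and third terms of \eqref{eq100} the vanishing at $r=0$ is not just ``$(Xr)D=0$'' but uses the cancellation $\varphi^{-1}(Y')=r^{-1}Y'$ against $\varphi(X)=rX$ in the conical case, exactly as the paper notes.
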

\begin{proof} Proposition \ref{Tcorfac1} and Theorem \ref{Tcorfac} together say
that the contribution of the boundary is
\[\int_{\partialM } \TPf(\varphi
\nabla'\varphi^{-1},\varphi\nabla^h\varphi^{-1})+\int_{\partialM
}\TPf(\varphi\nabla^h\varphi^{-1},\varphi\nabla^g\varphi^{-1})=\int_{\partialM }
\TPf(\varphi\nabla'\varphi^{-1},\varphi\nabla^g\varphi^{-1}).
\]
In the conical case
\[\varphi\nabla'\varphi^{-1}=d\oplus \pi_2^*\nabla^N.
\]
and these are also the block-diagonal components of
$\varphi\nabla^g\varphi^{-1}$ at $r=0$. In order to justify this let us take
another look at \eqref{eq100}. When $X$ is tangent to $\partialM $ then since
$C^{\varphi}$ is the identity on $\partialM $ we get that at $r=0$ one has
\[h^{\varphi}\left((\nabla^{\varphi}_X C^{\varphi})(Y'),Z'\right)=0.
\]
On the other hand, $\varphi^{-1}(Y')=r^{-1}Y'$ and $\varphi(X)=rX$ for $X,Y'\in
\Gamma(TN)$. Then the factors $r^{-1}$ and $r$ cancel each other out and one
has:
\[h^{\varphi}\left(
(\nabla_{\varphi^{-1}(Y')}^{\varphi}C^{\varphi})(\varphi(X))),Z'\right)=0.
\]
The last term of \eqref{eq100} is similar and therefore also vanishes. We
conclude that for $X\in T\partialM $, $Y',Z'\in T\partialM $ 
\[ h^{\varphi}(\omega^{\varphi}(X)(Y'),Z')=0.
\]
One sees easily that the same holds for $Y',Z'=\partial_r$. This justifies the
claim that the block diagonal components of $\varphi\nabla^g\varphi^{-1}$ and
$\varphi \nabla^h\varphi^{-1}$ when restricted to $r=0$ are the same. But the
block diagonal components of $\varphi \nabla^h\varphi^{-1}$ are the same as
those of $\varphi\nabla'\varphi^{-1}$.
 Finally, the off-diagonal components of $\varphi\nabla^g\varphi^{-1}$ are
precisely the components of $\II^g$.
 
 The situation is similar now to the proof of the Gauss-Bonnet formula
\ref{eq-4}, 
and the transgression
$\TPf(\varphi\nabla'\varphi^{-1},\varphi\nabla^g\varphi^{-1})$ can be computed
accordingly.
\end{proof}

\section{Perturbations of manifolds with fibered boundary} \label{Secaden}

Recall that an end of a manifold with fibered boundary is modeled on
$(1,\infty)\times N$ with the metric
\[ dr^2\oplus g^V \oplus r^2\pi^*g^B
\]
It is convenient to set $u=r^{-1}$. In the new coordinate, the metric
on $U^c=(0,1)\times N$ is of type:
\[ h= (d(u^{-1}))^2\oplus g^V\oplus u^{-2}\pi^*g^B.
\]
This leads us to consider, in the spirit of the previous section, the
following endomorphism $\varphi:\bR\oplus TN\bigr|_{U}$:
\begin{align*} 
\varphi(s,v,w)=(u^{-2}s,v,u^{-1}w),&& s\in \bR,\; v\in \pi_2^*\VN ,\; w\in
\pi_2^*\pi^*TB
\end{align*}
where we use $\partial_u$ as the coordinate vector on $\bR$. Then clearly
\[ h^{\varphi}(X',Y'):=h(\varphi^{-1}(X'),\varphi^{-1}(Y'))
\]
extends to a smooth metric on $(-1,0]\times N=:U$. 
\begin{theorem} \label{Te2} 
Let $\nabla^h$ be the Levi-Civita connection of $h$
on $U^c$. Then $\varphi\nabla^h\varphi^{-1}$ extends to a smooth connection on
$U$ compatible with $h^{\varphi}$.
\end{theorem}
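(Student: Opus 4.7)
The proof is an adaptation of the argument for Theorem \ref{fundprop} to the complete fibered-boundary setting. Note first that $h^{\varphi}$-compatibility of any smooth extension is automatic: on $U^c$, $\varphi$ is an $(h, h^{\varphi})$-bundle isometry, so $\varphi\nabla^h\varphi^{-1}$ is $h^{\varphi}$-compatible there, and compatibility passes to the limit. The content is the existence of the smooth extension itself.

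The plan is to introduce an $h$-compatible auxiliary connection $\nabla'$ on $TM|_{U^c}$, designed so that $\varphi\nabla'\varphi^{-1}$ visibly extends to $u=0$, and then to show that the difference $\eta := \nabla^h - \nabla'$ has a block structure compatible with the scalings of $\varphi$. With respect to the $h$-orthogonal splitting $TM|_U = \bR\partial_u \oplus \pi_2^*\VN \oplus \pi_2^*\pi^*TB$ induced by the Ehresmann connection, set
\[
\nabla' := \nabla^{(1)} \oplus \left[\tfrac{\partial}{\partial u}\,du + \nabla^{\VN}\right] \oplus \left[\left(\tfrac{\partial}{\partial u} - \tfrac{1}{u}\right)du + \pi^*\nabla^B\right],
\]
where $\nabla^{(1)}$ is the unique metric connection on the line bundle $\bR\partial_u$, explicitly $\nabla^{(1)}_X \partial_u = -2u^{-1}X(u)\,\partial_u$, dictated by $|\partial_u|^2_h = u^{-4}$. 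The shift $-1/u$ in the horizontal summand is likewise forced by $|Y|^2_h = u^{-2}|Y|^2_{g^B}$ for $Y\in\HN$; short Koszul checks confirm compatibility of each summand. A direct calculation using $\varphi = \mathrm{diag}(u^{-2},\,1,\,u^{-1})$ then shows $\varphi\nabla'\varphi^{-1}$ becomes $d \oplus (\tfrac{\partial}{\partial u}du + \nabla^{\VN}) \oplus (\tfrac{\partial}{\partial u}du + \pi^*\nabla^B)$, smooth on all of $U$ and $h^{\varphi}$-compatible.

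It remains to estimate $\eta(X)\in \End^-(TM,h)$ block-by-block. Writing $\eta(X)$ as a $3\times 3$ block matrix in the splitting, the required behaviour for $\varphi\eta\varphi^{-1}$ to extend is: the $(1,2)$-block is $O(u^2)$, the $(1,3)$- and $(3,2)$-blocks are $O(u)$, and all remaining blocks (including the diagonal) are $O(1)$; the $h$-skew-symmetry of $\eta(X)$ pairs the $(i,j)$- with the $(j,i)$-block, eliminating half of the verifications. The estimates follow from running Koszul on each relevant triple $(X, Y, Z)$ drawn from $\{\partial_u\} \cup \Gamma(\pi_2^*\VN) \cup \Gamma(\pi_2^*\HN)$ with lifts constant in $u$, using: $\partial_u h|_{\VN} = 0$ (since $g^V$ is $u$-independent), $\partial_u h|_{\HN} = -2u^{-3}\pi^*g^B$, $[\partial_u,\cdot] = 0$ on lifts, and Lemma \ref{nl1} applied to the Riemannian submersion $\pi\colon N \to B$ to control the remaining mixed blocks.

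The main obstacle is bookkeeping: compared with Theorem \ref{fundprop}, both $\bR\partial_u$ and $\pi^*TB$ now carry nontrivial $\varphi$-scalings, so more blocks demand individual attention. The most delicate verification is for the $(1,3)$-block: for horizontal lifts $X, Y$, Koszul gives $h(\nabla^h_X Y, \partial_u) = u^{-3}\pi^*g^B(X, Y)$, so the $\bR\partial_u$-component of $\nabla^h_X Y$ equals $u\cdot\pi^*g^B(X, Y)\,\partial_u$, precisely the $O(u)$ needed to absorb the $u^{-1}$ amplification from $\varphi$-conjugation. Once all nine blocks are verified, $\varphi\nabla^h\varphi^{-1}=\varphi\nabla'\varphi^{-1}+\varphi\eta\varphi^{-1}$ extends smoothly on $TM|_U$, and $h^{\varphi}$-compatibility is inherited from $U^c$.
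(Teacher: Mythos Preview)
Your proof is correct and follows essentially the same route as the paper: the auxiliary connection you write down coincides with the paper's choice $\nabla'=[d-\tfrac{2}{u}du]\oplus \pi_2^*\nabla^{\VN}\oplus[(\tfrac{\partial}{\partial u}-\tfrac{1}{u})du+\pi_2^*\pi^*\nabla^B]$ (your $\nabla^{(1)}$ is exactly $d-\tfrac{2}{u}du$), and the block-by-block Koszul analysis of $\eta=\nabla^h-\nabla'$ is precisely what the paper means by ``carefully analyzes the blocks.'' The paper additionally notes that the unit normal to the slices is $u^2\partial_u$ so that Lemma~\ref{L1} can be invoked to shortcut the second-fundamental-form computations, but your direct Koszul approach is equivalent.
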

\begin{proof} 
The proof follows closely that of Theorem \ref{fundprop}. 
The auxiliary connection $\nabla'$ is
\[ \nabla'=\left[d-\frac{2}{u}du\right]\oplus \pi_2^*\nabla^{\VN }\oplus
\left[\left(\frac{\partial}{\partial
u}-\frac{1}{u}\right)du+\pi_2^*\pi^*\nabla^B\right]
\]
where $d$ is the trivial connection on $\bR$ and $\nabla^{\VN }$ is the
projection
of the Levi-Civita connection of a slice $u=\text{constant}$ to $\VN $. 
We notice that
\begin{itemize}
\item[(a)] $\varphi\nabla'\varphi^{-1}$ extends smoothly;
\item[(b)] $d-\frac{2}{u}du$ and $\pi_2^*\nabla^{\VN }$ are the projections of
the
Levi-Civita connection $\nabla^h$ to $\bR$ and to $\pi_2^*\VN $ respectively.
\item[(c)] $\partial_u$ is  orthogonal to the slices and the unit normal vector
is $u^2\partial_u$; the vector field $X=u^2\partial_u$ satisfies the conditions
of Lemma \ref{L1} and this allows the computation of the second fundamental form
of the slices in the same vein we did before.
\end{itemize}
One  then carefully analyzes the blocks of the $1$-form $\varphi
(\nabla^h-\nabla')\varphi^{-1}$ and sees that they extend as well. 
\end{proof}
\begin{rem} One might prefer to work directly with the $r$ coordinate. In that
case one first needs to turn $(1,\infty]$ into a manifold and this can be done
via the unique chart $(-1,0]\ra (1,\infty]$ where $u\mapsto -1/u$ for $u\neq 0$ and
$0\ra \infty$. Then the vector field that trivializes the tangent bundle of
$(1,\infty]$ (using the standard coordinate of $(1,\infty$) is
$\widetilde{\partial_r}:=r^2\partial_r$ which makes sense also at $\infty$ and
corresponds to $\partial_u$. Consequently the metric on $(1,\infty)\times N$ in
these coordinates can be written as $r^4\widetilde{dr}^2\oplus g^V\oplus
r^2g^B$ and $\varphi(s,v,w)=(r^2s,v,rw)$, etc.
\end{rem}

\begin{definition}\label{pertdef2}
A  perturbation of the model fibered boundary metric $h$ is a metric $g$ such
that $g^{\varphi}$ extends smoothly
to a metric on $TM\bigr|_{U}$ and 
\[ g^{\varphi}=h^{\varphi}+f(u)\alpha
\]
for some smooth function $f$ on $(-1,0]$ that vanishes at $0$, and some smooth
bilinear symmetric form $\alpha$ on $TM\bigr|_{U}$. It is called of first,
respectively second order if $f(u)=O(u)$, respectively $f(u)=O(u^2)$.
\end{definition}

\begin{lem} A perturbation of first, resp.\ second order for the metric
$h=dr^2\oplus r^2g^N$ on $(1,\infty)\times N$ is a metric $g$ such that
\[ g=h+O(r^{-1})\cdot \gamma_N(r) dr^2+O(1)\cdot (dr\otimes
\beta_N(r)+\beta_N(r)\otimes dr)+O(r)\cdot \alpha_N(r)
\]
respectively
\[g=h+O(r^{-2})\cdot  \gamma_N(r) dr^2+O(r^{-1}) \cdot (dr\otimes \beta_N(r)+
\beta_N(r)\otimes dr)+O(1)\cdot\alpha_N(r)
\]
where $\gamma_N(r)$, $\beta_N(r)\in \Omega^1(N)$ and $\alpha_N(r)\in
\Gamma^+(T^*N\otimes T^*N)$ are smooth families of $0$ and $1$-forms, resp.\
symmetric $(1,1)$ double forms on $N$ which extend smoothly at $\infty$, i.e.,
when composed with $-1/u$ they extend smoothly to $u=0$.
\end{lem}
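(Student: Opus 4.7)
The plan is to unravel Definition \ref{pertdef2} directly into an asymptotic expansion of $g-h$ in the $r$ coordinate. Since $\varphi$ is a bundle isomorphism on $U^c$, the defining relation $g^\varphi = h^\varphi + f(u)\alpha$ (with $f(u)=O(u^p)$ for $p=1$ or $p=2$) is equivalent to
\[
g(X,Y) = h(X,Y) + f(u)\,\alpha(\varphi X,\varphi Y),\qquad X,Y \in TM\bigr|_{U^c}.
\]
So the task reduces to computing $\alpha(\varphi X,\varphi Y)$ explicitly and reading off the orders in $r$.

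The main step is to decompose $\alpha$, a smooth symmetric bilinear form on $U$, along the natural splitting $TM\bigr|_U = \bR\partial_u \oplus TN$:
\[
\alpha = a\,du\otimes du + du\otimes\beta + \beta\otimes du + \gamma,
\]
where $a \in C^\infty(U)$, $\beta$ is a smooth family (in $u\in(-1,0]$) of $1$-forms on $N$, and $\gamma$ is a smooth family of symmetric $(1,1)$ double forms on $N$; smoothness of $\alpha$ at $u=0$ is equivalent to smoothness of $a,\beta,\gamma$ there. Using $\partial_r = -u^2\partial_u$ together with $\varphi(s\,\partial_u + w) = u^{-2}s\,\partial_u + u^{-1}w$ (the vertical bundle is trivial in this scattering-type case, so only the radial and horizontal scalings intervene), one gets
\[
\varphi(X_r\partial_r + X_N) = -X_r\,\partial_u + u^{-1}X_N.
\]
Substituting and reassembling as a symmetric $2$-tensor in the $r$-frame yields
\[
g - h = f(u)\,a\,dr^2 - f(u)u^{-1}(dr\otimes\beta + \beta\otimes dr) + f(u)u^{-2}\,\gamma.
\]

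With $u = r^{-1}$ the three coefficients have orders $O(r^{-p})$, $O(r^{1-p})$, $O(r^{2-p})$ respectively, giving precisely the two stated expansions for $p=1$ and $p=2$, with $\gamma_N(r) := a$, $\beta_N(r) := -\beta$, $\alpha_N(r) := \gamma$, each viewed as a smooth family on $N$ parametrized by $r \in (1,\infty)$. The required smoothness of these families at $r = \infty$ after precomposing with the compactification chart $u = -1/r$ is exactly the smoothness of $a,\beta,\gamma$ at $u = 0$, which was built in from the start.

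The lemma is in essence a coordinate translation of the definition, so there is no real conceptual obstacle. The sole place where a slip could occur is bookkeeping: correctly carrying through the three distinct scalings ($\mathrm{id}$ on the (absent) vertical, factor $u^{-1}$ on horizontal, factor $u^{-2}$ on the $\partial_u$-line) that $\varphi$ applies to the summands of $\bR\partial_u \oplus TN$, together with the Jacobian $\partial_r = -u^2\partial_u$ relating the $r$- and $u$-frames.
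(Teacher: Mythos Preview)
Your proof is correct and is precisely the direct unwinding of Definition \ref{pertdef2} that the paper has in mind; the paper's own proof is the single word ``Straightforward,'' and your argument supplies exactly those straightforward details. The only cosmetic point is the sign convention for $u$: the section opens with $u=r^{-1}$ (which you follow, giving $\partial_r=-u^2\partial_u$), while the lemma statement and the subsequent remark implicitly use the chart $r=-1/u$; either way the orders in $r$ are unaffected and the stray sign is absorbed into your identification $\beta_N(r):=-\beta$.
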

\begin{proof} Straightforward.
\end{proof}

\begin{theorem} For a first-order perturbation $g$ of $h$, the connection
$\varphi\nabla^g\varphi^{-1}$ extends to a smooth connection, while for a second
order perturbation the restriction of $\varphi\nabla^g\varphi^{-1}$ to $u=0$ (or
$r=\infty$) coincides with the restriction of $\varphi\nabla^h\varphi^{-1}$ to $u=0$.
\end{theorem}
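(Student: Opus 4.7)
The plan is to follow closely the proof of Theorem \ref{Te1}, using Lemma \ref{Crf} (the Christoffel formula) conjugated by $\varphi$ together with the extension of $\nabla^{\varphi} := \varphi\nabla^h\varphi^{-1}$ provided by Theorem \ref{Te2}. Write $C^{\varphi} := \varphi C\varphi^{-1} = I + f(u)\tilde D$ with $f(u) = O(u^p)$ for $p=1$ (first-order) or $p=2$ (second-order), and with $\tilde D$ smooth and $h^{\varphi}$-symmetric; in particular $C^{\varphi}|_{u=0} = I$, so $C^{\varphi}$ is invertible in some neighborhood of $\{u=0\}$. Conjugating the Christoffel identity by $\varphi$ via Lemma \ref{il2}, exactly as in equation \eqref{eq100}, we obtain
\begin{align*}
2h^{\varphi}\bigl(C^{\varphi}\omega(X)^{\varphi}(Y'),Z'\bigr)
={}& h^{\varphi}\bigl((\nabla^{\varphi}_X C^{\varphi})(Y'),Z'\bigr) \\
&+ h^{\varphi}\bigl((\nabla^{\varphi}_{\varphi^{-1}(Y')} C^{\varphi})(\varphi(X)),Z'\bigr) \\
&- h^{\varphi}\bigl((\nabla^{\varphi}_{\varphi^{-1}(Z')} C^{\varphi})(\varphi(X)),Y'\bigr).
\end{align*}
Since $C^{\varphi}$ is invertible near $\{u=0\}$, regularity of $\omega(X)^{\varphi}$ will follow from regularity of the right-hand side for arbitrary smooth $X, Y', Z'$.

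The novelty relative to the edge setting is in how $\varphi$ acts: in the fibered boundary case $\varphi^{-1}$ is everywhere smooth (sending $\partial_u \mapsto u^2\partial_u$, vertical to vertical, horizontal $w \mapsto u w$), while $\varphi$ itself blows up on the $\partial_u$ and horizontal components. The dangerous object in the Christoffel formula is therefore $\varphi(X)$ rather than $\varphi^{-1}(Y')$, dual to the situation in Theorem \ref{Te1}. Expanding
\[
\nabla^{\varphi}_W C^{\varphi} = W(f)\,\tilde D + f\,\nabla^{\varphi}_W \tilde D,\qquad W := \varphi^{-1}(Y'),
\]
one has the crucial improved Leibniz identity $W(u^p) = O(u^{p+1})$ (because $\varphi^{-1}(\partial_u)$ already carries a factor of $u^2$), so the potential $u^{-2}$ blow-up of $\varphi(X)$ in the $\partial_u$ direction is compensated by a corresponding vanishing of $(\nabla^{\varphi}_W C^{\varphi})(\partial_u)$ of order $u^{p+1}$.

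The main technical obstacle is a frame-by-frame analysis of $(\nabla^{\varphi}_W C^{\varphi})(\varphi X)$ on the decomposition $\{\partial_u, \pi_2^*\VN, \pi_2^*\pi^*TB\}$ of $X$, coupled with bookkeeping of $u$-powers and invocation of the $h^{\varphi}$-symmetry of $\tilde D$ to confirm that, after pairing against the smooth $Z'$ (respectively $Y'$), no negative powers of $u$ survive in any of the three terms on the right-hand side. Once this is accomplished, the conclusion follows as in Theorem \ref{Te1}: for first-order perturbations each term on the right-hand side extends smoothly to $\{u=0\}$ and hence so does $\omega(X)^{\varphi}$; for second-order perturbations each term is in fact $O(u)$, forcing $\omega(X)^{\varphi}\bigr|_{u=0} = 0$ and therefore $\varphi\nabla^g\varphi^{-1}\bigr|_{u=0} = \varphi\nabla^h\varphi^{-1}\bigr|_{u=0}$.
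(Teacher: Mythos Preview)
Your overall strategy matches the paper's: reduce to the conjugated Christoffel identity \eqref{eq100} and invoke the smooth extension of $\nabla^{\varphi}$ from Theorem~\ref{Te2}. You also go further than the paper's two-line proof by correctly isolating the genuine new difficulty: here $\varphi^{-1}$ is smooth (so the subscript $\varphi^{-1}(Y')$ is harmless, which is all the paper notes), while it is $\varphi(X)$, appearing as the \emph{argument} in the last two terms of \eqref{eq100}, that carries the factors $u^{-2}$ (on $\partial_u$) and $u^{-1}$ (on horizontals).

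However, your proposed compensation mechanism does not close the gap. You claim that $(\nabla^{\varphi}_W C^{\varphi})(\partial_u)=O(u^{p+1})$ because $W(u^p)=O(u^{p+1})$. Writing $\nabla^{\varphi}_W C^{\varphi}=W(f)\,\tilde D+f\,\nabla^{\varphi}_W\tilde D$, the improved Leibniz identity indeed gives $W(f)=O(u^{p+1})$, but it says nothing about the second summand. When $Y'$ is \emph{vertical} one has $W=\varphi^{-1}(Y')=Y'$ with no extra $u$-factor, so $f\,\nabla^{\varphi}_{Y'}\tilde D$ is only $O(u^{p})$, not $O(u^{p+1})$. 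Feeding in $\varphi(\partial_u)=u^{-2}\partial_u$ therefore yields a contribution of order $u^{p-2}$, which is singular for $p=1$ and merely bounded for $p=2$. The ``improved Leibniz'' argument only works when $Y'$ lies in the $\partial_u$ or horizontal directions, precisely because those are the directions on which $\varphi^{-1}$ gains a positive power of $u$; on the vertical factor there is no gain.

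So the frame-by-frame bookkeeping you allude to cannot succeed term-by-term for vertical $Y'$; one needs either an explicit cancellation between the second and third terms of \eqref{eq100} (after using the $h^{\varphi}$-symmetry to move $\varphi(X)$ across the pairing), or finer information about $\nabla^{\varphi}$ at $u=0$ (e.g.\ that $\nabla^{\varphi}_{Y'}\partial_u=0$ for vertical $Y'$, together with structure of $\nabla^{\varphi}$ on the other blocks coming from the proof of Theorem~\ref{Te2}). The paper's own proof is silent on this point as well, so you have correctly located the crux; but the resolution you sketch is not sufficient as stated.
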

\begin{proof} Almost  identical to Theorem \ref{Te1}. Notice that in formula
\eqref{eq100}, $\nabla^{\varphi}_{\varphi^{-1}(Y')}C^{\varphi}$ makes sense at
$u=0$ as $\varphi^{-1}(s,v,w)=(u^2s,v,uw)$ while $\nabla^{\varphi}$ and
$C^{\varphi}$ extend by Theorem \ref{Te2} and Definition \ref{pertdef2} respectively.
\end{proof}
\begin{cor} The Gauss-Bonnet formula of Theorem \ref{th1.3} holds 
for second-order perturbations  
of a metric with fibered boundary. 
\end{cor}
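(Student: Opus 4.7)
The strategy parallels the proof of Theorem \ref{trgl}(b) in the incomplete-edge setting. The plan is to exhaust the complete manifold $M$ by compact submanifolds with boundary $M_R := M \setminus \varphi^{-1}\bigl((R,\infty)\times N\bigr)$ for $R \gg 1$. For $R$ large all $M_R$ are mutually diffeomorphic, so $\chi(M_R)=\chi(M)$. Apply the Gauss-Bonnet formula with boundary \eqref{eq-4} for the smooth metric $g$ on $M_R$ to obtain
\[
(2\pi)^k \chi(M) = \int_{M_R} \Pf^g - \int_{\{R\}\times N} \TPf^g,
\]
and pass to the limit $R\to \infty$, equivalently $u\to 0$ in the coordinate of Theorem \ref{Te2}.

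For the bulk term, $\varphi$ is a bundle isometry between $(TM|_{U^c},g)$ and $(TM|_{U^c},g^\varphi)$, so $\Pf(\nabla^g)$ coincides up to sign with $\Pf(\varphi\nabla^g\varphi^{-1})$. Since the preceding theorem asserts that $\varphi\nabla^g\varphi^{-1}$ extends smoothly across $\{u=0\}$ and $g^\varphi$ extends to a smooth metric there, $\Pf^g$ is a smooth form on the compactification and is integrable on $M$; hence $\int_{M_R}\Pf^g \to \int_M \Pf^g$.

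For the boundary transgression, the same isometry identifies
\[
\TPf^g\big|_{\{R\}\times N} \;=\; \TPf\!\left(\varphi\nabla^g\varphi^{-1}\big|_{\{R\}\times N},\, \varphi(s_g(R))\right),
\]
where $s_g(R)$ is the outward $g$-unit normal to the slice and $\varphi(s_g(R))$ is the corresponding $g^\varphi$-unit normal. As $R\to\infty$, the connection restricted to $TM|_{\{R\}\times N}$ tends to $\varphi\nabla^g\varphi^{-1}|_{u=0}$, and the second-order perturbation hypothesis plus the theorem immediately above guarantee
\[
\varphi\nabla^g\varphi^{-1}\big|_{u=0} \;=\; \varphi\nabla^h\varphi^{-1}\big|_{u=0}.
\]
The same limiting identification applies to the normal section. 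Consequently the integrands converge to those obtained with the model metric $h$, and
\[
\lim_{R\to\infty}\int_{\{R\}\times N}\TPf^g \;=\; \lim_{R\to\infty}\int_{\{R\}\times N}\TPf^h,
\]
where the right-hand limit was already computed in the proof of Theorem \ref{th1.3}: it vanishes when $b$ is even and equals $-(2\pi)^{f/2}\chi(F)\int_B \oPf(g^B)$ when $b$ is odd. Substituting into the displayed equation proves the desired formula.

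The only genuinely technical point is the justification of the interchange of limit and integral in the boundary term, which I would handle by noting that the $\TPf$ integrands are polynomial expressions in the connection and curvature coefficients of $\varphi\nabla^g\varphi^{-1}$ read on the compactified tangent bundle; these coefficients depend smoothly on $u$ down to $u=0$, so the convergence is uniform on $N$ and integration over the compact fiber bundle $N\to B$ commutes with the limit. The main conceptual obstacle—namely, producing two well-defined Levi-Civita-type connections at the boundary at infinity in spite of the metric degeneration—has already been overcome by the $\varphi$-conjugation trick of Theorem \ref{Te2} and its perturbative refinement.
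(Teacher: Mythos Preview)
Your argument is correct and is precisely the approach the paper intends: the corollary is stated without proof because it follows from the preceding theorem by the same mechanism as Theorem~\ref{trgl}(b) in the edge case, and you have spelled out that mechanism faithfully (exhaust by $M_R$, apply \eqref{eq-4}, use the $\varphi$-conjugation to get smooth extension of connection and Pfaffian to $u=0$, and then invoke the equality of restricted connections at $u=0$ for second-order perturbations to reduce the boundary limit to the model computation already done in Theorem~\ref{th1.3}). The one cosmetic remark is that in the fibered-boundary setting $u=r^{-1}>0$, so $\det\varphi>0$ and the ``up to sign'' caveat for the Pfaffian is unnecessary here.
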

\begin{example} A catenoid in $\bR^3$ has the following
parametrization 
\[ \mathcal{C}=\{ (\cosh{(v)}\theta,v)\in \bR^3~|~\theta \in S^1,\; v\in \bR\}.
\]
Use the change of coordinates $v=\arcsinh(r)$ in order to write the metric as 
\[ dr^2+(1+r^2)d\theta^2\]
where $\partial_{\theta}$ is the unit tangent vector on $S^1$ with the round
metric. This is a second-order perturbation of the flat metric
$dr^2+r^2d\theta^2$. The catenoid is a minimal surface with two ends, 
its total Gaussian
curvature is $-4\pi$, Euler characteristic $0$, and each end
contributes to the Gauss-Bonnet formula by $1$, which is the integral of the odd 
Pffafian $(2\pi)^{-1}\TPf(S^1,g_{\round},1)$.
\end{example}

\section{Riemannian orbifolds with simple singularities}

Let $M$ be a Riemannian manifold and suppose $G$ is a finite group that acts  by
isometries on $M$ such that the following properties are satisfied:
\begin{itemize}
\item[(i)] $\Fix_G(M)$ is a (necessarily closed) submanifold of $M$;
\item[(ii)] $G$ acts freely on $M\setminus \Fix_G(M)$.
\end{itemize}
The quotient $\hat{M}:=M/G$ is an example of a Riemannian orbifold. We use the
following definition (see \cite{Bor}):
\begin{definition} A Riemannian orbifold $\hat{M}$ is a Hausdorff topological
space endowed with a countable basis of open charts $U_i$, closed under finite
intersection such that each chart $U_i$ is  homeomorphic with  the quotient of
an open set $\tilde{U}_i\subset \bR^n$ endowed with a Riemannian metric $g_i$
(that turns $\tilde{U}_i$ into a geodesically  convex set) modulo the action of
a finite group $G_i$ that acts effectively by isometries on $\tilde{U}_i$.
Moreover, the following data  is part of the structure:

For  each inclusion $\iota:U_i\subset U_j$ there exist
\begin{itemize}
\item[(i)] an injective group morphism $\phi_{ij}:G_i\ra G_j$;
\item[(ii)] an isometric embedding $\tilde{f}_{ij}:\tilde{U}_i\ra \tilde{U}_j$,
equivariant with respect to $\phi_{ij}$
\end{itemize}
fitting a commutative diagram
\[ \xymatrix{   
\tilde{U}_i \ar[d]^{\tilde{f}_{ij}} \ar[r] & 
\tilde{U}_i/G_i \ar[d]^{\tilde{f}_{ij}/G_i}&&  
U_i \ar[ll]_{\sim}\ar@{_{(}->}[d]^{\iota} \\
\tilde{U}_j \ar[r] & 
\tilde{U}_j/\phi_{ij}(G_i)\ar[r]& 
\tilde{U}_j/G_j & 
U_j \ar[l]_{\sim}}
\]
\end{definition}

Clearly, every open subset of an orbifold is an orbifold. 
\begin{definition} Let $M$ and $N$ be two Riemannian orbifolds. Then a
homeomorphism $f:M\ra N$ is an isometry if it is a local isometry, i.e., if for
every pair $m\in M$, $n=f(m)\in N$ there exist
\begin{itemize}
\item[(a)] charts $m\in U\subset M$, $n\in D\subset N$ with corresponding open
sets $\tilde{U}\subset \bR^n$ and $\tilde{D}\subset \bR^n$ and groups $G_U$ and
$G_D$
\item[(b)] a group isomorphism $\phi: G_U\ra G_D$, and
\item[(c)] an isometry $\tilde{f}:\tilde{U}\ra \tilde{D}$ which is equivariant
with respect to $\phi$
\end{itemize}
such that the next diagram commutes
\[ \xymatrix{\tilde{U}\ar[d]^{\tilde{f}}\ar[r] & 
\tilde{U}/G_{U}\ar[d]^{\tilde{f}/G_U} \ar[r]^{\sim} &   
U\ar[d]^f
\\
\tilde{D}\ar[r] &
\tilde{D}/G_{D} \ar[r]^{\sim}& 
D}
\]
\end{definition}

For every point $p\in M$, the isomorphism class of the isotropy group $G_p$ is
unambiguously defined. In a chart $U_i\ni p$ the  group $G_p$ is represented by
the conjugacy class of the isotropy group  of a lift $\tilde{p}\in \tilde{U}_i$.
\begin{definition}
The singular locus $Z$ of an orbifold is:
\[ Z:=\{p\in M~|~G_p\neq \{e\}\}.
\]
\end{definition}

From the above definitions it is clear that $\hat{M}\setminus Z$ inherits a
Riemannian manifold structure and we denote the metric by $g$.
We will consider Riemannian orbifolds $\hat{M}$ for which the singular strata
have a ``nice'' structure.

\begin{definition} \label{defRorb}
A Riemannian orbifold $\hat{M}$ is called with \emph{simple singularities} if
each connected component $Z_i$ of $Z$ has the property that there exists 

\begin{itemize} 
\item an open neighborhood $D_i$ of $Z_i$,
\item a finite group $\Gamma_i$, and
\item a Riemannian manifold $M_i$
\end{itemize} such that
\begin{itemize}
\item[(i)] $\Gamma_i$ acts by isometries on $M_i$, $\Fix_{\Gamma_i}(M_i)$ is a
compact submanifold in $M_i$ and $\Gamma_i$ acts freely on $M_i\setminus
\Fix_{\Gamma_i}(M_i)$; 
\item[(ii)] There exists an isometry of Riemannian orbifolds $h_i:D_i\ra
M_i/\Gamma_i$ such that 
\[h_i(Z_i)=\Fix_{G_i}(M_i).\] 
\end{itemize}
\end{definition}

Any Riemannian orbifold with isolated singularities satisfies
the previous definition.
Denote by $\Fix(\hat{M})$ the set of connected components of the singular locus
$Z$.

\begin{theorem} \label{thro}
Let $\hat{M}$ be a compact Riemannian orbifold with simple
singularities of dimension $2k$ and let $g$ be the Riemannian metric on
$\hat{M}\setminus Z$. Then 
\begin{equation}\label{fineq}
\chi(\hat{M})=\frac{1}{(2\pi)^k}\int_{\Int{\hat{M}}}   \Pf^g + \sum_{Z_i\in
\Fix(\hat{M})} \chi(Z_i)\frac{|G_i|-1}{|G_i|}.
\end{equation}
\end{theorem}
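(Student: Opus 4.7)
The plan is to reduce the orbifold Gauss--Bonnet identity to the smooth submanifold case treated in Example \ref{examp1}, using that each singular neighborhood $D_i \cong M_i/G_i$ is locally the quotient of a smooth Riemannian manifold by a finite group of isometries that fixes $\widetilde{Z}_i := \Fix_{G_i}(M_i)$ pointwise and acts freely elsewhere.

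First I would pass to the oriented real blow-up $\widetilde{\hat{M}}$ of $\hat{M}$ along $Z = \bigsqcup_i Z_i$. Locally near $Z_i$, this is the $G_i$-quotient of the ordinary blow-up $\widetilde{M}_i$ of $M_i$ along $\widetilde{Z}_i$; because $G_i$ acts freely on the unit normal sphere bundle $S\nu\widetilde{Z}_i$, the space $\widetilde{\hat{M}}$ is a compact smooth manifold with boundary $\bigsqcup_i S\nu\widetilde{Z}_i/G_i$. By Theorem \ref{tvsv}, the smooth Riemannian metric on $\hat{M}\setminus Z$ lifts, in a collar of each boundary component, to a second-order perturbation of $\hat{h}_1 = h_1 - 2r\II$, and thus to a first-order perturbation of the canonical model edge metric $h_1 = dr^2 \oplus r^2 g^V \oplus \pi^* g^{Z_i}$. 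Theorem \ref{Tcorfac} then yields
\[
(2\pi)^k \chi(\widetilde{\hat{M}}) = \int_{\widetilde{\hat{M}}} \Pf^g - \sum_i \int_{Z_i}\Pf(g^{Z_i})\int_{F_i}\oPf(g^V) - \sum_i \int_{\partial_i \widetilde{\hat{M}}}\TPf(\nabla_1^h, \nabla_1^g),
\]
with fiber $F_i = S\nu\widetilde{Z}_i/G_i$.

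Next I would show that each transgression integral vanishes. The boundary-defining function, Ehresmann connection on $\nu\widetilde{Z}_i$, the endomorphism $\varphi$ and the resulting connections $\nabla_1^h, \nabla_1^g$ can all be chosen $G_i$-equivariantly on $\widetilde{M}_i$, so $\TPf(\nabla_1^h,\nabla_1^g)$ descends through the $|G_i|$-fold free cover $\partial \widetilde{M}_i \to \partial_i \widetilde{\hat{M}}$ and $\int_{\partial_i\widetilde{\hat{M}}}\TPf = \tfrac{1}{|G_i|}\int_{\partial \widetilde{M}_i}\TPf$. Upstairs $\widetilde{Z}_i$ is a genuine smooth submanifold of the smooth Riemannian manifold $M_i$, and the transgression is a local object on $\partial \widetilde{M}_i$. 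The argument of Example \ref{examp1} (Theorem \ref{Tcorfac} applied on $M_i$, combined with classical Gauss--Bonnet on $M_i$ and on $\widetilde{Z}_i$ and the topological identity $\chi(M_i \setminus \widetilde{Z}_i) = \chi(M_i) - \chi(\widetilde{Z}_i)$) forces the upstairs transgression to vanish, hence so does the downstairs one.

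Finally, Example \ref{Exemp1} and Remark \ref{nrem} give $\int_{S^{f_i}}\oPf = (2\pi)^{(f_i+1)/2}$ with $f_i = 2k-1-\dim Z_i$, so $\int_{F_i}\oPf = (2\pi)^{(f_i+1)/2}/|G_i|$. Combined with classical Gauss--Bonnet $\int_{Z_i}\Pf(g^{Z_i}) = (2\pi)^{m_i}\chi(Z_i)$ when $\dim Z_i = 2m_i$, the odd-Pfaffian term equals $(2\pi)^k \chi(Z_i)/|G_i|$; when $\dim Z_i$ is odd, both $\chi(Z_i)$ and $\Pf(g^{Z_i})$ vanish by convention. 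A Mayer--Vietoris argument, exploiting that each $\partial D_i^\epsilon$ is a closed odd-dimensional manifold of dimension $2k-1$ with vanishing Euler characteristic, gives $\chi(\widetilde{\hat{M}}) = \chi(\hat{M}\setminus Z) = \chi(\hat{M}) - \sum_i \chi(Z_i)$. Substituting everything into the identity above and dividing by $(2\pi)^k$ recovers \eqref{fineq}. The main obstacle is the vanishing of the transgression: one has to verify that the entire first-order perturbation framework (collar structure, $\varphi$, connections $\nabla_1^h, \nabla_1^g$, and the form $\TPf$) is $G_i$-equivariant on $\widetilde{M}_i$ so that $\TPf$ descends cleanly, and that the smooth-manifold vanishing on $M_i$ (which ultimately rests on the topological relation $\chi(M_i\setminus \widetilde{Z}_i) = \chi(M_i) - \chi(\widetilde{Z}_i)$) transfers faithfully to the orbifold blow-up.
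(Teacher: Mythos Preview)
Your approach is essentially the same as the paper's: both pass to the quotient sphere bundle $S(\nu\widetilde Z_i)/G_i$, use the free $|G_i|$-fold cover to relate the boundary contribution to the upstairs one, invoke Theorem~\ref{tvsv} to identify the blown-up metric as a first-order perturbation of the model edge metric, and close with the same Mayer--Vietoris relation $\chi(\hat M)=\chi(\hat M\setminus Z)+\sum_i\chi(Z_i)$.

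The one organizational difference is that you invoke Theorem~\ref{Tcorfac}, which splits the boundary term into the model odd-Pfaffian contribution plus the correction $\int_{\partial}\TPf(\nabla_1^h,\nabla_1^g)$, and then you must argue that the correction vanishes. The paper avoids this splitting: it works directly with the total slice transgression $\TPf^g|_{\{r\}\times N}$ (i.e.\ Theorem~\ref{trgl}(a)), pulls it back through the $|G_i|$-cover, and appeals to Example~\ref{examp1} to identify the upstairs limit with $(2\pi)^k\chi(\widetilde Z_i)$ in one stroke. Your route is equivalent but adds a step; note also that your vanishing argument, as stated, applies classical Gauss--Bonnet on $M_i$, whereas Definition~\ref{defRorb} does not require $M_i$ to be closed. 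Both your proposal and the paper's reference to Example~\ref{examp1} are tacitly using that the boundary contribution depends only on a tubular neighborhood of $\widetilde Z_i$, so one may either replace $M_i$ by the closed disk bundle $\overline{D}_\epsilon(\nu\widetilde Z_i)$ (compact with smooth boundary) and run the same comparison there, or embed the tube in a closed ambient manifold. You flag this as ``the main obstacle,'' which is accurate; the fix is the local argument just described.
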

\begin{proof}  Fix a connected component $Z\in \Fix(\hat{M})$ and let $D$ be the
neighborhood of $Z$ from Definition \ref{defRorb} such that $D\simeq M/\Gamma$.
Let $B:=\Fix_{\Gamma}(M)$. Since the action of $\Gamma$ on  $M$ is via
isometries in the induced action $\Gamma\times TM\ra TM$ via differentials, the
subset $S(\nuB )$ is invariant. 
Moreover, the action is free and linear in every fiber $S(\nu_b B)$.

Now let $\Gamma$ act trivially on $(-\epsilon,0]$. Then it is  straightforward
to see that
\begin{align*} 
\exp:(-\epsilon,0]\times S(\nuB )\ra M,&& (r,p,v)\ra \exp_p(rv)
\end{align*}
is a $\Gamma$-equivariant map since every isometry $g\in \Gamma$ will take a
geodesic with initial conditions $(p,v)$ to a geodesic with initial conditions
$(gp, d_pg(v))$.

It follows that we can find an (equivariant) tubular neighborhood for every
$Z_i\in \Fix(\hat{M})$ whose boundary is diffeomorphic to a quotient 
$N=S(\nuB_i)/\Gamma_i$. By Mayer -Vietoris, 
\begin{equation}\label{trueform} \chi(\hat{M})=\chi\left(\hat{M}\setminus
Z\right) +\sum_{Z_i\in \Fix(\hat{M})} \chi(Z_i)
\end{equation}

One
applies Gauss-Bonnet for manifolds with boundary in the complement of these
tubular neighborhoods in $\hat{M}$ and then passes to limit $r\ra 0$ in order to
obtain a formula for $\chi\left(\hat{M}\setminus Z\right)$. We
can therefore restrict our attention to what happens in the neighborhood $D$ with
the limits of the integrals of the transgression forms.

Recall now that the manifold $\tilde{M}:=(-\epsilon,0]\times S(\nuB )$ has a
model degenerate metric, left invariant by $\Gamma$ (it is obvious that $\Gamma$ leaves invariant the splitting
$TS(\nuB )=VS(\nuB )\oplus HS(\nuB )$).

Assume first that the exponential map
$\exp:D_{\epsilon}(\nuB )\ra M$ is an isometry onto its image. Then the induced
map:
\[ \exp/\Gamma:(D_{\epsilon}(\nuB )/\Gamma)\setminus{\{0\}}\ra
(M/\Gamma)\setminus B
\]
is an isometry onto its image where $\{0\}$ is the zero section of the disk
bundle $D(\nuB )$. 

Use Theorem \ref{Theorem1}, Examples \ref{examp1}  together with
(\ref{trueform}) in order to
conclude that formula \eqref{fineq} holds in this case since the fiber-integral 
equals the integral over the Riemannian manifold $S(\nu_b B)/\Gamma$ of the 
integrand that appears in \eqref{trsp}. That integrand is invariant under the
action of rotations and therefore descends to $S(\nu_b B)/\Gamma$. The result of
fiber integration is therefore $\frac{1}{|\Gamma|}$.

In the general case (without any assumption about the exponential map), by Theorem \ref{tvsv}, 
the degenerate metric $g$ on $(-\epsilon,0]\times S(\nuB )$ 
is a first-order perturbation of the degenerate model metric. It is easy to see
that the transgression form on the slice $\{r\}\times S(\nu B)/\Gamma$,
$r\neq0$ pulls-back to the transgression form induced by $g$ on $\{r\}\times
S(\nu B)$. Since $\Gamma$ acts freely, the map $\{r\}\times S(\nu
B)\ra\{r\}\times S(\nu B)/\Gamma$ is a covering with $|\Gamma|$ sheets.
Therefore in the limit $r\ra 0$, the integral that interests us amounts to
$\frac{1}{|\Gamma|}$ of the corresponding integral over $S(\nu B)$. But the latter
equals $\chi(B)$, by the concluding remarks of Example \ref{examp1}.
\end{proof}

\section{Applications}

\begin{cor}
Let $\hat{M}$ be a compact Riemannian orbifold with simple
singularities of dimension $2k$ and let $g$ be the Riemannian metric on
$\hat{M}\setminus Z$. Then $(2\pi)^{-k}\int_{\Int{\hat{M}}}\Pf^g$ is
rational.
\end{cor}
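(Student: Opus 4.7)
The plan is to read off the claim directly from Theorem \ref{thro}, which has just been established. That theorem expresses
\[
\chi(\hat{M})=\frac{1}{(2\pi)^k}\int_{\Int{\hat{M}}}\Pf^g + \sum_{Z_i\in \Fix(\hat{M})} \chi(Z_i)\frac{|G_i|-1}{|G_i|},
\]
so solving for the integral yields
\[
\frac{1}{(2\pi)^k}\int_{\Int{\hat{M}}}\Pf^g = \chi(\hat{M}) - \sum_{Z_i\in \Fix(\hat{M})} \chi(Z_i)\frac{|G_i|-1}{|G_i|}.
\]

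The first step is to observe that the left-hand side $\chi(\hat{M})$ is a (topological) Euler characteristic of a compact Hausdorff space, in fact an integer. The second step is to note that each $Z_i$ is, by Definition \ref{defRorb}, a compact submanifold of $\hat{M}$ (identified, via $h_i$, with the fixed locus $\Fix_{\Gamma_i}(M_i)$), so $\chi(Z_i)\in\mathbb{Z}$. Finally, each isotropy group $G_i$ is a finite group of positive order, hence $(|G_i|-1)/|G_i|\in\mathbb{Q}$. The sum on the right is finite since $\hat M$ is compact and the connected components of the singular locus form a finite collection under the simple-singularity assumption. Therefore the right-hand side is a finite $\mathbb{Q}$-linear combination of integers, and the conclusion follows.

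There is no obstacle to this argument beyond Theorem \ref{thro} itself; the rationality is simply a bookkeeping consequence. The only mild caveat is ensuring that $\Fix(\hat{M})$ is indeed finite under the hypotheses, which follows from compactness of $\hat M$ together with the fact that the singular strata $Z_i$ are locally modelled on fixed submanifolds in finite-group quotients and are therefore closed and locally finite in $\hat M$.
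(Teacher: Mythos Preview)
Your proof is correct and follows exactly the paper's approach: the paper simply states that the corollary ``follows immediately from Theorem \ref{thro},'' and you have merely spelled out the obvious details of that immediate deduction.
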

This follows immediately from theorem \ref{thro}. If the orbifold $\hat{M}$ is
the finite quotient of a closed smooth manifold $X$, one can obtain this result
from the Gauss-Bonnet formula on $X$, however such a $X$ does not exist in
general.

The Gauss-Bonnet formul{\ae} proved here imply some global obstructions for the 
existence of flat cobordisms with prescribed ends of fibered boundary- or
incomplete edge type. 
The simplest instance of such an obstruction arises for even-dimensional cones
modeled by quotients of the round sphere, for instance lens spaces. 
\begin{cor}
There do not exist flat metrics on a compact manifold with one cone singularity
modeled on 
$\Gamma\backslash S^{2k-1}$ for a nontrivial group of isometries $\Gamma$ acting
freely on the round sphere.
\end{cor}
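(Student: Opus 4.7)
The plan is to derive an arithmetic obstruction via the conical Gauss--Bonnet formula of Theorem \ref{Tconca}. Suppose, for contradiction, that such a flat Riemannian metric $g$ exists on $\bar{M}$, a compact space whose singular locus consists of a single cone point $p$ modeled on $N := \Gamma \backslash S^{2k-1}$ with $\Gamma$ a nontrivial finite group acting freely by isometries on the round sphere. Setting $M := \bar{M} \setminus \{p\}$, the geometry near $p$ is that of a geometric flat cone of inclination $1$, so in the notation of Section \ref{conman} we have $f(r)=-r$ and $h(r) \equiv h$ the induced quotient round metric on $N$.

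Since $g$ is flat, $\int_M \Pf^g = 0$. For the boundary transgression term, I would use that the quotient $\pi: S^{2k-1} \to N$ is a Riemannian covering of degree $|\Gamma|$, so $\pi^*\oPf(h) = \oPf(h_{\mathrm{round}})$. Combined with Remark \ref{nrem}, which computes $\int_{S^{2k-1}}\oPf(h_{\mathrm{round}})=(2\pi)^k$, this yields
\[
\int_N \oPf(h) = \frac{1}{|\Gamma|}\int_{S^{2k-1}} \oPf(h_{\mathrm{round}}) = \frac{(2\pi)^k}{|\Gamma|}.
\]
Substituting into \eqref{GBcs1} (with $-f'(0)=1$, so that $\TPf(N,h,-f'(0))=\oPf(h)$) gives $\chi(M) = -1/|\Gamma|$.

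To conclude, observe that $M$ deformation retracts onto the compact manifold with boundary $M_r$ for any $r\in(-\epsilon,0)$, hence $\chi(M)=\chi(M_r)\in\mathbb{Z}$. The equality $\chi(M)=-1/|\Gamma|$ therefore forces $|\Gamma|=1$, contradicting the nontriviality of $\Gamma$.

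The argument is essentially a direct piece of bookkeeping. The only subtlety, which I regard as the main point requiring care, is the orientation convention entering Remark \ref{nrem}: the sphere in the conical set-up is oriented with the \emph{inner} normal first (pointing toward the singularity), giving $+(2\pi)^k$ rather than $-(2\pi)^k$ for $\int_{S^{2k-1}}\oPf$. Once this sign is fixed consistently with the sign in \eqref{GBcs1}, the integrality obstruction drops out immediately.
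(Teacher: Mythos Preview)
Your proof is correct and follows essentially the same route as the paper: apply the conical Gauss--Bonnet formula \eqref{GBcs1}, use the covering $S^{2k-1}\to N$ together with Remark \ref{nrem} to evaluate the boundary term as $(2\pi)^k/|\Gamma|$, and conclude by integrality of $\chi(M)$. Your sign bookkeeping is in fact slightly more careful than the paper's (you obtain $\chi(M)=-1/|\Gamma|$, which is the correct sign consistent with the smooth case $|\Gamma|=1$), but of course this does not affect the conclusion.
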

\begin{proof}
When we remove a point from a smooth 
manifold $M$, the Euler characteristic decreases by $1$, and this is reflected
in the transgression form of Theorem \ref{Tconca} on the odd round sphere: the
integral of this local transgression form must equal $(2\pi)^k$ (Remark \ref{nrem}).
We deduce that on the quotient of $S^{2k-1}$ by a finite group of isometries
$\Gamma$ acting freely, this transgression form integrates to $(2\pi)^k/|\Gamma|$.
The Pfaffian form of a flat metric vanishes, hence $1/|\Gamma|=\chi(M)\in{\mathbb Z}$,
thus $\Gamma$ must be trivial.
\end{proof}

More generally, for edge metrics Theorems \ref{Theorem1} and \ref{trgl} imply
some restrictions for the existence of a flat manifold bounding an edge
singularity modeled on spherical fibrations:
\begin{cor}\label{eo}
Let $N\to B$ be a locally trivial fibration of closed manifolds with fiber type
$F$. 
If there exists a compact flat Riemannian 
manifold $(M,g)$ bounding $N$ endowed with a second-order perturbation
of a model edge metric \eqref{eq-3} where all the fibers have constant sectional curvature $1$, 
then the order of $\pi_1(F)$ must divide $\chi(B)$.
\end{cor}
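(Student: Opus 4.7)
The plan is to reduce the statement to a direct computation via the extended Chern–Gauss–Bonnet formula. Because the perturbation is of second order, Theorem \ref{trgl}(b) asserts that the formula of Theorem \ref{Theorem1} applies verbatim to $(M,g)$. Flatness of $g$ makes $\Pf^g \equiv 0$, which kills the bulk integral. If $b:=\dim B$ is odd, part (a) gives $\chi(M)=0$, while $\chi(B)=0$ automatically (closed odd-dimensional manifold), so the divisibility is trivial. Assume therefore that $b$ is even; set $f:=2k-1-b$, which is odd, and write $n:=(f+1)/2=k-b/2$. Part (b) then reduces to
\[
(2\pi)^k \chi(M) \;=\; -\int_B \Pf(g^B)\,\Bigl(\int_{\partial M/B}\oPf(g^V)\Bigr).
\]

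Next I would evaluate the fiber integral. The hypothesis that each fiber has constant sectional curvature $1$ makes it a complete manifold of constant positive curvature, hence by the Hopf–Killing theorem each fiber $F$ is isometric to a spherical space form $S^{f}/\pi_1(F)$ with $\pi_1(F)$ acting freely by isometries on the round sphere $S^f$. Since $\oPf$ is a universal polynomial with integral coefficients in the entries of the curvature and the metric, it is local and isometry-invariant, so $\oPf(g^V)$ on a fiber $F$ pulls back to $\oPf(\round)$ on $S^f$ under the $|\pi_1(F)|$-sheeted covering. Applying Remark \ref{nrem}, which gives $\int_{S^{2n-1}}\oPf(\round)=(2\pi)^n$, one obtains
\[
\int_F \oPf(g^V) \;=\; \frac{(2\pi)^{n}}{|\pi_1(F)|} \;=\; \frac{(2\pi)^{k-b/2}}{|\pi_1(F)|}.
\]

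Plugging back into the displayed formula and using the classical Chern–Gauss–Bonnet identity $\int_B \Pf(g^B)=(2\pi)^{b/2}\chi(B)$ for the closed even-dimensional manifold $B$, one finds
\[
(2\pi)^k \chi(M) \;=\; -\,(2\pi)^{b/2}\chi(B)\cdot \frac{(2\pi)^{k-b/2}}{|\pi_1(F)|} \;=\; -\,\frac{(2\pi)^k\,\chi(B)}{|\pi_1(F)|},
\]
so $\chi(M)=-\chi(B)/|\pi_1(F)|$. Since $\chi(M)\in\mathbb{Z}$, the order of $\pi_1(F)$ divides $\chi(B)$.

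The only non-routine point in the argument is the identification of the constant-curvature-$1$ fibers with genuine spherical space forms so as to recognize the deck group as the full fundamental group, which is the one place where the geometric hypothesis (rather than mere curvature vanishing) is used. Everything else is bookkeeping with formulas already established in the paper.
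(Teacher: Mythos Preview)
Your proof is correct and follows essentially the same approach as the paper's: invoke Theorem \ref{trgl}(b) to reduce to the model formula of Theorem \ref{Theorem1}, use flatness to kill the Pfaffian integral, identify the constant-curvature-$1$ fibers as spherical space forms so that the fiber integral of $\oPf$ is $(2\pi)^{(f+1)/2}/|\pi_1(F)|$ via Remark \ref{nrem}, and conclude by integrality of $\chi(M)$. Your write-up is in fact more careful than the paper's terse proof---you treat the odd-$b$ case explicitly and track the exponents precisely.
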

\begin{proof}
Each fiber is isometric to the quotient of the round sphere by the free
action of a finite group $\Gamma=\pi_1(F)$ of isometries of $S^{2f-1}$, hence
by Remark \ref{nrem} the
integral of the transgression form along each fiber is constant equal 
to $(2\pi)^f/|\Gamma|$. 

The conclusion follows from this remark and from the Gauss-Bonnet formula of 
Theorem \ref{Theorem1}, which by Theorem \ref{trgl} remains 
valid also for second-order perturbations of model edge singularities.
Of course, the Pfaffian term vanishes by the flatness assumption on $g$.
\end{proof}

Finally, exactly the same argument using Theorem \ref{th1.3} instead of \ref{Theorem1}
implies an obstruction for the existence of flat
manifolds with fibered boundary ends:
\begin{cor}
Assume that $(M,g)$ is a flat manifold which near the boundary $N$
is a second-order perturbation of a fibered
boundary metric modeled by a fibration $N\to B$, where $B$ is 
the quotient of the round sphere $S^{2b-1}$ by the free action of a finite group $\Gamma$
of isometries. Then the order of $\Gamma$ must divide $\chi(F)$.
\end{cor}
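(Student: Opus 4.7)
The plan is to mirror the strategy of the previous corollary, but using the fibered boundary Gauss-Bonnet formula (Theorem \ref{th1.3}) in place of the edge formula (Theorem \ref{Theorem1}). Since $\dim B=2b-1$ is odd, we are in case (b) of Theorem \ref{th1.3}, and by the extension to second-order perturbations established in Section \ref{Secaden}, we still have
\[
(2\pi)^k \chi(M)=\int_{M}\Pf^g+(2\pi)^{f/2}\chi(F)\int_{B}\oPf(g^B).
\]
The flatness assumption on $g$ kills the bulk term $\int_M\Pf^g$. A dimension count gives $2k=1+f+(2b-1)$, hence $f/2+b=k$, so the prefactor $(2\pi)^{f/2}$ combined with the integral over $B$ will yield a power of $(2\pi)^k$, matching the left-hand side.

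The next step is to evaluate $\int_B\oPf(g^B)$ where $g^B$ is the quotient of the round metric on $S^{2b-1}$ under the free, isometric action of $\Gamma$. Since $\oPf$ is a polynomial in the curvature tensor and the metric, it is an intrinsic invariant, hence $\Gamma$-invariant on $S^{2b-1}$ and descends to $B$. Because the projection $S^{2b-1}\to B$ is a $|\Gamma|$-sheeted covering,
\[
\int_B\oPf(g^B)=\frac{1}{|\Gamma|}\int_{S^{2b-1}}\oPf(\mathrm{round})=\frac{(2\pi)^b}{|\Gamma|},
\]
where the last equality uses the computation from Remark \ref{nrem} (equivalently, Example \ref{Exemp1}).

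Plugging this back into the Gauss-Bonnet formula and using $f/2+b=k$ yields
\[
(2\pi)^k\chi(M)=(2\pi)^{f/2}\chi(F)\cdot\frac{(2\pi)^b}{|\Gamma|}=\frac{(2\pi)^k\chi(F)}{|\Gamma|},
\]
so $\chi(M)=\chi(F)/|\Gamma|$. Integrality of $\chi(M)\in\mathbb{Z}$ forces $|\Gamma|$ to divide $\chi(F)$.

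The only subtle point, and therefore the main thing to justify carefully, is the sign and normalization in $\int_{S^{2b-1}}\oPf(\mathrm{round})=(2\pi)^b$: one must check that the orientation convention on $B$ used in Theorem \ref{th1.3}(b) is the one for which the round sphere (seen as the boundary-at-infinity of the fibered-boundary end, with $\partial_r$ pointing outwards) gives the positive sign $+(2\pi)^b$, as discussed in the corollary preceding Section \ref{Sec8}. Once this orientation check is done, the argument is immediate, and no further computations involving the perturbation $\alpha$ are needed, because Theorem \ref{th1.3} extends verbatim to second-order perturbations by the corollary of Section \ref{Secaden}.
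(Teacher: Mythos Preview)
Your proposal is correct and follows essentially the same approach as the paper, which simply states that the proof is identical to that of Corollary~\ref{eo} with Theorem~\ref{th1.3} replacing Theorem~\ref{Theorem1}. You have spelled out the details the paper leaves implicit---the dimension count $f/2+b=k$, the covering argument giving $\int_B\oPf(g^B)=(2\pi)^b/|\Gamma|$ via Remark~\ref{nrem}, and the orientation check---but the underlying argument is the same.
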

The proof is identical to that of Corollary \ref{eo}, applying Theorem \ref{th1.3} 
instead of \ref{Theorem1}.

\end{document}